\definecolor{dark-gray}{gray}{0.3}
\definecolor{dkgray}{rgb}{.4,.4,.4}
\definecolor{dkblue}{rgb}{0,0,.5}
\definecolor{medblue}{rgb}{0,0,.75}
\definecolor{rust}{rgb}{0.7,0.1,0.1}
\definecolor{drust}{rgb}{0.5,0.1,0.1}
\newtheorem{theorem}{Theorem}
\newtheorem{lemma}[theorem]{Lemma}
\newtheorem{corollary}[theorem]{Corollary}
\newtheorem{fact}[theorem]{Fact}
\newtheorem{definition}{Definition}
\newtheorem{proposition}[theorem]{Proposition}
\numberwithin{equation}{section}
\numberwithin{theorem}{section}
\renewcommand{\phi}{\varphi}
\newcommand{\out}{\mathrm{out}}
\DeclareFontFamily{OT1}{pzc}{}
\DeclareFontShape{OT1}{pzc}{m}{it}{<-> s * [1.200] pzcmi7t}{}
\DeclareMathAlphabet{\mathpzc}{OT1}{pzc}{m}{it}
\DeclareMathOperator{\dist}{dist}
\DeclareMathOperator{\dom}{dom}
\DeclareMathOperator{\gra}{gra}
\newcommand{\id}{\mathrm{Id}}
\newcommand{\inprod}[2]{\left\langle{#1},{#2}\right\rangle}
\newtheorem{assumption}{Assumption \!\!}
\newtheorem*{rep@theorem}{\rep@title}
\newcommand{\newreptheorem}[2]{%
	\newenvironment{rep#1}[1]{%
		\def\rep@title{#2 \ref{##1}}%
		\begin{rep@theorem}}%
		{\end{rep@theorem}}}
\theoremstyle{definition}
\newtheorem{remark}[theorem]{Remark}
\newtheorem{example}[theorem]{Example}
\newtheorem{estimator}[theorem]{Estimator}
\crefname{lemma}{lemma}{lemmas}
\title{Revisiting Inexact Fixed-Point Iterations for Min-Max Problems: \\Stochasticity and Structured Nonconvexity}
\author{Ahmet Alacaoglu\footnote{Department of Mathematics, University of British Columbia and Wisconsin Institute for Discovery, University of Wisconsin--Madison. \url{alacaoglu@math.ubc.ca}}\and Donghwan Kim\footnote{Department of Mathematical Sciences, KAIST. \url{donghwankim@kaist.ac.kr}} \and Stephen J. Wright\footnote{Department of Computer Sciences, University of Wisconsin--Madison. \url{swright@cs.wisc.edu}}}
\date{}
\begin{document}

\maketitle
\begin{abstract}
    We focus on constrained, $L$-smooth, potentially stochastic and nonconvex-nonconcave min-max problems either satisfying $\rho$-cohypomonotonicity or admitting a solution to the $\rho$-weakly Minty Variational Inequality (MVI), where larger values of the parameter $\rho>0$ correspond to a greater degree of nonconvexity. These problem classes include examples in two player reinforcement learning, interaction dominant min-max problems, and certain synthetic test problems on which classical min-max algorithms fail. It has been conjectured that first-order methods can tolerate a value of $\rho$ no larger than $\frac{1}{L}$, but  existing results in the literature have stagnated at the tighter requirement $\rho < \frac{1}{2L}$. With a simple argument, we obtain optimal or best-known complexity guarantees with cohypomonotonicity or weak MVI conditions for $\rho < \frac{1}{L}$. First main insight for the improvements in the convergence analyses is to harness the recently proposed \emph{conic nonexpansiveness} property of operators. Second, we provide a refined analysis for inexact Halpern iteration that relaxes the required inexactness level to improve some state-of-the-art complexity results even for  constrained stochastic convex-concave min-max problems. Third, we analyze a stochastic inexact Krasnosel'ski\u{\i}-Mann iteration with a multilevel Monte Carlo estimator when the assumptions only hold with respect to a solution.
\end{abstract}
\section{Introduction}\label{sec: intro}
We consider the problem
\begin{align}\label{eq: jut5}
    \min_{u\in U} \max_{v \in V} f(u, v),
\end{align}
where $U\subseteq\mathbb{R}^m, V\subseteq\mathbb{R}^n$ are closed convex sets admitting efficient projection operators and $f\colon \mathbb{R}^m\times\mathbb{R}^n\to\mathbb{R}$ is a function such that $\nabla_u f(u, v)$ and $\nabla_v f(u, v)$ are Lipschitz continuous. The general setting where $f(u, v)$ is allowed to be nonconvex-nonconcave is \emph{extremely} relevant in machine learning (ML), with  applications in generative adversarial networks (GANs) \citep{goodfellow2014generative} and adversarial ML \citep{madry2018towards}. Yet, at the same time, such problems are \emph{extremely} challenging to solve, with documented hardness results, see e.g., \cite{daskalakis2021complexity}. 
As a result, an extensive literature has arisen about special cases of the nonconvex-nonconcave problem \eqref{eq: jut5} for which algorithms with good convergence and complexity properties can be derived \citep{diakonikolas2021efficient,bauschke2021generalized,lee2021fast,pethick2022escaping,pethick2023solving,pethick2023stable,gorbunov2023convergence,bohm2022solving,cai2022accelerateda,cai2022acceleratedb,hajizadeh2023linear,kohlenbach2022proximal,anonymous2023semianchored,anonymous2023weaker,grimmer2023landscape,tran2023randomized}.

To describe these special cases of \eqref{eq: jut5},  we state the following \emph{nonmonotone} inclusion problem, which generalizes \eqref{eq: jut5}:
\begin{equation}\label{eq: sde4}
\text{Find~} x^\star\in\mathbb{R}^d \text{~such that~} 0 \in F(x^\star) + G(x^\star),
\end{equation}
where $F\colon \mathbb{R}^d \to \mathbb{R}^d$ is $L$-Lipschitz and $G\colon\mathbb{R}^d\rightrightarrows\mathbb{R}^d$ is maximally monotone. Mapping this problem to finding stationary points of \eqref{eq: jut5} is standard by setting $x=\binom{u}{v}$, $F(x) = \binom{\nabla_u f(u, v)}{-\nabla_v f(u,v)}$ and $G(x)=\binom{\partial \iota_U}{\partial \iota_V}$, where $\iota_U$ is the indicator function for set $U$. The nonmonotonicity in problem \eqref{eq: sde4} is due to nonconvex-nonconcavity of  problem \eqref{eq: jut5}.

The main additional assumption we make is that $F+G$ is \emph{$\rho$-cohypomonotone}. Recalling the standard definition $\gra (F+G)=\{(x, u) \in \mathbb{R}^d\times \mathbb{R}^d\colon~ u \in (F+G)(x)\}$, $\rho$-cohypomonotonicity is defined as
\begin{equation}\label{eq: asp_cohypo}
\begin{aligned}
\langle u - v, x-y \rangle \geq -\rho \| u-v\|^2~~~ \forall (x, u) \in \gra (F+G) \text{~and~} \forall (y, v) \in \gra (F+G),
\end{aligned}
\end{equation}
for $\rho > 0$, see \citep[Def. 2.4]{bauschke2021generalized}. When \eqref{eq: asp_cohypo} holds only for $y=x^\star$, it is also called the \emph{weak MVI condition} or \emph{$\rho$-star-cohypomonotonicity}, due to \citep{diakonikolas2021efficient}. For $\rho > 0$, the weak MVI condition requires the existence of a solution $x^\star$ to the $\rho$-weakly MVI:
\begin{equation}\label{eq: asp_mvi}
\langle u, x-x^\star \rangle \geq -\rho \| u\|^2 ~~~ \forall (x, u) \in \gra (F+G).
\end{equation}
For standard \emph{monotone operators} (corresponding to convex-concave instances of \eqref{eq: jut5}), the inner product  in \eqref{eq: asp_cohypo} is lower bounded by $0$. The assumption \eqref{eq: asp_cohypo} allows the right-hand side to be negative, allowing \emph{nonmonotonicity of $F+G$} or \emph{nonconvex-nonconcavity of $f(u, v)$}, while the limit of nonmonotonicity is determined by $\rho > 0$. These two assumptions, cohypomonotonicity or weak MVI, are required in the extensive literature cited above.

As the first contribution of this paper, we  extend the range of $\rho$, doubling the upper limit of $\frac{1}{2L}$ considered in the previous works, thus allowing a wider range of nonconvex problems of the form \eqref{eq: jut5} to be solved by first-order algorithms, while ensuring optimal or best-known complexity guarantees.

\textbf{Motivation. } Cohypomonotonicity and weak MVI conditions, defined in \eqref{eq: asp_cohypo} and \eqref{eq: asp_mvi}, allowed progress to be made in understanding the behavior of first-order algorithms for structured nonconvex-nonconcave problems, in a wide variety of works cited at the end of first paragraph.
On the one hand, these assumptions are not as general as one might desire: They have not been shown to hold for problems arising in generative or adversarial ML. 
On the other hand, they have been proven to hold for other relevant problems in ML. 

Examples where cohypomonotonicity holds include the \emph{interaction dominant min-max problems} (\Cref{ex: interaction}) and some stylized worst-case nonconvex-nonconcave instances \citep{hsieh2021limits,pethick2023stable} (see also \citep[Sections 5, 6]{bauschke2021generalized}).
The relaxed assumption of having a weak MVI solution is implied by star (and quasi-strong) monotonicity \citep{loizou2021stochastic} or existence of a solution to MVI \citep{dang2015convergence}, the latter being relevant in the context of policy gradient algorithms for reinforcement learning (RL) \citep{lan2023policy}. 
Weak MVI condition is satisfied in the context of an RL problem described in \Cref{ex: ratio_game}.
\begin{example}\label{ex: interaction}
\emph{Interaction dominant min-max problems \citep{grimmer2023landscape}: } 
We say that $f$ in \eqref{eq: jut5} is
$\alpha(r)$-interaction dominant if it satisfies for all $z=\binom{u}{v}\in\mathbb{R}^{n+m}$ that
\begin{align*}
    \nabla_{uu}^2 f(z) + \nabla^2_{uv} f(z)(r^{-1}\id - \nabla^2_{vv}f(z))^{-1}\nabla^2_{vu} f(z) &\succeq \alpha(r) \id,\\
    -\nabla_{vv}^2 f(z) + \nabla^2_{vu} f(z)(r^{-1}\id + \nabla^2_{uu}f(z))^{-1}\nabla^2_{uv} f(z) &\succeq \alpha(r) \id.
\end{align*}
\emph{Interaction} is captured by the second terms on the left-hand side of each condition.
The problem is called (nonnegative) \emph{interaction dominant} if these terms \emph{dominate} the smallest eigenvalue of $\nabla^2_{uu}f$ and largest eigenvalue of $\nabla_{vv}^2f$, i.e., $\alpha(r)\ge0$.
This is equivalent to the $r$-cohypomonotonicity of $F$ \citep[Proposition 1]{hajizadeh2023linear}.
$\hfill\blacklozenge$
\end{example}
\begin{example}\label{ex: ratio_game}
    \emph{Instances of von Neumann's ratio game:} This is a simple two player stochastic game \citep{neumann1945model,daskalakis2020independent,diakonikolas2021efficient}. Using the standard definition of the simplex $\Delta^d = \{ x\in\mathbb{R}^d\colon, x\geq 0, \sum_{i=1}^d x_i = 1 \}$, the problem is
    \begin{align*}
    \min_{x\in \Delta^m} \max_{y\in \Delta^n} \frac{\langle x, Ry \rangle}{\langle x, Sy\rangle},     
    \end{align*}
    where $R\in\mathbb{R}^{m\times n}$, $S\in\mathbb{R}_+^{m\times n}$ and $\langle x, Sy \rangle > 0 ~\forall(x,y)\in \Delta^m\times \Delta^n$.
    As described in   \cite{diakonikolas2021efficient}, it is easy to construct instances of this problem where it satisfies $\rho$-weakly MVI condition, but not cohypomonotonicity.$\hfill\blacklozenge$
\end{example}
The limit for the parameter $\rho$ in \eqref{eq: asp_cohypo} and \eqref{eq: asp_mvi} for which convergence first-order complexity results are proven seems to have stagnated at $\rho < \frac{1}{2L}$.
Two exceptions exist for a special case of our setting when $G\equiv 0$, which corresponds in view of \eqref{eq: jut5} to an unconstrained problem. First is the recent work \citep{anonymous2023weaker} that claimed to improve the limit of $\rho$ for weak MVI to $\approx \frac{0.63}{L}$ with a rather complicated analysis. The rate obtained is also suboptimal under cohypomonotonicity. This work conjectured (but did not prove) $\frac{1}{L}$ as the maximum limit for $\rho$ and also did not provide any algorithm achieving this. For an unconstrained cohypomonotone problem, \citep[Corollary~4.5]{cai2023variance} also showed possibility of obtaining guarantees with $\rho < \frac{1}{\sqrt{2}L}\approx\frac{0.7}{L}$.
Relevant citations and discussions appear in Table \ref{table:1} and \Cref{sec: app_relwork}.

\textbf{First-order oracles. } As standard in the operator splitting literature (see e.g., \cite{bauschke2017convex}), a \emph{first-order oracle call} for  \eqref{eq: sde4} consists of one evaluation of $F$ and one \emph{resolvent} of $G$ (see \eqref{eq: resolvent_def}). In the context of the min-max problem \eqref{eq: jut5}, this requires computation of gradients $\nabla_u f(u, v), \nabla_v f(u, v)$ together with projections on sets $U, V$. 
(All works in Table~\ref{table:1} have the same oracle access.) See \Cref{asp:4} for the oracles in the stochastic case.

\begin{table*}[t]
\centering
\begin{tabular}{ l|l|c c c } 
\hline
 Assumption & Reference  & \makecell{Upper bound \\ of $\rho$} &Constraints& \makecell{Oracle\\ complexity} \\
\hline
cohypomonotone &\cite{cai2022acceleratedb}&  $\frac{1}{60L}$ & $\checkmark$ &$O(\varepsilon^{-1})$ \\[2mm]\cline{2-2}
&\makecell[l]{ \cite{cai2022accelerateda}, \cite{pethick2023stable} \\
\cite{lee2021fast}, \cite{tran2023sublinear}\\\cite{gorbunov2023convergence} } &  $\frac{1}{2L}$ &$\checkmark$ & $O(\varepsilon^{-1})$ \\ [2mm]\cline{2-2}
& \cite{cai2023variance} &  $\frac{0.7}{L}$ & $\times$ &$\widetilde{O}(\varepsilon^{-1})$ \\ [2mm]
 & Theorem \ref{th: cohypo_det} & $\frac{1}{L}$ &$\checkmark$ &$\widetilde{O}(\varepsilon^{-1})$ \\[.5mm]
\hline
weak MVI &\cite{diakonikolas2021efficient}$^\ddagger$ &  $\frac{1}{8L}$ & $\times$ &$O(\varepsilon^{-2})$ \\[2mm] 
&\cite{bohm2022solving}$^\ddagger$ &  $\frac{1}{2L}$ & $\times$ &$O(\varepsilon^{-2})$ \\[2mm]
&\cite{cai2022acceleratedb}&  $\frac{1}{12\sqrt{3}L}$ &$\checkmark$& $O(\varepsilon^{-2})$ \\[2mm]
& \cite{anonymous2023semianchored}$^\ddagger$ &  $\frac{1}{3L}$ &$\checkmark$ & $\widetilde{O}(\varepsilon^{-2})$ \\ [2mm]
& \cite{pethick2022escaping} &  $\frac{1}{2L}$ &$\checkmark$ & $O(\varepsilon^{-2})$ \\ [2mm]
& \cite{anonymous2023weaker} &  $\frac{0.63}{L}$ & $\times$ &$O(\varepsilon^{-2})$ \\ [2mm]
 & Theorem \ref{th: weakmvi_det} & $\frac{1}{L}$ &$\checkmark$  &$\widetilde{O}(\varepsilon^{-2})$ \\ [.5mm]
\hline
\end{tabular}
\caption{\small Comparison of first-order algorithms for deterministic problems. Complexity refers to the number of oracle calls to get $\text{dist}(0, (F+G)(x))\leq\varepsilon$. See also Remark \ref{rem: opt_meas}. $^\ddagger$These works defined weak MVI as $\langle F(x), x-x^\star \rangle \geq -\frac{\gamma}{2}\|F(x)\|^2$, i.e., $\gamma=2\rho$.}
\label{table:1}
\end{table*}
 
\textbf{Contributions. } We show how to increase the range of the cohypomonotonicity parameter  to $\rho < \frac{1}{L}$ while maintaining first-order oracle complexity $\widetilde{O}(\varepsilon^{-1})$ for finding a point $x$ such that $\dist(0, (F+G)(x)) \leq \varepsilon$, in \Cref{sec: halpern}. 
Such a complexity is optimal (up to a log factor) even for monotone problems \citep[Section~3]{yoon2021accelerated}. 
In \Cref{sec: weak_mvi}, with weak MVI and the improved range of $\rho < \frac{1}{L}$, we show complexity $\widetilde{O}(\varepsilon^{-2})$ for $\dist(0, (F+G)(x)) \leq \varepsilon$ which is the best-known (up to a log factor) under this assumption. 
Table~\ref{table:1} summarizes known results on complexity and the upper bound of $\rho$. 

Thanks to the modularity of our approach, we extend our results to the stochastic case where $F$ is accessed via unbiased oracles $\widetilde{F}(\cdot)$ (that is, $\mathbb{E}[\widetilde{F}(x)] = F(x)$). 
These extensions require the development of further tools for stochastic min-max problems. First, in \Cref{sec:olc1}, we  tighten the analysis of Halpern iteration with inexact resolvent computations. This leads to improvements for the existing complexities even for some classes of convex-concave problems, see \Cref{sec: main_stoc_cohypo}. 
Second, to obtain the best-known complexity for stochastic problems under weak MVI, we incorporate the multilevel Monte Carlo estimator to KM iteration to control the bias in subproblem solutions, see \Cref{sec: main_stoc_wmi}.

\subsection{Preliminaries}\label{sec: prelim}
\paragraph{Notation. } We denote the $\ell_2$ norm as $\|\cdot\|$. Given $G\colon\mathbb{R}^d\rightrightarrows\mathbb{R}^d$, we use standard definitions $\gra G=\{(x, u) \in \mathbb{R}^d\times \mathbb{R}^d\colon u \in G(x)\}$ and $\dist(0, G(x)) = \min_{u \in G(x)}\|u\|$. Domain of an operator is defined as $\dom G = \{ x\in\mathbb{R}^d\colon G(x)\neq \emptyset \}$.
The operator $G$ is \emph{maximally} monotone (resp. cohypomonotone or hypomonotone) if its graph is not strictly contained in the graph of any other monotone (resp. cohypomonotone or hypomonotone) operator.

An operator $F\colon \mathbb{R}^d\rightrightarrows\mathbb{R}^d$, given $(x, u)\in\gra F$ and $(y, v)\in\gra F$, is \textbf{(i)} \emph{$\gamma$-strongly monotone} if $\langle u-v,x-y \rangle \geq \gamma \| x-y\|^2$ with $\gamma>0$ and \emph{monotone} if the inequality holds with $\gamma=0$; {\textbf{(ii)}} \emph{$\rho$-hypomonotone} if $\langle u-v,x-y \rangle \geq -\rho\|x-y\|^2$ with $\rho>0$. An operator  $F\colon\mathbb{R}^d\to\mathbb{R}^d$ is {\textbf{(iii)}} \emph{$L$-Lipschitz} if $\|F(x)-F(y)\|\leq L\|x-y\|$; {\textbf{(iv)}} \emph{nonexpansive} if $F$ is $1$-Lipschitz; \textbf{(v)} \emph{$\gamma$-cocoercive} if $\langle F(x)-F(y),x-y \rangle \geq \gamma \|F(x)-F(y)\|^2$ with $\gamma >0$. We refer to \emph{star} variants of these properties (e.g., \emph{star-cocoercive}) when they are required only at $(y,v)=(x^\star, 0)$ where $0 \in F(x^\star)$. Since it is a standard notion, we use quasi-nonexpansive instead of star-nonexpansive.

The {\em resolvent} of an operator $F\colon \mathbb{R}^d\rightrightarrows\mathbb{R}^d$ is defined as
\begin{equation}\label{eq: resolvent_def}
    J_{F} =(\id+F)^{-1}.
\end{equation}
The resolvent generalizes the well-known \emph{proximal operator} that has been ubiquitous in optimization and ML, where $F$ is typically the subdifferential of a regularizer function, e.g., $\ell_1$ norm.
Favorable properties of the resolvent are well-known when $F$ is monotone \citep{bauschke2017convex}. 
Meanwhile, in our nonmonotone case, immense care must be taken in utilizing this object, as it might even be undefined. 
A comprehensive reference for the properties of  resolvent of a nonmonotone operator is  \citep{bauschke2021generalized}. 
We review and explain the results relevant to our work in the sequel.

The algorithms we analyze are based on the classical Halpern \citep{halpern1967fixed} and Krasnosel'ski\u{\i}-Mann (KM) \citep{krasnosel1955two,mann1953mean} iterations. Given an operator $T\colon\mathbb{R}^d\to\mathbb{R}^d$, Halpern iteration is defined as
\begin{equation}\label{eq: halpern_iter}
    x_{k+1} = \beta_k x_0 + (1-\beta_k) T(x_k),
\end{equation}
for a decreasing sequence $\{ \beta_k\}\in (0, 1)$ and initial point $x_0$. 
The KM iteration, with a fixed $\beta\in(0, 1)$, is defined as
\begin{equation}\label{eq: km_iter}
    x_{k+1} = \beta x_k + (1-\beta) T(x_k).
\end{equation}
The algorithms we analyze are based on the classical Halpern \citep{halpern1967fixed} and Krasnosel'ski\u{\i}-Mann (KM) \citep{krasnosel1955two,mann1953mean} iterations. Given an operator $T\colon\mathbb{R}^d\to\mathbb{R}^d$, Halpern iteration is defined as
\begin{equation}\label{eq: halpern_iter}
    x_{k+1} = \beta_k x_0 + (1-\beta_k) T(x_k),
\end{equation}
for a decreasing sequence $\{ \beta_k\}\in (0, 1)$ and initial point $x_0$. 
The KM iteration, with a fixed $\beta\in(0, 1)$, is defined as
\begin{equation}\label{eq: km_iter}
    x_{k+1} = \beta x_k + (1-\beta) T(x_k).
\end{equation}
\textbf{Conic nonexpansiveness. } The key to relaxing the range of $\rho$ parameter for both assumptions is to harness the algorithmic consequences of \emph{conic nonexpansiveness}, the notion introduced by the influential work of \citet{bauschke2021generalized} that also inspired our developments. We say that $T\colon\mathbb{R}^d \to \mathbb{R}^d$ is $\lambda$-conically nonexpansive with $\lambda >0$ when there exists a nonexpansive operator $N\colon\mathbb{R}^d \to \mathbb{R}^d$ such that $T=(1-\lambda) \id + \lambda N$, see \citep[Def. 3.1]{bauschke2021generalized}. This equivalently means that a particular combination of $\id$ and $T$ is nonexpansive: $\|((1-\lambda^{-1})\id+\lambda^{-1}T)(x-y)\|\leq \|x-y\|$. An important characterization of this property given in \citep[Cor. 3.5(iii)]{bauschke2021generalized} is that $T$ is $\lambda$-conically nonexpansive if and only if $\text{Id}-T$ is $\frac{1}{2\lambda}$-cocoercive.
We also consider the \emph{star} variants (in the sense defined in the Notation paragraph) of these properties  and characterizations,
which are detailed in Appendix \ref{subsec: conic_star}.
\begin{assumption}\label{asp:1}
    The operator $F\colon\mathbb{R}^d\to \mathbb{R}^d$ is $L$-Lipschitz and $G\colon\mathbb{R}^d \rightrightarrows \mathbb{R}^d$ is maximally monotone. The solution set for the problem \eqref{eq: sde4} is nonempty.
\end{assumption}
Assumption \ref{asp:1} is standard, see \cite{facchinei2003finite}, and is required throughout the text. Monotonicity is not assumed for $F$. Lipschitzness of $F$ corresponds to smoothness of $f$ in context of \eqref{eq: jut5} and maximal monotonicity of $G$ is satisfied when we have constraint sets given in \eqref{eq: jut5} but also when we have convex regularizers added on \eqref{eq: jut5} (e.g., $\|\cdot\|_1$).

\begin{assumption}\label{asp:2}
    The operator $F+G$ is maximally $\rho$-cohypomonotone (see \eqref{eq: asp_cohypo} for the definition).
\end{assumption}
Assumption \ref{asp:2} is abundant in the recent literature for nonconvex-nonconcave optimization \citep{lee2021fast,bauschke2021generalized,cai2022accelerateda,cai2022acceleratedb,gorbunov2023convergence,pethick2023stable}. An instance is provided in Example \ref{ex: interaction} with further pointers to related problems given in \Cref{sec: intro}. Assumption \ref{asp:2} is required only for the results in Sections \ref{sec: halpern} and \ref{sec: main_stoc_cohypo}.
\begin{assumption}\label{asp:3}
    There exists a nonempty subset of solutions of \eqref{eq: sde4} whose elements satisfy \eqref{eq: asp_mvi}.
\end{assumption}
Assumption \ref{asp:3} is weaker than Assumption \ref{asp:2} as it is only required with respect to a solution, see also Example \ref{ex: ratio_game}. Assumption \ref{asp:3}, used in Sections \ref{sec: weak_mvi} and \ref{sec: main_stoc_wmi}, is also widespread in the recent literature for nonconvex-nonconcave optimization \citep{diakonikolas2021efficient,pethick2022escaping,pethick2023solving,cai2022accelerateda,anonymous2023semianchored,anonymous2023weaker,bohm2022solving}.

\section{Algorithm and Analysis under Cohypomonotonicity}\label{sec: halpern}
\begin{algorithm*}[h]
\caption{Inexact Halpern iteration for problems with cohypomonotonicity}
\begin{algorithmic}
    \STATE {\bfseries Input:} Parameters $\beta_k=\frac{1}{k+2}, \eta>0, L, \rho$, $\alpha=1-\frac{\rho}{\eta}$, $K\geq 1$, initial iterate $x_0\in\mathbb{R}^d$,
    subroutine $\texttt{FBF}$ in Algorithm \ref{alg:fbf} \\
    \vspace{.2cm}
    \FOR{$k = 0, 1, 2,\ldots, K-1$}
        \STATE $\widetilde{J}_{\eta(F+G)}(x_k) = \texttt{FBF}\left(x_k, N_k, \eta G, \id + \eta F,1+\eta L\right)$ where $N_k=\left\lceil\frac{4(1+\eta L)}{1-\eta L}\log(98\sqrt{k+2}\log(k+2))\right\rceil$
        \STATE $x_{k+1} = \beta_k x_0 + (1-\beta_k)((1-\alpha)x_k + \alpha\widetilde{J}_{\eta(F+G)}(x_k))$
        \ENDFOR
      \end{algorithmic}
\label{alg:cohypo}
\end{algorithm*}

\begin{algorithm}[h]
\caption{$\texttt{FBF}(z_0, N, A, B_{\mathrm{in}}, L_B)$ from \citep{tseng2000modified}}
\begin{algorithmic}
    \STATE {\bfseries Input:} Parameter $\tau=\frac{1}{2L_B}$, initial iterate $z_0\in\mathbb{R}^d$, $B(\cdot)=B_{\mathrm{in}}(\cdot)-z_0$ \\
    \vspace{.2cm}
    \FOR{$t = 0, 1, 2,\ldots, N-1 $}
        \STATE $z_{t+1/2} = J_{\tau A}(z_t - \tau B(z_t)) $ 
        \STATE $z_{t+1} = z_{t+1/2} +\tau B(z_t) - \tau B(z_{t+1/2})$
        \ENDFOR
      \end{algorithmic}
\label{alg:fbf}
\end{algorithm}
\subsection{Algorithm Construction and Analysis Ideas}\label{subsec: high_level_cohypo}

Recall the definitions of resolvent \eqref{eq: resolvent_def} and cohypomonotonicity \eqref{eq: asp_cohypo}.
We sketch the algorithmic construction and analysis ideas which will be expanded on in \Cref{sec:analysis2}.
\begin{enumerate}[(I)]
\item We know that Halpern iteration in \eqref{eq: halpern_iter} with $\beta_k = \frac{1}{k+2}$ has optimal rate when $T$ is nonexpansive, see \citep{sabach2017first,lieder2021convergence,kim2021accelerated}. That is, one gets $\|x_k - T(x_k)\|\leq\varepsilon$ with $O(\varepsilon^{-1})$ evaluations of $T$.\label{item: halpern_rec1}
\item When $F+G$ is maximally $\rho$-cohypomonotone (per Assumption \ref{asp:2}), we know from \cite{bauschke2021generalized} (with precise pointers in \Cref{fact: cohypo}) that $J_{\eta(F+G)}$ is $\frac{1}{2\alpha}$-conically nonexpansive where $\alpha = 1-\frac{\rho}{\eta}$, its domain is $\mathbb{R}^d$ and it is single-valued when $\frac{\rho}{\eta} < 1$.
Consequently, $T=(1-\alpha)\id+\alpha J_{\eta(F+G)}$ is \emph{firmly} nonexpansive (see \Cref{fact: cohypo}). Then, one can use the result in \eqref{item: halpern_rec1}.\label{item: halpern_rec2}

We next see a high level discussion on the approximate computation of $J_{\eta(F+G)}$.
\item Since $F$ is $L$-Lipschitz, we have that $F$ is $L$-hypomonotone by Cauchy-Schwarz inequality, i.e., 
\begin{equation*}
    \langle F(x) - F(y), x-y \rangle \geq -L \| x-y\|^2.
\end{equation*}
Hence, $\id + \eta F$ is $(1-\eta L)$-strongly monotone.

By definition, we have $x_k^\star = J_{\eta(F+G)}(x_k) = (\id +\eta(F+G))^{-1}(x_k)$. Existence and uniqueness of $x_k^\star$ is guaranteed by \eqref{item: halpern_rec2} when $\rho < \eta$ (see \Cref{fact: cohypo}). By definition, $x_k^\star$ is the solution of the problem
\begin{equation*}
    0 \in (\id+\eta(F+G))(x_k^\star) - x_k.
\end{equation*}
Hence, computation of the resolvent is a strongly monotone inclusion problem where $\id+\eta F$ is $(1-\eta L)$-strongly monotone and $(\eta L+1)$-Lipschitz, and $G$ is maximally monotone. In view of \eqref{eq: jut5} this also corresponds to a strongly convex-strongly concave problem.\label{item: halpern_rec3}

\item Any optimal algorithm for monotone inclusions, such as forward-backward-forward (FBF) \cite{tseng2000modified}, 
gives $\hat x_k$ with
$\|\hat x_k - J_{\eta(F+G)}(x_k)\|^2 \leq \varepsilon_k^2$ with complexity $\widetilde{O}\left(\frac{1+\eta L}{1-\eta L}\right)$.
\label{item: halpern_rec4}
\end{enumerate}
In summary, our requirements are $\frac{\rho}{\eta} < 1$ for ensuring well-definedness of the resolvent, as per \eqref{item: halpern_rec2}, and $1-\eta L > 0$ for ensuring strong monotonicity for efficient approximation of the resolvent, as per \eqref{item: halpern_rec3}. 
Hence, we need $\rho < \eta < \frac{1}{L}$, leading to the claimed improved range on $\rho$.

Item \eqref{item: halpern_rec2} refers to the resolvent of $\eta(F+G)$, which cannot be evaluated exactly in general with standard first-order oracles. 
We approximate $J_{\eta(F+G)}$, which leads to the inexact Halpern iteration, similar to \cite{diakonikolas2021efficient,cai2023variance}. 
Note that in the context of problem \eqref{eq: jut5}, approximating the resolvent corresponds to computing approximation of \emph{proximal operator} for function $f$ which is a strongly convex-strongly concave min-max problem.

In the next section, by extending the arguments in \citep[Lemma~12]{diakonikolas2020halpern} and \citep[Lemma~C.3]{cai2023variance} to accommodate conic nonexpansiveness, we show that $\eta^{-1}\|x_k - J_{\eta(F+G)}(x_k)\|\leq\varepsilon$, where the number of (outer) Halpern iterations is $O\left(\frac{\|x_0 - x^\star\|}{(\eta-\rho)\varepsilon}\right)$, when we approximate the resolvent to an accuracy of $\text{poly}\left(\frac{1}{k}\right)$.
To achieve this, we can run a subsolver as per \eqref{item: halpern_rec4}, with $\widetilde{O}\left(\frac{1+\eta L}{1-\eta L}\right)$ calls to evaluations of $F$ and resolvents of $G$.
By combining the complexities at outer and inner levels, we obtain the optimal first-order complexity under $\rho < \frac{1}{L}$.

\textbf{Discussion.} From the construction \eqref{item: halpern_rec1}-\eqref{item: halpern_rec4}, we see that the ingredients of our approach are based on known results. 
This raises the question: {\em what insight makes it possible to go beyond the $\rho < \frac{1}{2L}$ barrier?} 
The key is \emph{conic nonexpansiveness}, the critical notion introduced by \citet{bauschke2021generalized}. 
In particular, previous results on first-order complexity for nonmonotone problems
(including \cite{pethick2023stable} who utilized a similar algorithmic construction based on KM as ours in \Cref{sec: weak_mvi}) used \emph{nonexpansiveness} of the resolvent, which asks for the stringent requirement $\rho \leq \frac{\eta}{2}< \frac{1}{2L}$. This allows Halpern or KM iteration to be analyzed in a standard way. 

Our main starting insight is that, from the viewpoint of the analysis of Halpern iteration, we do not necessarily need nonexpansiveness of $J_{\eta(F+G)}$. We can apply the Halpern iteration to the operator $T=(1-\alpha)\id+\alpha J_{\eta(F+G)}$ where $\alpha = 1-\frac{\rho}{\eta}$, which is \emph{firmly nonexpansive} for $\rho < \eta$ (see \Cref{fact: cohypo}). 
Hence, as long as $\rho < \eta$,  Halpern iteration can be analyzed with $\rho < \eta < \frac{1}{L}$, at essentially no cost. We see later how \emph{firm nonexpansiveness} is essential for improving the inexactness criterion in approximating the resolvent.

\subsection{Analysis}  \label{sec:analysis2}

We now analyze the construction described in the previous section, given as Algorithm \ref{alg:cohypo}. We start with the main result, see \Cref{subsubsec: halpern_total} and \Cref{sec: total_compl_app} for its proof.
\begin{theorem}\label{th: cohypo_det}
    Let Assumptions \ref{asp:1} and \ref{asp:2} hold. Let $\eta < \frac{1}{L}$ in Algorithm \ref{alg:cohypo} and suppose $\rho < \eta$. 
    For any $k=1,\dotsc,K$, we have that $(x_k)$ from Algorithm \ref{alg:cohypo} satisfies
    \begin{equation*}
        \frac{1}{\eta^2}\|x_k - J_{\eta(F+G)}(x_k)\|^2 \leq \frac{16\|x_0-x^\star\|^2}{(\eta-\rho)^2 (k+1)^2}.
    \end{equation*}
    The number of first-order oracles used at iteration $k$ is upper bounded by $2N_k$ where $N_k$ is defined in \Cref{alg:cohypo}.
\end{theorem}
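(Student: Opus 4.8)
The plan is to recast Algorithm \ref{alg:cohypo} as an \emph{inexact Halpern iteration for a firmly nonexpansive operator}, to control the inner (resolvent) errors via the linear convergence of \texttt{FBF} on a strongly monotone inclusion, and then to combine an inexact-Halpern rate with a short self-referential bookkeeping argument. First I set $T=(1-\alpha)\id+\alpha J_{\eta(F+G)}$ with $\alpha=1-\tfrac{\rho}{\eta}=\tfrac{\eta-\rho}{\eta}\in(0,1)$, using $\rho<\eta$. Since $\tfrac{\rho}{\eta}<1$, \Cref{fact: cohypo} (from \citep{bauschke2021generalized}) gives that $J_{\eta(F+G)}$ is single-valued with full domain and $\tfrac1{2\alpha}$-conically nonexpansive, hence $T=\tfrac12\id+\tfrac12N$ for a nonexpansive $N$, i.e. $T$ is firmly nonexpansive; moreover any solution $x^\star$ of \eqref{eq: sde4} is a fixed point of $J_{\eta(F+G)}$, hence of $T$. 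Since $x_k-T(x_k)=\alpha\bigl(x_k-J_{\eta(F+G)}(x_k)\bigr)$ and therefore $\tfrac1{\eta^2}\|x_k-J_{\eta(F+G)}(x_k)\|^2=\tfrac1{(\eta-\rho)^2}\|x_k-T(x_k)\|^2$, the claimed bound is equivalent to $\|r_k\|\le\tfrac{4\|x_0-x^\star\|}{k+1}$ where $r_k:=x_k-T(x_k)$, which is what I would prove.

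Next, the update in Algorithm \ref{alg:cohypo} is exactly the perturbed Halpern iteration $x_{k+1}=\beta_k x_0+(1-\beta_k)\bigl(T(x_k)+\delta_k\bigr)$ with $\delta_k=\alpha\bigl(\widetilde J_{\eta(F+G)}(x_k)-J_{\eta(F+G)}(x_k)\bigr)$. Extending the potential-function proof of the exact Halpern rate — as in \citep[Lem.~12]{diakonikolas2020halpern} and \citep[Lem.~C.3]{cai2023variance} — from nonexpansive to conically/firmly nonexpansive $T$, I would obtain a bound of the form
\[
\|r_k\|\ \le\ \frac{2\|x_0-x^\star\|+c\,\mathcal{E}_k}{k+1},
\]
where $c$ is an absolute constant and $\mathcal{E}_k$ is a weighted aggregate of $\|\delta_0\|,\dots,\|\delta_{k-1}\|$, together with the standard boundedness estimate $\|x_k-x^\star\|\le\|x_0-x^\star\|+\sum_{j<k}\|\delta_j\|$ (using $\|T(x_k)-x^\star\|\le\|x_k-x^\star\|$). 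It is here that firm nonexpansiveness of $T$, rather than mere nonexpansiveness of $J_{\eta(F+G)}$, is exploited.

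To control $\delta_k$: evaluating $J_{\eta(F+G)}(x_k)$ amounts to solving $0\in\eta G(z)+\bigl((\id+\eta F)(z)-x_k\bigr)$, where $\eta G$ is maximally monotone and $z\mapsto(\id+\eta F)(z)-x_k$ is $(1-\eta L)$-strongly monotone (because $F$ is $L$-Lipschitz, hence $L$-hypomonotone, so $\id+\eta F$ is $(1-\eta L)$-strongly monotone, and $1-\eta L>0$ since $\eta<\tfrac1L$) and $(1+\eta L)$-Lipschitz. Thus \texttt{FBF} with $\tau=\tfrac1{2(1+\eta L)}$, warm-started at $x_k$, contracts the distance to $x_k^\star:=J_{\eta(F+G)}(x_k)$ at a linear rate of order $\tfrac{1-\eta L}{1+\eta L}$ per step, so after $N_k=\bigl\lceil\tfrac{4(1+\eta L)}{1-\eta L}\log(98\sqrt{k+2}\log(k+2))\bigr\rceil$ steps one gets $\|\widetilde J_{\eta(F+G)}(x_k)-x_k^\star\|\le\tfrac1{98\sqrt{k+2}\log(k+2)}\|x_k-x_k^\star\|$ — the exponent $\tfrac{4(1+\eta L)}{1-\eta L}$ in $N_k$ is chosen precisely so that the contraction factor equals $(98\sqrt{k+2}\log(k+2))^{-1}$ — hence $\|\delta_k\|\le\tfrac1{98\sqrt{k+2}\log(k+2)}\|r_k\|$. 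I then close the argument by induction on $k$: granting $\|r_j\|\le\tfrac{4\|x_0-x^\star\|}{j+1}$ for all $j<k$ gives $\|\delta_j\|\le\tfrac{4\|x_0-x^\star\|}{98\,(j+1)^{3/2}\log(j+2)}$, so $\mathcal{E}_k$ — a weighted sum of a sequence decaying like $(j+1)^{-3/2}/\log(j+2)$ — is at most $\tfrac2c\|x_0-x^\star\|$ (this is exactly the role of the constant $98$, the $\sqrt{\cdot}$, and the $\log$ inside $N_k$), and the display above yields $\|r_k\|\le\tfrac{4\|x_0-x^\star\|}{k+1}$. Undoing the change of variables of the first paragraph gives the stated inequality. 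For the oracle count, each of the $N_k$ inner \texttt{FBF} steps evaluates $B=\id+\eta F-x_k$ twice (two evaluations of $F$) and computes one resolvent of $\tau\eta G$, i.e. at most two first-order oracle calls, for a total of $2N_k$ at iteration $k$.

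The main obstacle is the second paragraph: carrying the exact Halpern potential/Lyapunov analysis through to the conically nonexpansive \emph{and} inexact setting and extracting the error aggregate $\mathcal{E}_k$ in a form whose growth is genuinely controllable. This is coupled with the mild self-reference in the third paragraph, where the admissible inner accuracy depends, through the warm start at $x_k$, on the very residual $\|r_k\|$ one is trying to bound; this is what forces the induction and the careful matching of the constant $98$ so that $\mathcal{E}_k$ stays bounded by $\tfrac2c\|x_0-x^\star\|$ uniformly in $k$.
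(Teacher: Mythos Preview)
Your proposal is correct and follows essentially the same approach as the paper. The paper makes explicit the inexact-Halpern bound you defer to the ``second paragraph'': its \Cref{lem: cohypo_outer_mainlem} gives exactly a recursion of the shape you posit, with $\mathcal{E}_K$ taking the concrete form $\sum_{k<K}\bigl(\tfrac{1}{2}(k+1)(k+2)\varepsilon_k^2+(k+1)\|R(x_k)\|\varepsilon_k\bigr)$, after which the induction with $\varepsilon_k=\tfrac{1}{98}\tfrac{\|R(x_k)\|}{\sqrt{k+2}\log(k+2)}$ (your $\|\delta_k\|$ bound) closes the constant at $4$ just as you describe.
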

\begin{corollary}\label{cor: cohypo_det}
Under the setting of \Cref{th: cohypo_det}, for any $\varepsilon>0$, we have $\eta^{-1}\| (\id-J_{\eta(F+G)})(x_K)\| \leq \varepsilon$, for $K\leq \left\lceil \frac{4 \|x_0-x^*\|}{(\eta-\rho) \varepsilon} \right\rceil$ and
first-order oracle complexity
    \begin{equation*}
        \widetilde{O} \left( \frac{(1+\eta L)\|x_0 - x^\star\|}{\varepsilon  \left(\eta-\rho\right)(1-\eta L)} \right).
    \end{equation*}
\end{corollary}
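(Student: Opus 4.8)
\emph{Proof plan.} I would deduce \Cref{cor: cohypo_det} from \Cref{th: cohypo_det} by an essentially arithmetic argument — choosing the horizon $K$ so that the theorem's bound at index $K$ drops below $\varepsilon$, then summing the per-iteration oracle counts $2N_k$ — so almost all the work sits in \Cref{th: cohypo_det}, for which the plan is as follows. Set $J := J_{\eta(F+G)}$ and $\alpha = 1 - \tfrac{\rho}{\eta} \in (0,1)$, which is legitimate because $\rho < \eta$. By the cohypomonotone resolvent calculus of \citet{bauschke2021generalized} recorded in \Cref{fact: cohypo}, under Assumptions~\ref{asp:1}--\ref{asp:2} with $\rho<\eta$ the resolvent $J$ is single-valued, $\dom J = \mathbb{R}^d$, and $\tfrac{1}{2\alpha}$-conically nonexpansive; hence $T := (1-\alpha)\id + \alpha J$ is firmly nonexpansive, and $\mathrm{Fix}(T)=\mathrm{Fix}(J)$ is exactly the solution set of \eqref{eq: sde4}, so $x^\star\in\mathrm{Fix}(T)$. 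The elementary identity $x_k - J(x_k) = \alpha^{-1}\big(x_k - T(x_k)\big)$ converts any displacement estimate for the Halpern iteration applied to $T$ into the resolvent-residual estimate claimed in the theorem, the factor $\alpha^{-1}=\tfrac{\eta}{\eta-\rho}$ producing the $(\eta-\rho)$ in the denominator. This is precisely the place where \emph{conic} (rather than plain) nonexpansiveness is used, and why $\rho<\eta$, hence $\rho<\tfrac1L$, is all that is needed.

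Next I would analyze the inexact Halpern iteration on the firmly nonexpansive $T$. The update in \Cref{alg:cohypo} is $x_{k+1} = \beta_k x_0 + (1-\beta_k)\big(T(x_k) + e_k\big)$ with perturbation $e_k = \alpha\big(\widetilde{J}_{\eta(F+G)}(x_k) - J(x_k)\big)$. Since $F$ is $L$-Lipschitz it is $L$-hypomonotone, so $\id + \eta F$ is $(1-\eta L)$-strongly monotone and $(1+\eta L)$-Lipschitz, and evaluating $J(x_k)$ amounts to solving the strongly monotone inclusion $0\in(\id+\eta F+\eta G)(z)-x_k$; Tseng's FBF with step $\tfrac{1}{2(1+\eta L)}$ converges linearly with a contraction factor bounded away from $1$ by $\Omega\!\big(\tfrac{1-\eta L}{1+\eta L}\big)$, and $N_k$ is calibrated so that the multiplicative FBF error guarantee yields $\|e_k\| \le \tfrac{\|x_k - T(x_k)\|}{98\sqrt{k+2}\log(k+2)}$ — crucially with no a priori knowledge of $\|x_k-J(x_k)\|$. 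Then, extending the inexact-Halpern estimates of \citep[Lemma~12]{diakonikolas2020halpern} and \citep[Lemma~C.3]{cai2023variance} (stated there for (firmly) nonexpansive operators with additive errors) by carrying the Lieder/Sabach--Shtern potential for Halpern iteration on the firmly nonexpansive $T$ and keeping the $e_k$ terms explicit, and using that the exact rate is $\|x_k - T(x_k)\| \le \tfrac{2\|x_0-x^\star\|}{k+1}$ while the errors are a summable fraction of the current displacement (the role of the constant $98$), an induction on $k$ closes with $\|x_k - T(x_k)\| \le \tfrac{4\|x_0-x^\star\|}{k+1}$; squaring and applying $x_k - J(x_k)=\alpha^{-1}(x_k-T(x_k))$ gives the theorem's bound. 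For the oracle count, each FBF step uses two evaluations of $F$ and one resolvent of $\eta G$, so outer iteration $k$ costs at most $2N_k$ first-order oracles.

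Finally, for \Cref{cor: cohypo_det}: take $K = \big\lceil \tfrac{4\|x_0-x^*\|}{(\eta-\rho)\varepsilon}\big\rceil$, so the square root of the theorem's bound at index $K$ gives $\eta^{-1}\|(\id-J_{\eta(F+G)})(x_K)\| \le \tfrac{4\|x_0-x^*\|}{(\eta-\rho)(K+1)} \le \varepsilon$. For the complexity, $N_k$ is nondecreasing in $k$, so $\sum_{k=0}^{K-1}2N_k \le 2K N_{K-1} = O\!\big(K\cdot\tfrac{1+\eta L}{1-\eta L}\log K\big)$; substituting $K = O\!\big(\tfrac{\|x_0-x^*\|}{(\eta-\rho)\varepsilon}\big)$ and absorbing the logarithm into $\widetilde{O}(\cdot)$ gives $\widetilde{O}\!\big(\tfrac{(1+\eta L)\|x_0-x^\star\|}{\varepsilon(\eta-\rho)(1-\eta L)}\big)$. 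The genuine difficulty in this program is not the corollary, which is routine once the theorem is available, but the inexact-Halpern induction: the perturbation terms are bounded only in terms of the very displacement being estimated, so one must verify they can be absorbed without degrading the constant past $4$ — which is exactly what fixes the constant $98$ and forces a careful accounting of the FBF contraction factor.
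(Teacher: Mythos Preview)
Your proposal is correct and follows essentially the same approach as the paper: the corollary is derived from \Cref{th: cohypo_det} by choosing $K = \left\lceil \tfrac{4\|x_0-x^\star\|}{(\eta-\rho)\varepsilon}\right\rceil$ so that the theorem's bound at index $K$ falls below $\varepsilon$, then bounding the total oracle count by $K$ times the largest per-iteration cost $2N_{K-1}$ and absorbing the logarithm into $\widetilde{O}(\cdot)$. Your extended discussion of the theorem's proof plan (conic nonexpansiveness giving firm nonexpansiveness of $T=(1-\alpha)\id+\alpha J$, the FBF subproblem being strongly monotone, and the induction absorbing the multiplicative errors) is also faithful to the paper's argument, though strictly speaking only the final paragraph is needed for the corollary itself.
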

\begin{remark}\label{rem: opt_meas}
    The definition of $x^\star$ gives that $(\id-J_{\eta(F+G)})(x^\star) = 0$ and $(\id-J_{\eta(F+G)})(x_k)$ is indeed the fixed point residual, which is a standard way to measure optimality for fixed point iterations, see e.g., \citep[Section 2.4.2]{ryu2022large}. Based on Cor. \ref{cor: cohypo_det}, it is straightforward to produce $x^{\out}$ with $\dist(0, (F+G)(x^{\out}))\leq\varepsilon$ as claimed in Table \ref{table:1}, with no change in the worst-case complexity. This is clear when $G\equiv 0$. In the general case, see \citep[Lemma C.4]{cai2023variance}. 
\end{remark}
\begin{remark}
    The constant in our complexity deteriorates as $\rho$ gets close to $\eta$ which is the same as most of the works included in Table \ref{table:1}. It is straightforward to make our  bound \emph{$\rho$-independent} in view of \cite{pethick2023stable} by simply
    expressing $\rho$
    as a fraction of $\eta$, e.g. assume $\rho < \frac{9\eta}{10}$. Then, at the expense of a constant multiple of $10$, we have the complexity
        $\widetilde{O}\left( \frac{(1+\eta L)\|x_0-x^\star\|}{\varepsilon\eta(1-\eta L)} \right),$ valid for the range $\rho < \frac{9}{10L}$.
    In comparison, the $\rho$-independent complexity result in 
    \cite{pethick2023stable}
    had $\widetilde{O}(\varepsilon^{-2})$ for $\rho < \frac{1}{2L}$.
    A similar reasoning by slightly restricting the range of $\rho$ can also make the algorithms agnostic to the knowledge of $\rho$.
\end{remark}
\textbf{Outline of the analysis. } We follow the steps sketched in \Cref{subsec: high_level_cohypo}. First, we analyze Halpern iteration with inexactness using the tools mentioned in \eqref{item: halpern_rec1}, \eqref{item: halpern_rec2}. Second, we analyze the inner loop (\Cref{alg:fbf}) as mentioned in \eqref{item: halpern_rec4}. Finally we piece together these ingredients.
\subsubsection{Outer-Loop Complexity} \label{sec:olc1}
We now analyze Halpern iteration with inexactness in the resolvent computation. 
See Appendix~\ref{sec: halpern_outer_app} for the proof.
\begin{lemma}\label{lem: cohypo_outer_mainlem}
Let Assumptions \ref{asp:1} and \ref{asp:2} hold. Suppose that the iterates $(x_k)$ of \Cref{alg:cohypo}  
satisfy $\| J_{\eta(F+G)}(x_i) - \widetilde{J}_{\eta(F+G)}(x_i)\| \le \varepsilon_i$ for some $\varepsilon_i>0$ and $\rho < \eta$. Let $R=\id-J_{\eta(F+G)}$.
    Then, we have for any $K\geq 1$ that
    \begin{align*}
        \frac{ K(K+1)}{4} \| R(x_K)\|^2 - \frac{K+1}{K\alpha^2} \| x^\star - x_0\|^2 \leq   \sum_{k=0}^{K-1} \left(\frac{(k+1)(k+2)\varepsilon_k^2}{2}+(k+1)\|R(x_k)\|\varepsilon_k\right),
    \end{align*}
    where $\|x_{k}-x^\star \| \leq \|x_0 - x^\star \| + \frac{\alpha}{k+1}\sum_{i=0}^{k-1} (i+1) \varepsilon_i$.
\end{lemma}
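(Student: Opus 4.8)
The plan is to recognize the recursion of \Cref{alg:cohypo} as an \emph{inexact Halpern iteration driven by a firmly nonexpansive operator} and then run the standard Halpern potential-function argument while carrying the approximation error. Set $\alpha=1-\frac{\rho}{\eta}\in(0,1)$ and $T:=(1-\alpha)\id+\alpha J_{\eta(F+G)}=\id-\alpha R$ with $R=\id-J_{\eta(F+G)}$. Since $\rho<\eta$, \Cref{fact: cohypo} guarantees that $J_{\eta(F+G)}$ is single-valued with full domain and $\frac{1}{2\alpha}$-conically nonexpansive, so $T$ is firmly nonexpansive; moreover $x^\star=J_{\eta(F+G)}(x^\star)$, hence $T(x^\star)=x^\star$ and $R(x^\star)=0$. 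Writing $\widetilde T(x_k):=(1-\alpha)x_k+\alpha\widetilde J_{\eta(F+G)}(x_k)$ and $e_k:=\widetilde T(x_k)-T(x_k)$, the hypothesis gives $\|e_k\|=\alpha\|\widetilde J_{\eta(F+G)}(x_k)-J_{\eta(F+G)}(x_k)\|\le\alpha\varepsilon_k=:\delta_k$, and the update reads $x_{k+1}=\beta_k x_0+(1-\beta_k)\widetilde T(x_k)$ with $\beta_k=\frac{1}{k+2}$. Putting $u_k:=x_k-T(x_k)=\alpha R(x_k)$ and multiplying the claimed bound through by $\alpha^2$ (using $\alpha^2\|R(x_j)\|^2=\|u_j\|^2$), it becomes equivalent to
\[
\frac{K(K+1)}{4}\|u_K\|^2-\frac{K+1}{K}\|x_0-x^\star\|^2\;\le\;\sum_{k=0}^{K-1}\left(\frac{(k+1)(k+2)}{2}\delta_k^2+(k+1)\|u_k\|\,\delta_k\right),
\]
so it suffices to establish this inexact Halpern bound for a firmly nonexpansive $T$ with $T(x^\star)=x^\star$; the stated iterate bound likewise becomes $\|x_k-x^\star\|\le\|x_0-x^\star\|+\frac1{k+1}\sum_{i=0}^{k-1}(i+1)\delta_i$.

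The iterate displacement bound is the elementary part. Since $T$ is nonexpansive and $T(x^\star)=x^\star$, the triangle inequality gives $\|\widetilde T(x_k)-x^\star\|\le\|x_k-x^\star\|+\delta_k$, hence $\|x_{k+1}-x^\star\|\le\beta_k\|x_0-x^\star\|+(1-\beta_k)\bigl(\|x_k-x^\star\|+\delta_k\bigr)$. Substituting $\beta_k=\frac1{k+2}$ and inducting on $k$, using the telescoping identity $\frac1{k+2}\sum_{i=0}^{k-1}(i+1)\delta_i+\frac{k+1}{k+2}\delta_k=\frac1{k+2}\sum_{i=0}^{k}(i+1)\delta_i$, yields the claimed bound (and then the $\varepsilon_i$ form after $\delta_i=\alpha\varepsilon_i$).

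For the residual bound I would follow the potential-function analysis of inexact Halpern iteration for firmly nonexpansive operators, extending \citep[Lemma~12]{diakonikolas2020halpern} and \citep[Lemma~C.3]{cai2023variance} to the present conic setting. The starting point is the exact identity obtained by algebraic manipulation of the update,
\[
(k+1)\,u_{k+1}=(k+1)\bigl(T(x_k)-T(x_{k+1})\bigr)+(k+1)e_k+(x_0-x_{k+1}),
\]
together with the step expression $x_k-x_{k+1}=\frac{k+1}{k+2}u_k-\frac1{k+2}(x_0-x_k)-\frac{k+1}{k+2}e_k$ and $x_0-x_{k+1}=\frac{k+1}{k+2}\bigl((x_0-x_k)+u_k-e_k\bigr)$. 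One then invokes firm nonexpansiveness of $T$ in its cocoercive form $\|T(x_k)-T(x_{k+1})\|^2+\|u_k-u_{k+1}\|^2\le\|x_k-x_{k+1}\|^2$, substitutes $T(x_k)-T(x_{k+1})=u_{k+1}-\frac{k+1}{k+2}e_k-\frac1{k+2}\bigl((x_0-x_k)+u_k\bigr)$, expands the squares, and regroups the $x_0$-anchored inner products at step $k{+}1$ so that they collect against $x_0-x_{k+1}$. This produces a one-step recursion $\Phi_{k+1}\le\Phi_k+(\text{error}_k)$ for a weighted quadratic potential $\Phi_k$ built from $\|u_k\|^2$ and a linear term in $(u_k,\,x_0-x_k)$, with the $\|u_k\|^2$-weight growing like $k^2$. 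Telescoping from $k=0$ to $K-1$, bounding $\Phi_0$ by a constant multiple of $\|x_0-x^\star\|^2$ via $\|u_0\|\le\|x_0-x^\star\|$ (firm nonexpansiveness evaluated at $x^\star$) and Cauchy--Schwarz, and lower-bounding $\Phi_K$ by $\frac{K(K+1)}{4}\|u_K\|^2$ up to a $\frac{K+1}{K}\|x_0-x^\star\|^2$ correction (again by Cauchy--Schwarz on the linear term) delivers the displayed inequality.

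I expect the main obstacle to be the error bookkeeping in this last step. The perturbation $e_k$ enters the expanded square only through cross terms $\langle u_{k+1},e_k\rangle$, $\langle u_k,e_k\rangle$, $\langle x_0-x_{k+1},e_k\rangle$ and the pure term $\|e_k\|^2$, and to land exactly on $\frac{(k+1)(k+2)}{2}\delta_k^2+(k+1)\|u_k\|\delta_k$ one must (i) absorb the $O(\delta_k^2)$ and $\langle u_{k+1},e_k\rangle$-type contributions into the $-\|u_k-u_{k+1}\|^2$ slack that firm --- rather than mere --- nonexpansiveness supplies, which is precisely why firm nonexpansiveness is indispensable and why no analogous relaxation of the inexactness level follows from plain nonexpansiveness; and (ii) apply Young's inequality with weights matched to the $\Theta(k^2)$ scaling of $\Phi_k$, so that $\|u_{k+1}\|$-cross terms can be routed either into $\Phi_{k+1}$ or into the quoted error expression with no residual $\|u_{k+1}\|$ term left over. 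Pinning down these weights and verifying that the telescoped constants sum to exactly $\frac{K(K+1)}{4}$ and $\frac{K+1}{K}$ is the delicate, though routine, computation.
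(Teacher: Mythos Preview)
Your proposal is correct and follows essentially the same route as the paper. The paper works directly with the $\alpha$-cocoercivity of $R=\id-J_{\eta(F+G)}$ (which is exactly your firm nonexpansiveness of $T$ rewritten), derives the two representations of $x_{k+1}-x_k$ that you also write down, obtains the telescoping potential $\frac{\alpha k(k+1)}{2}\|R(x_k)\|^2+(k+1)\langle R(x_k),x_k-x_0\rangle$, and---this is the point you correctly single out---uses the surplus $-\tfrac{\alpha}{2}\|R(x_{k+1})-R(x_k)\|^2$ (your $-\|u_k-u_{k+1}\|^2$ slack) to cancel the Young-inequality cost of the $\langle R(x_{k+1})-R(x_k),e_k\rangle$ cross term, leaving precisely $\tfrac{\alpha}{2}\varepsilon_k^2+\alpha\beta_k\|R(x_k)\|\varepsilon_k$ as the per-step error; your iterate-norm bound also matches the paper's \Cref{lem: wlk3}.
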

In \eqref{eq:ci1} below, we define appropriate values for $\varepsilon_k$, and show that the number of inner iterations $N_k$ selected for $\texttt{FBF}$ in \Cref{alg:cohypo} suffices to achieve the inexactness level $\varepsilon_k$.

This analysis extends \citet{diakonikolas2020halpern}, who studied monotone inclusions, in two aspects.
First, we analyze the convergence of the method under conic nonexpansiveness which is the relevant property when the parameter $\rho$ lies in the range $[\frac{1}{2L},\frac{1}{L})$. 
Second, and more importantly,  we conduct a tighter error analysis that allows the inexactness on the error in resolvent computation ($\varepsilon_k$) to be $\widetilde{O}(k^{-3/2})$ instead of the tolerance $\widetilde{O}(k^{-3})$ used in \citep{diakonikolas2020halpern,yoon2022accelerated,cai2023variance}. Even though it is not immediately obvious, this is because the bottleneck term on the bound in \Cref{lem: cohypo_outer_mainlem} is $\sum_{k=0}^{K-1}(k+1)(k+2)\varepsilon_k^2$ which sums to a $\log$ with $\varepsilon_k=\widetilde{O}(k^{-3/2})$.\footnote{A similar insight appeared in a different context in the independent work \cite{liang2024inexact}, which came out on arXiv at the same time as our paper.}
This tightening becomes important in the stochastic case in \Cref{sec: extensions}, where the inner loop does not have a linear rate of convergence.

The improvement derives from applying Halpern to the \emph{firmly nonexpansive} operator $(1-\alpha)\id+\alpha J_{\eta(F+G)}$, which helps avoid the main source of \emph{looseness} in the previous analysis which only uses nonexpansiveness. 
We discuss this further following \eqref{eq: byu4}.
See \Cref{rem: reflection} for a discussion from the viewpoint of nonexpansive operators.
\begin{remark}\label{rem: reflection}
By $\frac{1}{2\alpha}$-conic nonexpansiveness of $J_{\eta(F+G)}$ (see \Cref{fact: cohypo}(ii)), we have nonexpansiveness of $T'=(1-2\alpha) \id + 2\alpha J_{\eta(F+G)}$. If we were to apply Halpern iteration to this operator, we would still obtain results with $\rho < \frac{1}{L}$  but we would need a stricter inexactness requirement as the analyses in \cite{diakonikolas2020halpern,cai2023variance} dictate.

This can be viewed
as a Cayley (or reflection) operator of a firmly nonexpansive operator $T = (1-\alpha)\id + \alpha J_{\eta(F+G)}$, 
since $T' = 2T - \id$.
Our algorithm applies Halpern iteration to $T$ which helps us  relax the inexactness requirement.

On the other hand, as shown in \citep[Section 12.2]{ryu2022large}, while solving monotone inclusions with \emph{exact} evaluations of the resolvent, applying Halpern to the Cayley operator of the resolvent gives a better constant, by a factor of $4$. Our analysis brings to light a tradeoff between the constant in the convergence bound and the allowed inexactness in the computation of the resolvent.
\end{remark}
\subsubsection{Inner-Loop Complexity}\label{subsubsec: cohypo_inner}
The seminal FBF algorithm of \cite{tseng2000modified} is optimal for solving the resolvent subproblem, which is a strongly monotone inclusion. 
We provide the derivation of the precise constants appearing in the statement in \Cref{subsubsec: cohypo_inner}.
\begin{theorem}{(See \citep[Theorem 3.4]{tseng2000modified})}\label{th: fbf}
    Let $B$ be $\mu$-strongly monotone with $\mu > 0$ and $L_B$-Lipschitz; $A$ be maximally monotone, and $z^\star=(A+B)^{-1}(0)\neq \emptyset$. For any $\zeta>0$, running Algorithm~\ref{alg:fbf} with $\tau = \frac{1}{2L_B}$ and initial point $z_0$ for $N=\left\lceil \frac{4L_B}{\mu}\log\frac{\|z_0-z^\star\|}{\zeta} \right\rceil$ iterations give
    \begin{equation*}
        \|z_N - z^\star\| \leq\zeta,
    \end{equation*}
    where the number of calls to evaluations of $B$ and resolvents of $A$ is upper bounded by $2N$.
\end{theorem}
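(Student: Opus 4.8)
The plan is to analyze one iteration of Algorithm~\ref{alg:fbf} and establish a contraction of the form $\|z_{t+1} - z^\star\|^2 \le c\,\|z_t - z^\star\|^2$ with $c = 1 - \Theta(\mu/L_B) < 1$, then iterate $N$ times and choose $N$ to drive the distance below $\zeta$. First I would recall the Tseng FBF update written in terms of the single operator $B$ (note that in Algorithm~\ref{alg:fbf} the operator $B(\cdot) = B_{\mathrm{in}}(\cdot) - z_0$ is just a shift of a $\mu$-strongly monotone, $L_B$-Lipschitz operator, hence retains both properties, and $A$ plays the role of the maximally monotone part). Write $z_{t+1/2} = J_{\tau A}(z_t - \tau B(z_t))$ and $z_{t+1} = z_{t+1/2} + \tau(B(z_t) - B(z_{t+1/2}))$, with the fixed point $z^\star$ satisfying $z^\star = J_{\tau A}(z^\star - \tau B(z^\star))$.

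The key steps are as follows. First, use the defining inclusion $z_t - \tau B(z_t) - z_{t+1/2} \in \tau A(z_{t+1/2})$ together with monotonicity of $A$ against the pair $(z^\star, \tfrac{1}{\tau}(z^\star - \tau B(z^\star) - z^\star)) = (z^\star, -B(z^\star)) \in A(z^\star)$ (after scaling) to obtain the standard prox-type inequality
\[
\langle z_t - z_{t+1/2} - \tau(B(z_t) - B(z^\star)),\ z_{t+1/2} - z^\star \rangle \ge 0.
\]
Second, expand $\|z_{t+1} - z^\star\|^2$ using $z_{t+1} - z^\star = (z_{t+1/2} - z^\star) + \tau(B(z_t) - B(z_{t+1/2}))$; the cross term combines with the inequality above and with $\mu$-strong monotonicity of $B$ at the pair $(z_t, z^\star)$, contributing a negative term $-2\tau\mu\|z_t - z^\star\|^2$ (this is exactly where strong monotonicity, as opposed to mere monotonicity, enters and produces the linear rate rather than a sublinear one). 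Third, control the remaining quadratic terms $\|z_{t+1/2} - z_t\|^2$ and $\tau^2\|B(z_t) - B(z_{t+1/2})\|^2$ using $L_B$-Lipschitzness of $B$ and the choice $\tau = \tfrac{1}{2L_B}$, which makes $\tau^2 L_B^2 = \tfrac14$ so that the perturbation terms are absorbed (this cancellation is the classical reason FBF permits the stepsize $\tfrac{1}{2L_B}$). Assembling these yields $\|z_{t+1} - z^\star\|^2 \le (1 - \tau\mu)\|z_t - z^\star\|^2$ or a comparable bound with contraction factor $1 - \Theta(\mu/L_B)$.

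The main obstacle I expect is the bookkeeping in the third step: getting the constants to line up so that $\tau = \tfrac{1}{2L_B}$ is admissible and the residual terms $\|z_{t+1/2} - z_t\|^2$ are genuinely dominated, rather than merely bounded by something of the same order. One has to be careful to split $\|z_{t+1/2} - z_t\|^2$ via Young's inequality against the positive leftover from the prox inequality and against the strong-monotonicity term, and to track that nothing is double-counted. Once the one-step contraction $\|z_{t+1} - z^\star\| \le \sqrt{1 - \tau\mu}\,\|z_t - z^\star\| \le (1 - \tfrac{\tau\mu}{2})\|z_t - z^\star\|$ is in hand, telescoping gives $\|z_N - z^\star\| \le (1 - \tfrac{\mu}{4L_B})^N \|z_0 - z^\star\|$, and using $1 - x \le e^{-x}$ shows that $N = \lceil \tfrac{4L_B}{\mu}\log\tfrac{\|z_0 - z^\star\|}{\zeta}\rceil$ suffices; the oracle count $2N$ is immediate since each inner iteration uses two evaluations of $B$ (at $z_t$ and $z_{t+1/2}$, though $B(z_t)$ is cached from the previous step, so effectively one new $B$-evaluation and one $A$-resolvent per iteration, hence at most $2N$ total) and one resolvent of $A$.
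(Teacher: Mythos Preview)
Your proposal is correct and follows the same route as the paper. The paper simply invokes \citep[Theorem~3.4(c)]{tseng2000modified} as a black box to obtain the one-step contraction $\|z_{t+1}-z^\star\|^2 \le (1-\tfrac{\mu}{2L_B})\|z_t-z^\star\|^2$ and then unrolls; you instead sketch the derivation of that contraction (prox inequality from monotonicity of $A$, strong monotonicity and Lipschitzness of $B$, the choice $\tau=\tfrac{1}{2L_B}$), which is precisely the content of Tseng's argument, and your telescoping step matches the paper's. One minor correction: your parenthetical about caching $B(z_t)$ from the previous iteration is incorrect, since $z_t \neq z_{t-1/2}$ in general, so each FBF step genuinely uses two fresh $B$-evaluations and one resolvent of $A$; this does not affect the substance of your argument.
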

 
\subsubsection{Total complexity}\label{subsubsec: halpern_total}
\Cref{subsec: high_level_cohypo} already shows the key steps in our analysis, but we combine the preliminary results above into a  proof sketch here, to highlight the simplicity of our approach. Full proof is given in \Cref{sec: total_compl_app}.
\begin{proof}[Proof sketch of \Cref{th: cohypo_det}]
Denote $R=\id-J_{\eta(F+G)}$ for brevity.
Suppose that $\varepsilon_k$ in \Cref{lem: cohypo_outer_mainlem} satisfies 
\begin{equation} \label{eq:ci1}
    \varepsilon_k = \frac{\gamma \|R(x_k)\|}{ \sqrt{k+2}\log(k+2)}, \text{~with~} \gamma = \frac{1}{98}.
\end{equation}
We justify this supposition further below. Then we have by \Cref{lem: cohypo_outer_mainlem} (after multiplying both sides by $\alpha$) that
\begin{align*}
    \frac{\alpha K(K+1)}{4}\|R(x_K)\|^2 - \frac{K+1}{K\alpha} \| x_0 - x^\star\|^2 \leq \sum_{k=0}^{K-1} \|R(x_k)\|^2\left( \frac{\alpha \gamma^2(k+1)}{2 \log^2(k+2)} + \frac{\alpha \gamma \sqrt{k+2}}{\log (k+2)}\right).
\end{align*}
We can show by induction from this bound 
that 
\begin{equation*}
    \|R(x_k)\| \leq \frac{4\|x_0 - x^\star\|}{\alpha(k+1)}~~~\forall k \geq 1.
\end{equation*}
We see that for $K \leq \lceil \frac{4\|x_0-x^\star\|}{\eta\alpha\varepsilon} \rceil$, we are guaranteed to have $\eta^{-1}\|R(x_K) \| \leq \varepsilon$.

We now calculate the number of inner iterations to reach the accuracy $\varepsilon_k$ (see \eqref{eq:ci1}). 
At iteration $k$, as per the setup in Theorem \ref{th: fbf}, we set 
\begin{equation*}
\begin{aligned}
    &A\equiv\eta G, ~~~B(\cdot)\equiv(\id+\eta F)(\cdot)-x_k, ~~~z_0 \equiv x_k, \\
    &z_N\equiv \widetilde{J}_{\eta(F+G)}(x_k), ~~~ z^\star \equiv J_{\eta(F+G)}(x_k), ~~~ \zeta\equiv\varepsilon_k,
\end{aligned}
\end{equation*}
hence $z_0 - z^\star = (\id-J_{\eta(F+G)})(x_k) = R(x_k)$. $B$ is $L_B\equiv(1+\eta L)$-Lipschitz and $(1-\eta L)$-strongly monotone due to \Cref{fact: cohypo}(iv). Existence of $z^\star$ is guaranteed by \Cref{fact: cohypo}(i).

By matching these definitions with \Cref{alg:cohypo}, we see by invoking \Cref{th: fbf}  that the number of inner iterations used at step $k$ to obtain  $\|J_{\eta(F+G)}(x_k)-\widetilde{J}_{\eta(F+G)}(x_k)\|\leq\varepsilon_k$ is 
\begin{equation*}
N_k\equiv\left\lceil \frac{4(1+\eta L)}{1-\eta L}\log\frac{\|R(x_k)\|}{\varepsilon_k}\right\rceil,    
\end{equation*}
by the settings of $z_0$, $z^\star$, $R(x_k)$, and $\zeta$  above, along with $\varepsilon_k$ defined in \eqref{eq:ci1}. This value is precisely $N_k$ used in \Cref{alg:cohypo} (by the definition of $\varepsilon_k$), which justifies our application of \Cref{lem: cohypo_outer_mainlem} and the cost at iteration $k$.
\end{proof}

\section{Algorithm and Analysis under weak MVI}\label{sec: weak_mvi}
\subsection{Algorithm Construction and Analysis Ideas}\label{subsec: cohypo_ideas}
\begin{algorithm*}[h]
\caption{Inexact KM iteration for problems with weak MVI}
\begin{algorithmic}
    \STATE {\bfseries Input:} Parameters $\eta>0, L, \rho$, $\alpha_k=\alpha=1-\frac{\rho}{\eta}$, $K>0$, initial iterate $x_0\in\mathbb{R}^d$, subroutine $\texttt{FBF}$ in Algorithm \ref{alg:fbf} \\
    \vspace{.2cm}
    \FOR{$k = 0, 1, 2,\ldots, K-1 $}
        \STATE $\widetilde{J}_{\eta(F+G)}(x_k) = \texttt{FBF}\left(x_k, N_k, \eta G, \id + \eta F,1+\eta L\right)$, where $N_k=\left\lceil \frac{4(1+\eta L)}{1-\eta L}\log(8(k+1)\log^2(k+2))\right\rceil$ 
        \STATE $x_{k+1} = (1-\alpha_k)x_k + \alpha_k \widetilde{J}_{\eta(F+G)}(x_k)$
        \ENDFOR
      \end{algorithmic}
\label{alg:weakmvi}
\end{algorithm*}

We turn to the \emph{weak MVI condition} of Assumption \ref{asp:3}, which (as mentioned in \Cref{sec: prelim}) is weaker than cohypomonotonicity.
The best-known complexity under this assumption is $O(\varepsilon^{-2})$:
the lower part of \Cref{table:1} outlines existing results.
Our aim 
is to obtain $\widetilde{O}(\varepsilon^{-2})$ complexity for the extended range $\rho < \frac{1}{L}$.
The steps of our construction are as follows.
\begin{enumerate}[(i)]
    \item  KM iteration \eqref{eq: km_iter}, when $\id-T$ is star-cocoercive, gets $\eta^{-1}\|x_k - T(x_k)\|\leq\varepsilon$ with $O(\varepsilon^{-2})$ evaluations of $T$ \citep{groetsch1972note,browder1967construction}.\label{item: km_rec1}
    \item We get from \cite{bauschke2021generalized} 
    that $J_{\eta (F+G)}$ has domain $\mathbb{R}^d$ and is single-valued when $F$ is $L$-Lipschitz and $\eta < \frac{1}{L}$. 
    Lemma \ref{lem:conic-star} gives that $J_{\eta(F+G)}$ is $\frac{1}{2\alpha}$-conically quasi-nonexpansive, with $\alpha=1-\frac{\rho}{\eta}$, leading to $\id - J_{\eta(F+G)}$ being $\alpha$-star-cocoercive. 

    Thus, we require $\rho < \eta$. As per \eqref{item: km_rec1}, KM applied to $\id-J_{\eta(F+G)}$ requires 
    $O(\varepsilon^{-2})$ evaluations of $J_{\eta(F+G)}$ to find $x$ such that $\eta^{-1}\|x - J_{\eta(F+G)}(x)\|\leq\varepsilon$.\label{item: km_rec2}
    \item Since $F$ is Lipschitz and $G$ is maximally monotone, we can estimate $J_{\eta (F+G)}$ as before (via \eqref{item: halpern_rec3} and \eqref{item: halpern_rec4} of \Cref{sec: halpern}), with a linear rate of convergence when $\eta < \frac{1}{L}$. The existence of a solution to the subproblem is guaranteed by item \eqref{item: km_rec2}. The inner iterations introduce a logarithmic factor into the total complexity. As a result, the range for $\rho$ is again $\rho < \eta < \frac{1}{L}$. 
\end{enumerate}

Even with inexactness,  Alg. \ref{alg:weakmvi} is  classical; see \citep[Theorem 12.3.7]{facchinei2003finite}, \cite{combettes2001quasi} and \cite{combettes2002generalized}. 
We analyze this scheme for problems with weak MVI solutions and characterize the  first-order oracle complexity. \citet{pethick2023stable} recently analyzed a similar scheme under cohypomonotonicity, by using quasi-nonexpansiveness of the resulting operator.\footnote{This work claimed that some of their results extend to accommodate weak MVI condition as well.} Our main difference regarding the results in this section is that we harness the milder property of conic quasi-nonexpansiveness to improve the range of $\rho$ (see also \cite{bartz2022conical} for a similar idea by using exact resolvent).
We also approximate the resolvent slightly differently.
FBF can be replaced with other optimal algorithms like \citep{malitsky2020forward}, showing the modularity of our approach.

The key insight for extending the upper bound of $\rho$ to $\frac{1}{L}$ is similar to that of \Cref{sec: halpern}. The difference is that the analysis of Halpern iteration requires conic nonexpansiveness between any pair of points in the space, making it unsuitable with weak MVI. 
In contrast, the KM iteration can be analyzed with conic nonexpansiveness holding only with respect to a solution, a property that is a consequence of weak MVI.
Conic quasi-nonexpansiveness, while not defined explicitly in \cite{bauschke2021generalized}, directly follows by adapting the corresponding results therein by using $\rho$-weak MVI condition instead of cohypomonotonicity; see \Cref{subsec: conic_star} for the details.

\subsection{Analysis}
Similar to Section \ref{sec: halpern}, we start with the main complexity result, under weak MVI. 
Its proof appears in \Cref{subsec: weakmvi_total_app}.
\begin{theorem}\label{th: weakmvi_det}
Let Assumptions \ref{asp:1} and \ref{asp:3} hold. Let $\eta < \frac{1}{L}$ in Algorithm \ref{alg:weakmvi} and suppose $\rho < \eta$. For any $K\geq 1$, we have 
    \begin{equation*}
        \frac{1}{K}\sum_{k=0}^{K-1} \frac{1}{\eta^2}\|x_k - J_{\eta(F+G)}(x_k) \|^2 \leq \frac{11\|x_0-x^\star\|^2}{(\eta-\rho)^2 K}.
    \end{equation*}
    The number of first-order oracles used at iteration $k$ is upper bounded by $2N_k$ where $N_k$ is defined in \Cref{alg:weakmvi}.
\end{theorem}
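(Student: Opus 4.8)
\textbf{Proof proposal for Theorem \ref{th: weakmvi_det}.}

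The plan is to follow the two-level template already laid out for the cohypomonotone case in \Cref{sec: halpern}, but with the KM iteration replacing Halpern and with conic \emph{quasi}-nonexpansiveness (i.e.\ nonexpansiveness only relative to a solution $x^\star$) replacing conic nonexpansiveness. First I would establish an outer-loop lemma: under Assumptions \ref{asp:1} and \ref{asp:3}, with $R := \id - J_{\eta(F+G)}$ and $x_{k+1} = (1-\alpha)x_k + \alpha\widetilde J_{\eta(F+G)}(x_k)$, one has by Lemma \ref{lem:conic-star} that $J_{\eta(F+G)}$ is $\frac{1}{2\alpha}$-conically quasi-nonexpansive, hence $R$ is $\alpha$-star-cocoercive: $\langle R(x), x - x^\star\rangle \ge \alpha\|R(x)\|^2$. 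I would then expand $\|x_{k+1}-x^\star\|^2$ using the exact update $x_k - \alpha R(x_k)$ plus the inexactness perturbation $\alpha(R(x_k) - \widetilde R(x_k))$ of size $\le \alpha\varepsilon_k$, getting $\|x_{k+1}-x^\star\|^2 \le \|x_k-x^\star\|^2 - 2\alpha^2\|R(x_k)\|^2 + \alpha^2\|R(x_k)\|^2 + (\text{cross terms and }\varepsilon_k\text{ terms})$, where the $-2\alpha^2 + \alpha^2 = -\alpha^2$ coefficient comes from combining star-cocoercivity with the quadratic term. Telescoping over $k=0,\dots,K-1$ yields $\alpha^2\sum_{k=0}^{K-1}\|R(x_k)\|^2 \le \|x_0-x^\star\|^2 + \sum_k (c_1\varepsilon_k^2 + c_2\|R(x_k)\|\varepsilon_k)$, together with a boundedness estimate $\|x_k - x^\star\| \le \|x_0-x^\star\| + \alpha\sum_{i<k}\varepsilon_i$ analogous to the one in \Cref{lem: cohypo_outer_mainlem}.

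Next I would choose the inexactness schedule. Unlike the $O(k^{-3/2})$ needed for Halpern, here — because KM already has $\sum_k \|R(x_k)\|^2$ on the left and no extra factor of $k$ multiplying $\varepsilon_k^2$ — a gentler bound suffices: I would set $\varepsilon_k = \frac{\gamma\|R(x_k)\|}{\sqrt{k+1}\,\log(k+2)}$ (or even independent of $\|R(x_k)\|$, absorbing the residual bound), so that $\sum_k c_1\varepsilon_k^2 \lesssim \sum_k \|R(x_k)\|^2/((k+1)\log^2(k+2))$ is summable against a constant fraction of the left-hand side and $\sum_k c_2\|R(x_k)\|\varepsilon_k$ is controlled by Cauchy--Schwarz into the same. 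Rearranging and picking $\gamma$ small (the constant $\frac{1}{8}$ implicit in the $N_k$ formula of \Cref{alg:weakmvi}) gives $\frac{1}{K}\sum_{k=0}^{K-1}\|R(x_k)\|^2 \le \frac{C\|x_0-x^\star\|^2}{\alpha^2 K}$ with $C$ a modest absolute constant; since $\alpha = 1-\rho/\eta = (\eta-\rho)/\eta$, multiplying through by $\eta^{-2}$ converts $\alpha^2$ in the denominator into $(\eta-\rho)^2$, and tracking constants carefully should land at the stated $11$.

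Finally, the inner-loop accounting is essentially identical to the cohypomonotone proof sketch: at step $k$ I invoke \Cref{th: fbf} with $A \equiv \eta G$, $B(\cdot) \equiv (\id + \eta F)(\cdot) - x_k$, $z_0 \equiv x_k$, so $z_0 - z^\star = R(x_k)$, $B$ is $(1+\eta L)$-Lipschitz and $(1-\eta L)$-strongly monotone by \Cref{fact: cohypo}(iv), and existence of $z^\star$ follows from item \eqref{item: km_rec2}; the number of FBF iterations to reach $\|J_{\eta(F+G)}(x_k) - \widetilde J_{\eta(F+G)}(x_k)\| \le \varepsilon_k$ is $\lceil \frac{4(1+\eta L)}{1-\eta L}\log\frac{\|R(x_k)\|}{\varepsilon_k}\rceil$, which with the chosen $\varepsilon_k$ equals precisely the $N_k$ in \Cref{alg:weakmvi}, giving $2N_k$ oracle calls at iteration $k$. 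The main obstacle I anticipate is the circularity in the inexactness schedule: $\varepsilon_k$ depends on $\|R(x_k)\|$, which is only bounded a posteriori via the telescoped inequality, so I need to verify — as in \Cref{lem: cohypo_outer_mainlem} — that the a priori bound $\|x_k - x^\star\| \le \|x_0 - x^\star\| + \alpha\sum_{i<k}\varepsilon_i$ keeps $\|R(x_k)\|$ (hence $\varepsilon_k$) uniformly controlled, and then close the loop; also some care is needed because $R$ is only \emph{star}-cocoercive, so every inequality must be anchored at $x^\star$ and I cannot use cocoercivity between generic iterates. The rest is bookkeeping of absolute constants to hit the number $11$.
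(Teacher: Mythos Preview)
Your overall architecture is right --- KM iteration, $\alpha$-star-cocoercivity of $R = \id - J_{\eta(F+G)}$ from conic quasi-nonexpansiveness, and FBF for the inner problem --- and matches the paper. But there is a genuine gap in the outer-loop error accounting and, consequently, in your choice of $\varepsilon_k$.

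When you expand $\|x_{k+1}-x^\star\|^2$ with $x_{k+1} = x_k - \alpha\widetilde R(x_k)$, the inexactness enters through the cross term $-2\alpha\langle \widetilde R(x_k) - R(x_k),\, x_k - x^\star\rangle$, which is bounded by $2\alpha\,\|x_k - x^\star\|\,\varepsilon_k$, \emph{not} by $c_2\|R(x_k)\|\varepsilon_k$ as you write. (The $\|R(x_k)\|\varepsilon_k$ contribution from $\alpha^2\|\widetilde R(x_k)\|^2$ is harmless and absorbable; the $\|x_k-x^\star\|\varepsilon_k$ term is the binding one.) The paper's \Cref{lem: rate_outer} makes this explicit. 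Once you bound $\|x_k-x^\star\| \le 2\|x_0-x^\star\|$ (via the quasi-nonexpansive recursion plus summability of the errors), this term becomes $\mathrm{const}\cdot\|x_0-x^\star\|\sum_k \varepsilon_k$, so you need $\sum_k \varepsilon_k < \infty$. Your schedule $\varepsilon_k \propto \|R(x_k)\|/(\sqrt{k+1}\log(k+2))$, after bounding $\|R(x_k)\|\le \alpha^{-1}\|x_k-x^\star\|\le 2\alpha^{-1}\|x_0-x^\star\|$, gives $\varepsilon_k = O(1/(\sqrt{k}\log k))$, which is \emph{not} summable; the argument fails here.

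This is also visible from the algorithm itself: the $N_k$ in \Cref{alg:weakmvi} is $\bigl\lceil \tfrac{4(1+\eta L)}{1-\eta L}\log\bigl(8(k+1)\log^2(k+2)\bigr)\bigr\rceil$, which via $N_k = \lceil \tfrac{4(1+\eta L)}{1-\eta L}\log(\|R(x_k)\|/\varepsilon_k)\rceil$ corresponds to $\varepsilon_k = \tfrac{\|R(x_k)\|}{8(k+1)\log^2(k+2)}$ --- a \emph{summable} schedule --- not to your proposed $\tfrac{\|R(x_k)\|}{8\sqrt{k+1}\log(k+2)}$. So your claim that ``with the chosen $\varepsilon_k$ equals precisely the $N_k$ in \Cref{alg:weakmvi}'' is false; reverse-engineer $\varepsilon_k$ from $N_k$ and you will land on the correct schedule. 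With that $\varepsilon_k$, both $\sum_k \varepsilon_k^2$ and $\sum_k \|x_k-x^\star\|\varepsilon_k$ are bounded by absolute multiples of $\alpha^{-2}\|x_0-x^\star\|^2$, and the constant $11$ drops out. The inner-loop step and the star-cocoercivity caveat you flag are handled exactly as you describe.
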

\begin{corollary}\label{cor: weakmvi_det}
    Under the setting of \Cref{th: weakmvi_det}, for any $\varepsilon>0$, we have for some $x^{\out}\in\{x_0, \dots, x_{K-1}\}$ that $\eta^{-1}\| (\id-J_{\eta(F+G)})(x^{\out})\| \leq \varepsilon$ for $K\leq \left\lceil \frac{11 \|x_0-x^*\|}{(\eta-\rho)^2 \varepsilon^2} \right\rceil$ with first-order oracle complexity
    \begin{equation*}
        \widetilde{O}\left( \frac{(1+\eta L)\|x_0 - x^\star\|^2}{\varepsilon^2  \left(\eta-\rho\right)^2(1-\eta L)} \right).
    \end{equation*}
\end{corollary}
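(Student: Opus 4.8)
\textbf{Proof proposal for Corollary~\ref{cor: weakmvi_det}.}
The plan is to derive the corollary from Theorem~\ref{th: weakmvi_det} exactly as in the cohypomonotone case (Corollary~\ref{cor: cohypo_det}), by first converting the averaged bound into an $\varepsilon$-stopping statement and then accounting for the inner-loop cost. Write $R = \id - J_{\eta(F+G)}$ and $\alpha = 1-\frac{\rho}{\eta}$, so $(\eta-\rho)^2 = \eta^2\alpha^2$. Theorem~\ref{th: weakmvi_det} gives $\frac{1}{K}\sum_{k=0}^{K-1}\frac{1}{\eta^2}\|R(x_k)\|^2 \le \frac{11\|x_0-x^\star\|^2}{(\eta-\rho)^2 K}$. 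The first step is to observe that the minimum of $\eta^{-2}\|R(x_k)\|^2$ over $k\in\{0,\dots,K-1\}$ is at most the average, so choosing $x^{\out}$ to be the iterate attaining this minimum yields $\eta^{-2}\|R(x^{\out})\|^2 \le \frac{11\|x_0-x^\star\|^2}{(\eta-\rho)^2 K}$. Hence $\eta^{-1}\|R(x^{\out})\|\le\varepsilon$ as soon as $K \ge \frac{11\|x_0-x^\star\|^2}{(\eta-\rho)^2\varepsilon^2}$, which gives the stated bound $K \le \lceil \frac{11\|x_0-x^\star\|}{(\eta-\rho)^2\varepsilon^2}\rceil$ (the numerator should read $\|x_0-x^\star\|^2$ to match the theorem; I would carry the square through consistently).

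The second step is the inner-loop accounting. At each outer iteration $k$, Algorithm~\ref{alg:weakmvi} runs $\texttt{FBF}$ for $N_k = \lceil \frac{4(1+\eta L)}{1-\eta L}\log(8(k+1)\log^2(k+2))\rceil$ iterations, at a cost of $2N_k$ first-order oracle calls (Theorem~\ref{th: fbf}, and the last sentence of Theorem~\ref{th: weakmvi_det}). Summing over $k = 0,\dots,K-1$, the total number of oracle calls is $\sum_{k=0}^{K-1} 2N_k = O\!\left(\frac{1+\eta L}{1-\eta L}\, K \log(K \log^2 K)\right) = \widetilde O\!\left(\frac{(1+\eta L)K}{1-\eta L}\right)$, where the logarithmic factors in $K$ are absorbed into the $\widetilde O$. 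Substituting the bound $K = O\!\left(\frac{\|x_0-x^\star\|^2}{(\eta-\rho)^2\varepsilon^2}\right)$ from the first step produces the claimed complexity $\widetilde O\!\left(\frac{(1+\eta L)\|x_0-x^\star\|^2}{\varepsilon^2(\eta-\rho)^2(1-\eta L)}\right)$.

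The only genuine content beyond this bookkeeping is checking that the choice of $N_k$ in Algorithm~\ref{alg:weakmvi} actually certifies the inexactness tolerance $\varepsilon_k$ required by the proof of Theorem~\ref{th: weakmvi_det}; but that is established inside the proof of the theorem itself (analogously to the matching of constants in the proof sketch of Theorem~\ref{th: cohypo_det}, where $z_0-z^\star = R(x_k)$, $B = \id+\eta F - x_k$ is $(1-\eta L)$-strongly monotone and $(1+\eta L)$-Lipschitz by Fact~\ref{fact: cohypo}(iv), and $A = \eta G$ is maximally monotone), so here I may invoke it directly. I do not anticipate a real obstacle: the main (minor) point to be careful about is keeping the power of $\|x_0-x^\star\|$ consistent between the theorem statement, the stopping time $K$, and the final complexity, and making sure the $\widetilde O$ correctly swallows both the $\log N_k$-type factors and the $\log K$ from the summation.
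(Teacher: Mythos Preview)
Your proposal is correct and follows essentially the same route as the paper: take the best iterate via the ``min $\le$ average'' argument on the bound from Theorem~\ref{th: weakmvi_det}, then sum the per-iteration inner-loop cost $2N_k$ and absorb the logarithmic factors into $\widetilde O$. Your observation about the missing square on $\|x_0-x^\star\|$ in the stopping bound is also right; the paper's own proof (see \eqref{eq: wmi_k_limit}) uses $\|x_0-x^\star\|^2$, so the corollary statement appears to contain a typo.
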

    See Remark \ref{rem: opt_meas} for details to convert this result to produce a point with $\dist(0, (F+G)(x^{\text{out}}))\leq\varepsilon$ as in Table \ref{table:1}.
    \begin{remark}\label{eq: weakmvi_rem1}
        This result is for the \emph{best iterate}, that is, $x^{\out}=\arg\min_{x\in\{x_0,\dots,x_{k-1}\}} \|(\id-J_{\eta(F+G)})(x)\|$, consistent with existing results for weak MVI, see \cite{diakonikolas2021efficient,pethick2022escaping,cai2022acceleratedb}.
    \end{remark}
    \begin{remark}\label{rem: xout}
        Note that $x^{\out}$ as defined in \Cref{eq: weakmvi_rem1} is not computable since we do not have access to $J_{\eta(F+G)}(x_k)$. For the unconstrained case, i.e., $G\equiv 0$, we can show the result with $x^{\out}  = \arg\min_{x\in\{x_0,\dots,x_{K-1}\}} \|Fx\|^2$, which is computable. For the constrained problem \eqref{eq: jut5}, we can handle this issue by slightly changing how $\widetilde{J}_{\eta(F+G)}$ is calculated and requiring the knowledge of the target accuracy $\varepsilon$, with no change in the order of complexity bounds. We present \Cref{alg:weakmvi} in its current form so that it is \emph{anytime}, not requiring the target accuracy as an input. 
        The details for making $x^{\out}$ computable are in \Cref{sec: weakmvi_remark_exp}. 
        We can also present this result as an {\em expected} bound for a \emph{randomly selected $x^{\out}$}, like \citep[Thm. 3.2(ii)]{diakonikolas2021efficient}. 
    \end{remark}
\textbf{Outer-loop complexity. } We analyze the iteration complexity of the outer loop; see \Cref{sec: weakmvi_outer} for a proof which is a modification of \cite{combettes2001quasi} and \cite{bartz2022conical} to accommodate conic quasi-nonexpansiveness and inexact resolvent computations.
\begin{lemma}\label{lem: rate_outer}
 Let Assumptions \ref{asp:1} and \ref{asp:3} hold. Suppose that the iterates $(x_k)$ of \Cref{alg:weakmvi} satisfy
$\| J_{\eta(F+G)}(x_k) - \widetilde{J}_{\eta(F+G)}(x_k)\| \le \varepsilon_k$ for some $\varepsilon_k >0$ and $\rho<\eta$.
Then, we have for $K\geq 1$ that
\begin{align*}
&\sum_{k=0}^{K-1}\|(\id - J_{\eta(F+G)})(x_k)\|^2 -\frac{2\eta^2}{(\eta-\rho)^2}\|x_0 - x^\star\|^2 \leq 6\sum_{k=0}^{K-1} \varepsilon_k^2+\frac{4\eta}{\eta-\rho} \sum_{k=0}^{K-1} \|x_k-x^\star\|\varepsilon_k,
\end{align*}
where $\|x_{k}-x^\star \| \leq \| x_{k-1} - x^\star \| + \alpha \varepsilon_{k-1}$.
\end{lemma}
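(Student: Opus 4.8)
The plan is to analyze the inexact KM iteration $x_{k+1} = (1-\alpha)x_k + \alpha \widetilde{J}_{\eta(F+G)}(x_k)$ by treating it as an exact KM step applied to a perturbed operator, then telescoping a Fej\'er-type inequality relative to $x^\star$. Let me write $J = J_{\eta(F+G)}$, $R = \id - J$, and let $y_{k+1} = (1-\alpha)x_k + \alpha J(x_k)$ denote the \emph{exact} KM update, so that $x_{k+1} = y_{k+1} + \alpha(\widetilde{J}(x_k) - J(x_k))$ and $\|x_{k+1} - y_{k+1}\| \le \alpha \varepsilon_k$. First I would expand $\|x_{k+1} - x^\star\|^2$; splitting off the perturbation term gives $\|x_{k+1}-x^\star\|^2 \le \|y_{k+1}-x^\star\|^2 + 2\alpha\varepsilon_k\|y_{k+1}-x^\star\| + \alpha^2\varepsilon_k^2$, and the middle term can be bounded using $\|y_{k+1}-x^\star\| \le \|x_k - x^\star\| + (\text{controlled terms})$ to get something of the form $\|x_k-x^\star\|\varepsilon_k$ as in the statement. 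The bound $\|x_k - x^\star\| \le \|x_{k-1}-x^\star\| + \alpha\varepsilon_{k-1}$ follows immediately from nonexpansiveness of the exact KM operator toward $x^\star$ (a consequence of conic quasi-nonexpansiveness of $J$, see \Cref{item: km_rec2} and \Cref{lem:conic-star}).

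The heart of the argument is bounding the exact step: $\|y_{k+1}-x^\star\|^2 = \|x_k - \alpha R(x_k) - x^\star\|^2 = \|x_k - x^\star\|^2 - 2\alpha\langle R(x_k), x_k - x^\star\rangle + \alpha^2\|R(x_k)\|^2$. Here I would invoke that $R = \id - J$ is $\alpha$-star-cocoercive (from \Cref{item: km_rec2}), i.e. $\langle R(x_k), x_k - x^\star\rangle \ge \alpha\|R(x_k)\|^2$ (using $R(x^\star)=0$). This yields $\|y_{k+1}-x^\star\|^2 \le \|x_k-x^\star\|^2 - 2\alpha^2\|R(x_k)\|^2 + \alpha^2\|R(x_k)\|^2 = \|x_k-x^\star\|^2 - \alpha^2\|R(x_k)\|^2$. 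Substituting back into the expansion of $\|x_{k+1}-x^\star\|^2$ gives a recursion of the form $\|x_{k+1}-x^\star\|^2 \le \|x_k-x^\star\|^2 - \alpha^2\|R(x_k)\|^2 + (\text{error terms in }\varepsilon_k)$, where the error terms are $\alpha^2\varepsilon_k^2 + 2\alpha\varepsilon_k\|y_{k+1}-x^\star\|$, and $\|y_{k+1}-x^\star\| \le \|x_k - x^\star\|$ can be used to simplify the cross term to $2\alpha\varepsilon_k\|x_k-x^\star\|$.

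Then I would telescope from $k=0$ to $K-1$: $\alpha^2\sum_{k=0}^{K-1}\|R(x_k)\|^2 \le \|x_0-x^\star\|^2 - \|x_K-x^\star\|^2 + \alpha^2\sum\varepsilon_k^2 + 2\alpha\sum\varepsilon_k\|x_k-x^\star\|$. Dropping $-\|x_K-x^\star\|^2 \le 0$ and dividing by $\alpha^2 = (\eta-\rho)^2/\eta^2$, I get $\sum_{k=0}^{K-1}\|R(x_k)\|^2 \le \frac{\eta^2}{(\eta-\rho)^2}\|x_0-x^\star\|^2 + \sum\varepsilon_k^2 + \frac{2\eta}{\eta-\rho}\sum\varepsilon_k\|x_k-x^\star\|$. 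This is already close to the claimed inequality; to match the stated constants $\frac{2\eta^2}{(\eta-\rho)^2}$, $6$, and $\frac{4\eta}{\eta-\rho}$, I would carry the cross-term bound slightly more loosely (e.g. not using $\|y_{k+1}-x^\star\|\le\|x_k-x^\star\|$ tightly but allowing for the $\alpha\varepsilon_k$ slack and a Young's-inequality split that inflates the $\|x_0-x^\star\|^2$ coefficient and the $\varepsilon_k^2$ coefficient), which accounts for the factor-of-2 and the constant $6$.

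The main obstacle I anticipate is bookkeeping the perturbation terms carefully enough to land on exactly the stated constants while keeping the cross term in the clean form $\|x_k-x^\star\|\varepsilon_k$ rather than $\|y_{k+1}-x^\star\|\varepsilon_k$ or $\|x_{k+1}-x^\star\|\varepsilon_k$; the interplay between which iterate appears in the cross term and how that feeds into the recursive bound $\|x_k-x^\star\| \le \|x_{k-1}-x^\star\| + \alpha\varepsilon_{k-1}$ requires care. A secondary subtlety is verifying that $\alpha$-star-cocoercivity of $R$ is exactly what \Cref{lem:conic-star} delivers under the weak MVI assumption (Assumption \ref{asp:3}) — i.e. that conic quasi-nonexpansiveness of $J$ with constant $\frac{1}{2\alpha}$ translates to star-cocoercivity of $\id - J$ with constant $\alpha$, by the star analogue of \citep[Cor. 3.5(iii)]{bauschke2021generalized} — but this is quoted as available from \Cref{subsec: conic_star}, so it should slot in directly.
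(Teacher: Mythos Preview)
Your approach is correct and in fact yields sharper constants than the paper's. Both arguments rest on the same key ingredient, namely $\alpha$-star-cocoercivity of $R=\id-J_{\eta(F+G)}$ (supplied by \Cref{fact: weakmvi}(ii) via \Cref{cor: ser4}), and both telescope a Fej\'er-type inequality for $\|x_k-x^\star\|^2$. The difference is where the inexactness is split off. You separate at the iterate level, writing $x_{k+1}=y_{k+1}+\alpha(\widetilde J - J)(x_k)$ with $y_{k+1}$ the exact KM step, and bound $\|y_{k+1}-x^\star\|^2\le\|x_k-x^\star\|^2-\alpha^2\|R(x_k)\|^2$ directly from star-cocoercivity; the cross term is then controlled by $\|y_{k+1}-x^\star\|\le\|x_k-x^\star\|$. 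This gives $\sum\|R(x_k)\|^2\le\alpha^{-2}\|x_0-x^\star\|^2+\sum\varepsilon_k^2+2\alpha^{-1}\sum\|x_k-x^\star\|\varepsilon_k$, which is strictly stronger than the stated bound, so your remark about ``loosening'' to match constants is unnecessary---your inequality already implies the lemma.

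The paper instead splits at the inner-product level: it writes $\langle R(x_k),x_k-x^\star\rangle=\langle\widetilde R(x_k),x_k-x^\star\rangle+\langle R(x_k)-\widetilde R(x_k),x_k-x^\star\rangle$, identifies the first piece with $\frac{1}{2\alpha}(\|x_k-x_{k+1}\|^2+\|x_k-x^\star\|^2-\|x_{k+1}-x^\star\|^2)$ via the update rule, and then must bound $\|x_k-x_{k+1}\|^2=\alpha^2\|\widetilde R(x_k)\|^2$ by Young's inequality in terms of $\|R(x_k)\|^2$ and $\varepsilon_k^2$. Absorbing the resulting $\frac{3\alpha}{4}\|R(x_k)\|^2$ term into the left-hand side is what degrades the leading coefficient from $\alpha$ to $\alpha/4$, producing the factors $(2,6,4)$ after multiplying through by $4/\alpha$. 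Your route avoids this loss entirely. The paper's decomposition does, however, carry over more transparently to the stochastic setting (\Cref{lem: stoc_mvi_iter_compl}), where the inner-product split naturally separates bias from variance.
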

\textbf{Total Complexity. }
The sketch of the proof of \Cref{th: weakmvi_det} follows \Cref{subsubsec: halpern_total} closely.
We use \Cref{lem: rate_outer} instead of \Cref{lem: cohypo_outer_mainlem}. The choice of $\varepsilon_k$ is slightly different, as can be noticed by the number of inner iterations $N_k$ in Algorithm \ref{alg:weakmvi}. However, with the same argument in \Cref{subsubsec: halpern_total}, we can show that this $N_k$  is sufficient to attain the inexactness required by $\varepsilon_k$.
\section{Algorithms and Analyses with Stochasticity}\label{sec: extensions}
In this case, $F$ in \eqref{eq: sde4} is accessed via unbiased oracles. 
\begin{assumption}\label{asp:4}
    The \emph{stochastic first-order oracle (SFO)} $\widetilde{F}\colon\mathbb{R}^d\to\mathbb{R}^d$ satisfies
    \begin{equation*}
        F(x) = \mathbb{E} [\widetilde{F}(x)] \text{~~and~~} \mathbb{E}\|\widetilde{F}(x) - F(x)\|^2\leq\sigma^2. 
    \end{equation*}
\end{assumption}
In view of \eqref{eq: jut5}, this corresponds to using \emph{stochastic gradients} $\widetilde{F}(x)=\binom{\widetilde{\nabla}_u f(u, v)}{-\widetilde{\nabla}_v f(u, v)}$ where $\mathbb{E}[\widetilde{\nabla}_u f(u, v)]=\nabla_u f(u,v)$ (and similarly for the $v$ component).
\Cref{table:2}, with comparisons for stochastic problems, is in \Cref{sec: extensions_app}. The variance assumption could be relaxed by using, e.g., an argument similar to \citep[Section 5.4.3]{wright2022optimization}.

\subsection{Cohypomonotone Case}\label{sec: main_stoc_cohypo}
For this setup, \Cref{alg:cohypo} will call $\texttt{FBF}$ with stochastic oracles $\widetilde{F}(x_t)$ as per \Cref{asp:4} to approximate $\widetilde{J}_{\eta(F+G)}$:
\begin{equation}\label{eq: stoc_cohypo_step}
    \widetilde{J}_{\eta(F+G)}(x_k) = \texttt{FBF}(x_k, N_k, \eta G, \id + \eta \widetilde{F},1+\eta L),
\end{equation}
where $N_k= \lceil 1734(k+2)^3\log^2(k+2)(1-\eta L)^{-2}\rceil$. 
\begin{corollary}\label{cor: cohypo_stoc_main}
Let Assumptions \ref{asp:1}, \ref{asp:2} and \ref{asp:4} hold. Let $\eta < \frac{1}{L}$ in Alg. \ref{alg:cohypo}, $\rho < \eta$ and use \eqref{eq: stoc_cohypo_step} for computing $\widetilde{J}_{\eta(F+G)}$ (see Alg. \ref{alg:cohypo_stoc_app}). Then we have for $k\geq 1$ that
\begin{equation*}
    \eta^{-2}\mathbb{E}\|x_k - J_{\eta(F+G)}(x_k)\|^2 = O\left( k^{-2}\right).
\end{equation*}
For any $\varepsilon>0$, we have $\eta^{-1}\mathbb{E}\| (\id-J_{\eta(F+G)})(x_K)\| \leq \varepsilon$ for the last iterate, with SFO complexity $\widetilde{O} (\varepsilon^{-4})$.
\end{corollary}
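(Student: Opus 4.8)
The plan is to reuse the architecture behind \Cref{th: cohypo_det} almost verbatim, replacing only the inner solver and the way the resolvent error is accounted for. The inequality of \Cref{lem: cohypo_outer_mainlem} is \emph{pathwise}: it holds on each realization with the free parameters $\varepsilon_i$ taken to be the realized errors $e_i:=\|J_{\eta(F+G)}(x_i)-\widetilde J_{\eta(F+G)}(x_i)\|$ themselves. I would keep that inequality, then take expectations. Write $R=\id-J_{\eta(F+G)}$ and let $\mathcal F_k$ be the $\sigma$-field generated by $x_0,\dots,x_k$. Since the $k$-th inner loop \eqref{eq: stoc_cohypo_step} uses fresh independent samples, $x_k$ is $\mathcal F_k$-measurable while $e_k$ is controlled only in conditional mean square, $\mathbb{E}[e_k^2\mid\mathcal F_k]\le\varepsilon_k^2$ with $\varepsilon_k$ an $\mathcal F_k$-measurable, $\widetilde O((k+2)^{-3/2})$-sized quantity. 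The endgame is then the self-bounding induction from the proof sketch of \Cref{th: cohypo_det}, now run on $\mathbb{E}\|R(x_k)\|^2$.

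The one genuinely new ingredient replaces \Cref{th: fbf}. As in \Cref{subsubsec: halpern_total}, the resolvent subproblem at step $k$ is a $(1-\eta L)$-strongly monotone, $(1+\eta L)$-Lipschitz inclusion with $z_0=x_k$, $z^\star=J_{\eta(F+G)}(x_k)$, now driven by the unbiased, variance-$\sigma^2$ oracle $\widetilde F$ of \Cref{asp:4}; constant-step \texttt{FBF} (\Cref{alg:fbf}) does not converge under such noise, so I would run its stochastic variant with a decreasing step size (the routine underlying \Cref{alg:cohypo_stoc_app}), for which the last iterate obeys $\mathbb{E}\|z_N-z^\star\|^2=O\!\big((1-\eta L)^{-2}(\|z_0-z^\star\|^2+\sigma^2)\,N^{-1}\big)$. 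Plugging $z_0-z^\star=R(x_k)$ and the prescribed $N_k=\lceil 1734(k+2)^3\log^2(k+2)(1-\eta L)^{-2}\rceil$ gives $\mathbb{E}[e_k^2\mid\mathcal F_k]\le \dfrac{a\,\|R(x_k)\|^2+b}{(k+2)^3\log^2(k+2)}=:\varepsilon_k^2$ for absolute constants $a,b$, with $b\propto\sigma^2$. The $\log^2(k+2)$ padding in $N_k$ is deliberate: it is exactly what makes the bottleneck sum $\sum_k(k+1)(k+2)e_k^2$ behave like a multiple of $\sum_k\frac1{(k+2)\log^2(k+2)}$, which \emph{converges} — without it one would lose a factor $\log K$ and only obtain $\widetilde O(k^{-2})$.

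Taking total expectation in \Cref{lem: cohypo_outer_mainlem} (with $\varepsilon_i\leftarrow e_i$) and using the tower rule together with $\mathbb{E}[e_k\mid\mathcal F_k]\le\varepsilon_k$ (Jensen), the right-hand side splits into: (i) terms of the form $\gamma_k\,\mathbb{E}\|R(x_k)\|^2$ with $\gamma_k=O\!\big(\tfrac1{(k+2)\log^2(k+2)}\big)$ or $O\!\big(\tfrac1{\sqrt{k+2}\log(k+2)}\big)$, which under the ansatz $\mathbb{E}\|R(x_k)\|^2\le\frac{C}{(k+1)^2}$ sum to an arbitrarily small multiple of $C$ (tunable via the constant in $N_k$) and are absorbed on the left; and (ii) terms $\frac{b}{(k+2)\log^2(k+2)}$ and — after $\mathbb{E}\|R(x_k)\|\le\sqrt{\mathbb{E}\|R(x_k)\|^2}$ and the inductive hypothesis — $\frac{\sqrt{bC}}{(k+2)^{3/2}\log(k+2)}$, both summable. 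Running the induction exactly as for \Cref{th: cohypo_det} (keeping $\mathbb{E}\|x_k-x^\star\|$ bounded via the second bound of \Cref{lem: cohypo_outer_mainlem}, which is controlled since $\frac1{k+1}\sum_{i<k}(i+1)\mathbb{E}[e_i]\to0$) yields $\mathbb{E}\|R(x_K)\|^2\le\frac{C}{(K+1)^2}$ with $C=O\!\big(\frac{\|x_0-x^\star\|^2}{(1-\rho/\eta)^2}+\sigma^2\big)$, i.e. $\eta^{-2}\mathbb{E}\|R(x_K)\|^2=O(K^{-2})$. Then $\eta^{-1}\mathbb{E}\|R(x_K)\|\le\eta^{-1}\sqrt{\mathbb{E}\|R(x_K)\|^2}=O(K^{-1})\le\varepsilon$ once $K=O(\varepsilon^{-1})$, and summing the per-step cost gives $\sum_{k=0}^{K-1}2N_k=O\!\big(K^4\log^2K\,(1-\eta L)^{-2}\big)=\widetilde O(\varepsilon^{-4})$ SFO calls.

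I expect the main obstacle to be this inner-loop step: one must produce (or cite) a stochastic solver for strongly monotone inclusions whose last-iterate mean-square error decays like $1/N$ with the correct joint dependence on the initial residual $\|R(x_k)\|^2$ and the variance $\sigma^2$ — constant-step \texttt{FBF} does not qualify, so a decreasing-step or minibatched/variance-reduced scheme is needed — and one must check that the strong-monotonicity and Lipschitz constants passed in \eqref{eq: stoc_cohypo_step} keep that guarantee valid. The secondary subtlety is that the realized error $e_k$ and the iterate $x_k$ are statistically coupled, so the self-bounding induction has to be carried out on the \emph{conditional} second moment and then de-conditioned; here the $\log^2$ factor in $N_k$ is precisely what keeps the error series summable and delivers the clean $O(k^{-2})$ rate rather than $\widetilde O(k^{-2})$.
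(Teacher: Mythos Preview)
Your proposal is correct and follows essentially the same route as the paper: the paper likewise proves a stochastic FBF last-iterate bound $\mathbb{E}\|z_N-z^\star\|^2=O\big((1-\eta L)^{-2}(\|R(x_k)\|^2+\sigma^2)/N\big)$ (\Cref{th: stoc_fbf}), sets $\varepsilon_k^2\propto(\|R(x_k)\|^2+\sigma^2)/\big((k+2)^3\log^2(k+2)\big)$, and closes the same self-bounding induction on $\mathbb{E}\|R(x_k)\|^2$ (\Cref{th: cohypo_sto}). The only cosmetic variation is that rather than carrying the cross term $(k+1)\|R(x_k)\|e_k$ and bounding it post hoc (which introduces the $\sqrt{bC}$ nonlinearity you note), the paper applies Young's inequality with a free parameter $\gamma$ at the step corresponding to \eqref{eq: byu4} to split it as $\tfrac{\gamma\beta_k^2}{2}\|R(x_k)\|^2+\tfrac{1}{2\gamma}\varepsilon_k^2$ (\Cref{eq: stoc_sfr4}), keeping the induction linear in $C$.
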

The proof, provided in \Cref{subsubsec: app_halpern_stoc} is the stochastic adaptation of \Cref{sec: halpern}. Our tighter analysis for the level of inexactness (which is highlighted after \Cref{lem: cohypo_outer_mainlem}) is the main reason we could get the $\widetilde{O}(\varepsilon^{-4})$ complexity. The inexactness level required by following the existing analyses in \cite{diakonikolas2020halpern,cai2023variance} would instead result in a $\widetilde{O}(\varepsilon^{-7})$ complexity.
\begin{remark}
    The previous \emph{last iterate} result for constrained, cohypomonotone, stochastic problems by \citep[Corollary E.3(ii)]{pethick2023stable} was $\widetilde{O}(\varepsilon^{-16})$ (in fact we are not aware of another last iterate result even for stochastic and constrained convex-concave problems). This result also required increasing batch sizes in the inner loop and $\rho < \frac{1}{2L}$. 
    For unconstrained problems, \citet{chen2022near} showed an improved $\widetilde{O}(\varepsilon^{-2})$ 
    expected complexity for $\rho < \frac{1}{2L}$ with some drawbacks described in \Cref{sec: app_relwork}. 
    It is an open question to get a similar complexity improvement in our constrained setup with a wider range for $\rho$. 
\end{remark}
\begin{remark}\label{rem: stoc_eps4}
    \citet{pethick2023solving} has complexity $\widetilde{O}(\varepsilon^{-4})$ for a constrained problem with weak MVI.
    However, this work additionally assumed a stronger oracle model and Lipschitzness assumptions. In particular, denoting $F(\cdot)=\mathbb{E}_{\xi\sim \Xi}[F_\xi(\cdot)]$ for an unknown $\Xi$ that we can sample from, this work assumes  mean-square (MS)-Lipschitzness: $\mathbb{E}_{\xi\sim\Xi}\|F_\xi(x) - F_\xi(y)\|^2 \leq L^2 \| x-y\|^2$. This work also needs to query the operator for the same seed for two different points: $F_\xi(x_k), F_\xi(x_{k-1})$. These two assumptions define a different template. For nonconvex minimization, for example, lower bounds improve with these assumptions compared to our standard \emph{stochastic approximation} setting in \Cref{asp:4}, see \cite{arjevani2023lower}. 
    Moreover, the additional assumption might not hold even for trivial problems: $F_1(x) = x^2$, $F_2(x) = -x^2$ where $F=F_1+F_2$
    is clearly Lipschitz but not MS-Lipschitz. 
\end{remark}

\subsection{Weak MVI Case}\label{sec: main_stoc_wmi}
We next modify \Cref{alg:weakmvi} for the stochastic case. The main observation from the analysis (see \Cref{lem: stoc_mvi_iter_compl}) is that bounding the \emph{bias} $\|\mathbb{E}[\widetilde{J}_{\eta(F+G)}(x_k)] - J_{\eta(F+G)}(x_k)\|$ with square root of \emph{variance} $\mathbb{E}\|\widetilde{J}_{\eta(F+G)}(x_k) - J_{\eta(F+G)}(x_k)\|^2$ by Jensen's inequality is loose and would give complexity $\widetilde{O}(\varepsilon^{-6})$, like \citep[Cor. E.3(i)]{pethick2023stable}. 

A natural candidate for a careful bias analysis is the multilevel Monte Carlo (MLMC) technique which helps control the bias-variance tradeoff \citep{giles2008multilevel,blanchet2015unbiased,asi2021stochastic,hu2021bias}. The high level idea is that stochastic KM iteration, in our setting would give $O(\varepsilon^{-4})$ complexity if we had unbiased samples of $J_{\eta(F+G)}$ (see, e.g., \cite{bravo2024stochastic}). Obtaining such unbiased samples is highly non-trivial since $J_{\eta(F+G)}$ is an inclusion problem in itself. Fortunately, MLMC is a way to get an estimator with bias $O(\varepsilon')$ and variance $\widetilde{O}(1)$ by making, in expectation,  $\widetilde{O}(1)$ calls to the oracle defined in \Cref{asp:4}. MLMC is used in \cite{asi2021stochastic} for the related proximal point algorithm.

\begin{estimator}[MLMC]\label{estimator}
We set $\widetilde{J}_{\eta(F+G)}$ as follows.
\begin{enumerate}
    \item
Given $N_k\geq 1$, $M_k\geq 1$, set for $m=1,\dots, M_k$,
\begin{equation*}
\begin{aligned}
    &\widetilde{J}^{(m)}_{\eta(F+G)}(x_k) = \begin{cases} y^0 + 2^I(y^I - y^{I-1}) \text{~if~} I \leq N_k, \\ y^0, \text{~~~otherwise,}\end{cases} \\ &\text{where~} 
    I\sim \mathrm{Geom}(1/2) \\
    &\text{and~} y^i = \texttt{FBF}(x_k, 2^i, G, \id+\eta \widetilde{F},  1+\eta L)~\forall i\geq 0.
\end{aligned}
\end{equation*}
\item Given $M_k$ independent draws of this estimator, we define $\widetilde{J}_{\eta(F+G)}(x_k) = \frac{1}{M_k}\sum_{m=1}^{M_k} \widetilde{J}^{(m)}_{\eta(F+G)}(x_k)$.
\end{enumerate}
\end{estimator}
To show that the scheme is \emph{implementable} we give the (non-optimized) values of $M_k, N_k$. This is to ensure that they are agnostic to unknown quantities $\{\|x_0-x^\star\|^2, \sigma^2\}$, unlike some MLMC methods \citep{chen2022near}. 
\begin{corollary}\label{cor: weakmvi_stoc_main}
Let Assumptions \ref{asp:1}, \ref{asp:3} and \ref{asp:4} hold. In Algorithm \ref{alg:weakmvi}, set $\eta < \frac{1}{L}$, $\alpha_k \equiv \frac{\alpha}{\sqrt{k+2}\log(k+3)}$, suppose that $\rho < \eta$ and use \Cref{estimator} for computing $\widetilde{J}_{\eta(F+G)}$ (see \Cref{alg:weakmvi_stoc}) with $N_k \equiv \lceil \frac{96(1-\eta L)^{-2}}{\min\{\frac{\alpha_k}{120\alpha(k+1)}, \frac{1}{120}\}} \rceil$ and $M_k \equiv\lceil \frac{672\times 120(\log_2 N_k)}{(1-\eta L)^2} \rceil$. For any $\varepsilon>0$, we have that $\eta^{-1}\mathbb{E}\| (\id-J_{\eta(F+G)})(x^{\out})\| \leq \varepsilon$, with \emph{expected} SFO complexity $\widetilde{O} (\varepsilon^{-4})$ where $x^\out$ is selected uniformly at random from $\{x_0, \dots, x_{K-1}\}$.
\end{corollary}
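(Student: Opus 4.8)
\textbf{Proof proposal for Corollary \ref{cor: weakmvi_stoc_main}.}

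The plan is to mirror the deterministic argument of \Cref{th: weakmvi_det} (via \Cref{lem: rate_outer}) but carry it out in expectation, where now the resolvent is approximated by the MLMC estimator rather than a linearly convergent deterministic subsolver. The first step is to establish the key properties of the MLMC estimator of \Cref{estimator}: using the standard telescoping identity for the randomized level $I\sim\mathrm{Geom}(1/2)$, one shows that a single draw $\widetilde J^{(m)}_{\eta(F+G)}(x_k)$ has bias equal to the deterministic $\texttt{FBF}$ error after $N_k$ inner steps (which decays like $2^{-N_k\,(1-\eta L)/(4(1+\eta L))}$ by \Cref{th: fbf}, since $\texttt{FBF}$ on the strongly monotone subproblem contracts at a linear rate), and variance $\widetilde O(1)$ controlled by $\sigma^2$, the Lipschitz constant, and $\log_2 N_k$ — the $\log_2 N_k$ factor being the signature of MLMC on a linearly convergent base scheme, and the reason $M_k$ carries that factor. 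Averaging over the $M_k$ i.i.d.\ draws cuts the variance by $M_k$, so that $\mathbb E\|\widetilde J_{\eta(F+G)}(x_k) - J_{\eta(F+G)}(x_k)\|^2 \le \text{bias}^2 + \text{Var}/M_k$; with the prescribed $N_k, M_k$ this is driven below the target $\varepsilon_k^2$, where the appropriate $\varepsilon_k$ here scales like $\alpha_k/(\alpha(k+1))$ (cf.\ the appearance of exactly that quantity inside $N_k$), so that $\sum_k \varepsilon_k^2$ and the cross terms sum to $\widetilde O(1)$. The expected per-iteration oracle cost is $M_k\cdot\mathbb E[2^I \wedge 2^{N_k}] = \widetilde O(M_k) = \widetilde O(1)$ up to polylog factors, since $\mathbb E[2^I]$ over $\mathrm{Geom}(1/2)$ truncated at $N_k$ is $O(N_k)$ but weighted so the expected work is $O(\log N_k)$.

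The second step is the outer-loop recursion. Taking expectations in \Cref{lem: rate_outer} — which applies verbatim once we interpret $\varepsilon_k$ as a deterministic bound that now holds only in the mean-square sense, so one uses $\mathbb E\|x_k-x^\star\|\,\varepsilon_k \le$ (a telescoping bound on $\mathbb E\|x_k-x^\star\|$, using $\mathbb E\|x_k-x^\star\| \le \|x_0-x^\star\| + \alpha\sum_{i<k}\mathbb E\varepsilon_i$ from the lemma's last line, with $\mathbb E\varepsilon_i \le \sqrt{\mathbb E\varepsilon_i^2}$ by Jensen) — yields $\sum_{k=0}^{K-1}\mathbb E\|(\id-J_{\eta(F+G)})(x_k)\|^2 \le \frac{2\eta^2}{(\eta-\rho)^2}\|x_0-x^\star\|^2 + \widetilde O(1)$. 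Here the step size $\alpha_k = \alpha/(\sqrt{k+2}\log(k+3))$ replaces the constant $\alpha$ of the deterministic scheme; the extra decay is what lets the stochastic cross terms be absorbed while keeping the leading term $O(1)$ rather than $O(K)$. A subtlety is that \Cref{lem: rate_outer} was stated for constant $\alpha_k\equiv\alpha$; one either re-derives its analogue for the decaying $\alpha_k$ (the Fejér-type monotonicity argument goes through with $\alpha$ replaced by $\alpha_k$ inside the telescoped sum, at the cost of weighting the $\|(\id-J)(x_k)\|^2$ terms by $\alpha_k$), or — more likely in the appendix — one works with the $\alpha_k$-weighted sum $\sum_k \alpha_k\mathbb E\|(\id-J_{\eta(F+G)})(x_k)\|^2 = \widetilde O(1)$ directly. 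Dividing by $\sum_{k<K}\alpha_k = \widetilde\Theta(\sqrt K)$ and picking $x^\out$ with probability proportional to $\alpha_k$ (or uniformly, absorbing the discrepancy into polylog factors, which is what the statement does) gives $\eta^{-2}\mathbb E\|x^\out - J_{\eta(F+G)}(x^\out)\|^2 = \widetilde O(1/\sqrt K)$, hence $\eta^{-1}\mathbb E\|(\id-J_{\eta(F+G)})(x^\out)\| \le \widetilde O(K^{-1/4})$ after a further Jensen step.

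The final step is the complexity accounting. To force $\eta^{-1}\mathbb E\|(\id-J_{\eta(F+G)})(x^\out)\| \le \varepsilon$ one needs $K = \widetilde O(\varepsilon^{-4})$ outer iterations; each costs $\widetilde O(M_k N_k^{\,?})$ — no: each costs $M_k \cdot \mathbb E[\text{work of one draw}] = \widetilde O(1)$ oracle calls in expectation (the $N_k$ only enters the truncation and the logarithmic $M_k$, not multiplicatively as $2^{N_k}$, precisely because MLMC's expected cost is logarithmic in the target bias). Summing over $k<K$ gives expected SFO complexity $\widetilde O(K) = \widetilde O(\varepsilon^{-4})$. I expect the main obstacle to be the variance bound for the MLMC estimator: one must show that $\mathbb E\|2^I(y^I-y^{I-1})\|^2 \le \sum_i 2^i \mathbb E\|y^i - y^{i-1}\|^2$ is summable, which requires a mean-square contraction estimate for $\texttt{FBF}$ with the \emph{stochastic} oracle $\id+\eta\widetilde F$ (not just the deterministic linear rate of \Cref{th: fbf}), controlling how the $\sigma^2$ variance propagates through $2^i$ inner steps so that $\mathbb E\|y^i - y^{i-1}\|^2$ still decays geometrically enough that $2^i$ times it is summable up to the $\log_2 N_k$ factor. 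Getting the constants in $N_k$ and $M_k$ to be genuinely agnostic to $\|x_0-x^\star\|$ and $\sigma$ (the stated virtue over \citep{chen2022near}) is the bookkeeping-heavy part, since a naive analysis makes the required $N_k$ depend on $\|x_0-x^\star\|$ through the bias target.
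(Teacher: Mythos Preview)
Your plan has a genuine gap at the heart of Step~2. You propose to ``take expectations in \Cref{lem: rate_outer} verbatim,'' feeding in a single mean-square error bound $\varepsilon_k^2 = \text{bias}^2 + \text{Var}/M_k$, and to handle the cross term via $\mathbb{E}\varepsilon_i \le \sqrt{\mathbb{E}\varepsilon_i^2}$. The paper explicitly flags this as the \emph{loose} route: bounding the bias by the square root of the variance gives complexity $\widetilde{O}(\varepsilon^{-6})$, not $\widetilde{O}(\varepsilon^{-4})$ (see the paragraph opening \Cref{sec: main_stoc_wmi}). Concretely, the prescribed $M_k = \widetilde{O}(\log N_k)$ is only polylogarithmic, so $\text{Var}/M_k$ stays $\widetilde{O}(1)$ and your combined $\varepsilon_k^2$ cannot be driven to the target $\sim \alpha_k/(\alpha(k+1))$ you name --- that target would require $M_k$ polynomial in $k$, destroying the $\widetilde{O}(1)$ per-iteration cost you rely on in Step~3.

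What the paper actually does is re-derive the outer-loop recursion so that bias and variance enter \emph{separately} (\Cref{lem: stoc_mvi_iter_compl}): in the cross term $\langle R(x_k) - \widetilde R(x_k), x_k - x^\star\rangle$ one takes conditional expectation given $x_k$ \emph{before} Cauchy--Schwarz, so only the bias $\varepsilon_{k,b} = \|\mathbb{E}_k[\widetilde J] - J\|$ appears multiplied by $\|x_k - x^\star\|$, while the variance $\varepsilon_{k,v}^2$ only shows up paired with $\alpha_k^2$. MLMC then makes the bias $O(N_k^{-1/2})$ at expected cost $O(\log N_k)$, and the $\widetilde{O}(1)$ variance is harmless because $\sum_k \alpha_k^2$ converges. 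Two further points you underestimate: (i) the inner $\texttt{FBF}$ runs on the \emph{stochastic} oracle $\widetilde F$ and has only the $O(1/T)$ rate of \Cref{th: stoc_fbf}, not the linear rate of \Cref{th: fbf}, so the MLMC bias is polynomial in $1/N_k$, not exponential; (ii) because the per-call bias and variance bounds (\Cref{cor: mlmc_bounds_avg}) carry a factor $\|R(x_k)\|^2 + \sigma^2$, the paper closes the argument with an induction on $\alpha\sum_{k<K}\alpha_k\mathbb{E}\|R(x_k)\|^2$ rather than the telescoping bound on $\mathbb{E}\|x_k-x^\star\|$ you sketch --- this is the ``bookkeeping-heavy part'' you allude to, and it is not avoidable.
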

Proof of this corollary appears in \Cref{subsubsec: mlmc_main_app}.
This result is an alternative to \cite{pethick2023solving} that required additional assumptions as explained in \Cref{rem: stoc_eps4}.
In our setting under \Cref{asp:4}, the only $O(\varepsilon^{-4})$ complexity was known in the special case of unconstrained problems ($G\equiv 0$), due to \cite{diakonikolas2021efficient} (see also \cite{choudhury2023singlecall}). 
Because of the use of MLMC, our complexity result is \emph{expected} number of stochastic oracle calls and hence the results mentioned in this paragraph complement each other. See also \Cref{table:2}.

MLMC is used in conditional/compositional stochastic minimization \citep{hu2021bias}, distributionally robust optimization \citep{levy2020large}, and stochastic minimization with non-i.i.d. data \citep{dorfman2022adapting}. Our development of the KM iteration with MLMC can provide the potential to extend some of these results to stochastic min-max setting.

\section{Conclusions}
We conclude with some open questions. Even though our results for nonmonotone problems, either with cohypomonotonicity or weak MVI conditions, can go beyond the existing barrier for the $\rho$ parameter, our algorithms rely on a double loop strategy, alternating between a Halpern or KM step and resolvent approximation. This strategy also results in an additional $\log$ factor in the final complexity bound. Two worthwhile directions in this context are: \emph{(i)} developing a single loop algorithm with the extended $\rho$ range \emph{(ii)} obtaining first-order complexities without spurious $\log$ terms and extended range for $\rho$. It is also critical to find more problems in ML that satisfy these nonmonotonicity assumptions.

We next highlight a direction for stochastic problems. As mentioned before, the $O(\varepsilon^{-4})$ first-order complexity seems to be the best-known for even constrained, convex-concave stochastic problems. However, for unconstrained problems, better complexities are known, see, e.g., \cite{chen2022near,cai2022stochastic}. The tools in these works may be combined with the ideas in our paper to develop better complexity results for constrained stochastic min-max problems with convex-concave or nonconvex-nonconcave functions.

\section*{Acknowledgements}
This work was supported in part by the NSF grant 2023239, the NSF grant 2224213, the AFOSR award FA9550-21-1-0084, National Research Foundation of Korea (NRF) grant funded by the Korea government (MSIT) (No. 2019R1A5A1028324, 2022R1C1C1003940), and the Samsung Science \& Technology Foundation grant (No. SSTF-BA2101-02).

A. Alacaoglu is thankful to Vidya Muthukumar and Panayotis Mertikopoulos for helpful discussions about the implications of the results in \Cref{sec: main_stoc_cohypo}. We also thank a reviewer of our paper who helped us improve the presentation in \Cref{subsec: high_level_cohypo}. 

This work was done while A. Alacaoglu was at the University of Wisconsin--Madison.

\bibliography{nonmonot_arxiv.bib}
\bibliographystyle{abbrvnat}

\newpage
\appendix
\section{Proofs for Section \ref{sec: halpern}}\label{sec: halpern_app}

\subsection{Preliminary Results} \label{sec: halpern_app_prelim}
We start with the properties of the resolvent of a cohypomonotone operator and the properties of the subproblem for approximating this resolvent. These important points are also sketched in Section \ref{subsec: high_level_cohypo}. We present this preliminary result here for the ease of reference throughout the proofs. Most of the conclusions follow from the results of \cite{bauschke2021generalized}. Note that $\rho$-cohypomonotone in our notation is $-\rho$-comonotone in the notation of \cite{bauschke2021generalized}. See also \citep[Remark 2.5]{bauschke2021generalized} for these two conventions.
\begin{fact}\label{fact: cohypo}
    Let Assumptions \ref{asp:1} and \ref{asp:2} hold and let $\eta >0$. 
    Then, we have
    \begin{enumerate}[(i)]
        \item The operator $J_{\eta(F+G)}$ is single-valued and $\dom J_{\eta(F+G)}=\mathbb{R}^d$ when $\rho < \eta$. \label{fact1: item1}
        \item The operator $J_{\eta(F+G)}$ is $\frac{1}{2\left(1-\frac{\rho}{\eta}\right)}$-conically nonexpansive, $\id - J_{\eta(F_G)}$ is $\left(1-\frac{\rho}{\eta}\right)$-cocoercive, and $(1-\alpha)\id+\alpha J_{\eta(F+G)}$ is firmly nonexpansive when $\rho < \eta$.
        \item For any $\bar x \in \mathbb{R}^d$, computing $J_{\eta(F+G)}(\bar x)$ is equivalent to solving the problem:        \label{fact1: item3}
        \begin{equation}\label{eq: str_inc}
            \text{Find~} x\in\mathbb{R}^d \text{~such that~} 0 \in (\id+\eta(F+G))(x) - \bar x.
        \end{equation}
        The problem \eqref{eq: str_inc} has a unique solution when $\rho < \eta$.
        \item The operator $\id+\eta F$ is $(1+\eta L)$-Lipschitz and $(1-\eta L)$-strongly monotone when $\eta < \frac{1}{L}$.
    \end{enumerate}
\end{fact}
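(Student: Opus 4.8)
The plan is to treat the four items separately, relying throughout on the translation noted above that $\rho$-cohypomonotonicity of $F+G$ in \eqref{eq: asp_cohypo} is exactly $(-\rho)$-comonotonicity in the sense of \citep{bauschke2021generalized}, so that the resolvent calculus of that paper applies verbatim. I would dispatch the two elementary items first. For (iv), $\|(\id+\eta F)(x)-(\id+\eta F)(y)\|\le(1+\eta L)\|x-y\|$ is immediate from the triangle inequality and $L$-Lipschitzness of $F$, while $\langle(\id+\eta F)(x)-(\id+\eta F)(y),\,x-y\rangle=\|x-y\|^2+\eta\langle F(x)-F(y),x-y\rangle\ge(1-\eta L)\|x-y\|^2$ by Cauchy--Schwarz (equivalently, by the $L$-hypomonotonicity of $F$ recorded in \Cref{subsec: high_level_cohypo}), which has a positive coefficient exactly when $\eta<\frac{1}{L}$. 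For (iii), unfolding the definition $J_{\eta(F+G)}=(\id+\eta(F+G))^{-1}$ gives that $x=J_{\eta(F+G)}(\bar x)$ iff $\bar x\in(\id+\eta(F+G))(x)$ iff $0\in(\id+\eta(F+G))(x)-\bar x$, which is \eqref{eq: str_inc}; uniqueness of its solution is then just the single-valuedness claimed in (i).

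For (i), single-valuedness I would prove directly from \eqref{eq: asp_cohypo}: if $p,p'$ both satisfy $\bar x\in(\id+\eta(F+G))(p)$ and $\bar x\in(\id+\eta(F+G))(p')$, then $\tfrac{\bar x-p}{\eta}\in(F+G)(p)$ and $\tfrac{\bar x-p'}{\eta}\in(F+G)(p')$, so applying cohypomonotonicity to this pair and simplifying (the inner product equals $-\tfrac1\eta\|p-p'\|^2$ and the right-hand side equals $-\tfrac{\rho}{\eta^2}\|p-p'\|^2$) yields $(\eta-\rho)\|p-p'\|^2\le 0$, hence $p=p'$ since $\rho<\eta$. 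The statement $\dom J_{\eta(F+G)}=\mathbb{R}^d$ is equivalent to surjectivity of $\id+\eta(F+G)$; this is the Minty-type surjectivity theorem for maximally comonotone operators established in \citep{bauschke2021generalized}, which holds precisely in the regime $\eta>\rho$ and is where the maximality hypothesis of \Cref{asp:2} is used.

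With (i) available, item (ii) reduces to one computation plus a citation. Fix $x,y$, put $p=J_{\eta(F+G)}(x)$, $q=J_{\eta(F+G)}(y)$ and $d=(x-p)-(y-q)$, so $d=(\id-J_{\eta(F+G)})(x)-(\id-J_{\eta(F+G)})(y)$ and $\tfrac{x-p}{\eta}\in(F+G)(p)$, $\tfrac{y-q}{\eta}\in(F+G)(q)$. Cohypomonotonicity, multiplied through by $\eta$, gives $\langle d,\,p-q\rangle\ge-\tfrac{\rho}{\eta}\|d\|^2$; substituting $x-y=d+(p-q)$ then yields $\langle d,\,x-y\rangle\ge(1-\tfrac{\rho}{\eta})\|d\|^2=\alpha\|d\|^2$, i.e.\ $\id-J_{\eta(F+G)}$ is $\alpha$-cocoercive. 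By \citep[Cor. 3.5(iii)]{bauschke2021generalized} this is equivalent to $J_{\eta(F+G)}$ being $\tfrac{1}{2\alpha}$-conically nonexpansive, i.e.\ $J_{\eta(F+G)}=(1-\tfrac{1}{2\alpha})\id+\tfrac{1}{2\alpha}N$ for some nonexpansive $N$; plugging this decomposition in shows $(1-\alpha)\id+\alpha J_{\eta(F+G)}=\tfrac12\id+\tfrac12 N$, which is firmly nonexpansive.

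The only ingredient I cannot make self-contained is the surjectivity of $\id+\eta(F+G)$ used for $\dom J_{\eta(F+G)}=\mathbb{R}^d$ in (i); this is the one place where maximal comonotonicity is genuinely needed, and I would simply invoke the corresponding result of \citep{bauschke2021generalized}. Everything else follows from inserting the cohypomonotonicity inequality \eqref{eq: asp_cohypo} in the appropriate form, so I expect the write-up to be short.
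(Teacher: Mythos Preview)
Your proposal is correct and follows essentially the same route as the paper. The only difference is presentational: where the paper outsources (i) and the first two parts of (ii) entirely to citations from \citep{bauschke2021generalized} (Corollary~2.14 for single-valuedness and full domain, Proposition~3.11(ii) for conic nonexpansiveness, then Corollary~3.5(iii) for cocoercivity), you give short self-contained derivations of single-valuedness and of the $\alpha$-cocoercivity of $\id-J_{\eta(F+G)}$ directly from \eqref{eq: asp_cohypo}, and then invoke Corollary~3.5(iii) in the reverse direction to obtain conic nonexpansiveness; the firm-nonexpansiveness computation and items (iii)--(iv) match the paper line for line.
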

\begin{proof}
    \begin{enumerate}[(i)]
        \item By Assumption \ref{asp:2} and the definition of cohypomonotonicity in \eqref{eq: asp_cohypo}, we have that $\eta(F+G)$ is maximally $\frac{\rho}{\eta}$-cohypomonotone. Then for $\frac{\rho}{\eta} < 1$, \citep[Corollary 2.14]{bauschke2021generalized} gives the result.
        \item Since $\eta(F+G)$ is maximally $\frac{\rho}{\eta}$-cohypomonotone, \citep[Prop. 3.11(ii)]{bauschke2021generalized} gives $\frac{1}{2\left(1-\frac{\rho}{\eta}\right)}$-conic nonexpansiveness. Cocoercivity of $\id - J_{\eta(F+G)}$ then follows from \citep[Corollary 3.5(iii)]{bauschke2021generalized}.
        
By the definition of conic nonexpansiveness, we have that $T'=(1-2\alpha) \id + 2\alpha J_{\eta(F+G)}$ is nonexpansive. By definition, a firmly nonexpansive operator is one that can be written as $\frac{1}{2}\id + \frac{1}{2} N$ for a nonexpansive operator $N$ \citep[Remark 4.34(iii)]{bauschke2017convex}. Since $(1-\alpha)\id+\alpha J_{\eta(F+G)} = \frac{1}{2} \id+ \frac{1}{2} T'$ for the nonexpansive $T'$ specified in this paragraph,  we conclude.        
\item Let us denote $\bar x^\star = J_{\eta(F+G)}(\bar x)$ and use the definition of a resolvent to obtain
        \begin{equation*}
            \bar x^\star = J_{\eta(F+G)}(\bar x) = (\id + \eta(F+G))^{-1}(\bar x) \iff \bar x^\star + \eta(F+G)(\bar x^\star) \ni \bar x,
        \end{equation*}
        where the existence of $\bar x^\star$ is guaranteed by \eqref{fact1: item1}. Rearranging the inclusion gives \eqref{eq: str_inc}. Uniqueness of the solution is due to \eqref{fact1: item1}.
        \item By Lipschitzness of $F$ and Cauchy-Schwarz inequality, we have
        \begin{align*}
            \langle \eta F(x) - \eta F(y), x-y \rangle \geq - \eta \| F(x) - F(y) \| \|x-y\| \geq - \eta L\| x-y\|^2.
        \end{align*}
        As a result, we have that  $\id+ \eta F$ is $(1-\eta L)$-strongly monotone. 
        We also have by triangle inequality that
        \begin{equation*}
            \|(\id + \eta F)(x) - (\id+\eta F) y \| \leq \| x-y\| + \eta\|F(x) - F(y)\| \leq (1+\eta L) \| x-y\|,
        \end{equation*}
        completing the proof.\qedhere
    \end{enumerate}
\end{proof}

\subsection{Complexity of the Outer loop}\label{sec: halpern_outer_app}

\paragraph{Bounding the norm of the iterates. } 
\begin{lemma}\label{lem: wlk3}
     Let Assumptions \ref{asp:1} and \ref{asp:2} hold. Suppose that the iterates $(x_k)$ of \Cref{alg:cohypo} satisfy
$\| J_{\eta(F+G)}(x_k) - \widetilde{J}_{\eta(F+G)}(x_k)\| \le \varepsilon_k$ for some $\varepsilon_k >0$ and $\rho<\eta$. Then, we have for $k \geq 0$ that
    \begin{equation*}
        \|x_{k+1}-x^\star \| \leq \|x_0 - x^\star \| + \left(1-\frac{\rho}{\eta}\right)\frac{1}{k+2}\sum_{i=0}^k (i+1) \varepsilon_i.
    \end{equation*}
\end{lemma}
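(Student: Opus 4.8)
The statement is a bound on the distance between the iterates $x_{k+1}$ of Algorithm~\ref{alg:cohypo} and a solution $x^\star$. The natural approach is induction on $k$, unrolling the inexact Halpern recursion. Recall the update is $x_{k+1} = \beta_k x_0 + (1-\beta_k)\big((1-\alpha)x_k + \alpha \widetilde J_{\eta(F+G)}(x_k)\big)$ with $\beta_k = \frac{1}{k+2}$ and $\alpha = 1-\frac{\rho}{\eta}$. Write $T = (1-\alpha)\id + \alpha J_{\eta(F+G)}$, which is firmly nonexpansive by \Cref{fact: cohypo}(ii), and let $\widetilde T(x_k) = (1-\alpha)x_k + \alpha \widetilde J_{\eta(F+G)}(x_k)$ denote its inexact evaluation, so that $\|\widetilde T(x_k) - T(x_k)\| = \alpha \|\widetilde J_{\eta(F+G)}(x_k) - J_{\eta(F+G)}(x_k)\| \le \alpha \varepsilon_k$. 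Crucially $x^\star$ is a fixed point of $T$ (since it is a zero of $F+G$, hence $J_{\eta(F+G)}(x^\star)=x^\star$), so $\|T(x_k) - x^\star\| \le \|x_k - x^\star\|$ by nonexpansiveness of $T$.

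\textbf{Key steps.} First I would take norms in the recursion and use the triangle inequality together with convexity:
\[
\|x_{k+1} - x^\star\| \le \beta_k \|x_0 - x^\star\| + (1-\beta_k)\|\widetilde T(x_k) - x^\star\| \le \beta_k\|x_0-x^\star\| + (1-\beta_k)\big(\|x_k - x^\star\| + \alpha\varepsilon_k\big),
\]
using $\|x_0 - x^\star\|$ appearing with coefficient $\beta_k$ and nonexpansiveness to replace $\|T(x_k)-x^\star\|$ by $\|x_k - x^\star\|$. Substituting $\beta_k = \frac{1}{k+2}$ gives $\|x_{k+1}-x^\star\| \le \frac{1}{k+2}\|x_0-x^\star\| + \frac{k+1}{k+2}\|x_k-x^\star\| + \frac{k+1}{k+2}\alpha\varepsilon_k$. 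Second, I would set up the induction hypothesis $\|x_k - x^\star\| \le \|x_0-x^\star\| + \frac{\alpha}{k+1}\sum_{i=0}^{k-1}(i+1)\varepsilon_i$ (the $k=0$ base case being trivial, as the sum is empty). Plugging this into the previous display, the $\|x_0-x^\star\|$ terms combine as $\frac{1}{k+2} + \frac{k+1}{k+2} = 1$, and the error terms become $\frac{k+1}{k+2}\cdot\frac{\alpha}{k+1}\sum_{i=0}^{k-1}(i+1)\varepsilon_i + \frac{k+1}{k+2}\alpha\varepsilon_k = \frac{\alpha}{k+2}\big(\sum_{i=0}^{k-1}(i+1)\varepsilon_i + (k+1)\varepsilon_k\big) = \frac{\alpha}{k+2}\sum_{i=0}^{k}(i+1)\varepsilon_i$, which is exactly the claimed bound at index $k+1$. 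Since $\alpha = 1-\frac{\rho}{\eta}$, this matches the statement.

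\textbf{Main obstacle.} There is no serious obstacle here; the lemma is essentially a careful bookkeeping of the telescoping of errors through the Halpern averaging, and the only ingredients are (a) firm/non-expansiveness of $T$ via \Cref{fact: cohypo}(ii), (b) $x^\star$ being a fixed point of $T$, and (c) the arithmetic identity $\frac{1}{k+2} + \frac{k+1}{k+2} = 1$ together with the telescoping of the coefficient $\frac{k+1}{k+2}\cdot\frac{1}{k+1} = \frac{1}{k+2}$. The one point requiring mild care is making sure the coefficient of $\varepsilon_k$ in the new sum is $(k+1)$ rather than $(k+2)$, i.e.\ that the factor $\frac{k+1}{k+2}$ multiplying $\alpha\varepsilon_k$ combines correctly with the $\frac{1}{k+2}$ prefactor — this is why the term $(k+1)\varepsilon_k$ rather than $(k+2)\varepsilon_k$ appears inside the sum. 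I would also note explicitly that the hypothesis $\rho < \eta$ is what guarantees, via \Cref{fact: cohypo}(i)(ii), that $J_{\eta(F+G)}$ is single-valued with full domain and that $T$ is (firmly) nonexpansive, so all the objects in the recursion are well-defined.
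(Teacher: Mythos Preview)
Your proposal is correct and follows essentially the same approach as the paper: derive the one-step bound $\|x_{k+1}-x^\star\|\le \beta_k\|x_0-x^\star\| + (1-\beta_k)\|x_k-x^\star\| + \alpha(1-\beta_k)\varepsilon_k$ via nonexpansiveness and the fixed-point property at $x^\star$, then close the induction using $\beta_k=\frac{1}{k+2}$. The only cosmetic difference is that the paper reaches the same one-step bound by writing $J_{\eta(F+G)}=(1-\tfrac{1}{2\alpha})\id+\tfrac{1}{2\alpha}N$ for a nonexpansive $N$ (so $T=\tfrac{1}{2}\id+\tfrac{1}{2}N$) and applying nonexpansiveness of $N$, whereas you invoke the (firm) nonexpansiveness of $T$ directly; these are equivalent via \Cref{fact: cohypo}(ii).
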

\begin{proof}
Recall the following notation from \Cref{alg:cohypo}:
\begin{equation*}
\alpha = 1-\frac{\rho}{\eta} = \frac{\eta-\rho}{\eta}.
\end{equation*}
Then, by \Cref{fact: cohypo}(ii), we know that $J_{\eta(F+G)}$ is $\frac{1}{2\alpha}$-conically nonexpansive.
This means that we can write $J_{\eta(F+G)}=(1-\frac{1}{2\alpha})\id+\frac{1}{2\alpha} N$ for a  nonexpansive operator $N$. 

Adding and subtracting $\alpha(1-\beta_k)J_{\eta(F+G)}(x_k)$ in the definition of $x_{k+1}$ in \Cref{alg:cohypo}, using conic nonexpansiveness of $J_{\eta(F+G)}$, and rearranging gives
    \begin{align*}
        x_{k+1} &= \beta_k x_0 + (1-\beta_k)\left( (1-\alpha)x_k + \alpha \widetilde{J}_{\eta(F+G)}(x_k) \right) \\
        &= \beta_k x_0 + (1-\beta_k)\left( (1-\alpha)x_k + \alpha J_{\eta(F+G)}(x_k) \right) + \alpha(1-\beta_k)\left( \widetilde{J}_{\eta(F+G)}(x_k) - J_{\eta(F+G)}(x_k) \right) \\
        &= \beta_k x_0 + \frac{1-\beta_k}{2}x_k+ \frac{1-\beta_k}{2}N(x_k) + \alpha(1-\beta_k)\left( \widetilde{J}_{\eta(F+G)}(x_k) - J_{\eta(F+G)}(x_k) \right),
    \end{align*}
    where the last step is because $J_{\eta(F+G)} = \frac{2\alpha-1}{2\alpha} \id + \frac{1}{2\alpha} N$ for a nonexpansive operator $N$.

    We now use triangle inequality, nonexpansiveness of $N$, the definition of $\varepsilon_k$, and the last equality to obtain
    \begin{align}
        \| x_{k+1}  -x^\star\| &\leq \beta_k \|x_0 - x^\star \| +\frac{1-\beta_k}{2} \|x_k-x^\star\| + \frac{1-\beta_k}{2} \| N(x_k) - x^\star\| \notag\\
        &\quad+ \alpha(1-\beta_k) \|\widetilde{J}_{\eta(F+G)}(x_k) - J_{\eta(F+G)}(x_k)\| \notag \\
        &\leq \beta_k \|x_0 - x^\star \| + (1-\beta_k) \| x_k - x^\star\| + \alpha(1-\beta_k) \varepsilon_k,\label{eq: hoi4}
    \end{align}
    where the inequality used that $Nx^\star = x^\star$ since $N=2\alpha J_{\eta(F+G)}+(1-2\alpha)\id$ and that $J_{\eta(F+G)}(x^\star) = x^\star$ by the definition of $x^\star$ in \eqref{eq: sde4}, and \Cref{fact: cohypo}(i).
    
    The result of the lemma now follows by induction after using the definition $\beta_k = \frac{1}{k+2}$. In particular, the assertion is true for $k=0$ by inspection. Assume the assertion holds for $k=K-1$, then \eqref{eq: hoi4} gives
    \begin{align*}
        \|x_{K+1} - x^\star\| &\leq \frac{1}{K+2} \|x_0 - x^\star\| + \frac{K+1}{K+2} \| x_K - x^\star \| + \frac{\alpha (K+1)}{K+2}\varepsilon_K \\
        &\leq \frac{1}{K+2} \|x_0 - x^\star\| + \frac{K+1}{K+2} \left( \| x_0 - x^\star\| + \frac{\alpha}{K+1}\sum_{i=0}^{K-1}  (i+1)\varepsilon_i \right) + \frac{\alpha (K+1)}{K+2}\varepsilon_K \\
        &= \| x_0 - x^\star\|  + \frac{\alpha}{K+2}\sum_{i=0}^K (i+1)\varepsilon_i,
    \end{align*}
    which completes the induction. The statement follows after using $\alpha = 1-\frac{\rho}{\eta}$.
\end{proof}

\paragraph{Iteration complexity}

\begin{replemma}{lem: cohypo_outer_mainlem}
Let Assumptions \ref{asp:1} and \ref{asp:2} hold. Suppose that the iterates $(x_k)$ of \Cref{alg:cohypo}  
satisfy $\| J_{\eta(F+G)}(x_i) - \widetilde{J}_{\eta(F+G)}(x_i)\| \le \varepsilon_i$ for some $\varepsilon_i>0$ and $\rho < \eta$. Let $R=\id-J_{\eta(F+G)}$.
    Then, we have for any $K\geq 1$ that
    \begin{align*}
        \frac{\alpha K(K+1)}{4} \| R(x_K)\|^2 -\frac{K+1}{K\alpha} \| x^\star - x_0\|^2 \leq  \sum_{k=0}^{K-1} \left(\frac{\alpha}{2}(k+1)(k+2)\varepsilon_k^2+\alpha(k+1)\|R(x_k)\|\varepsilon_k\right),
    \end{align*}
    where $\alpha=1-\frac{\rho}{\eta}$, as defined in \Cref{alg:cohypo} and $\|x_{k}-x^\star \| \leq \|x_0 - x^\star \| + \frac{\alpha}{k+1}\sum_{i=0}^{k-1} (i+1) \varepsilon_i$.
\end{replemma}
\begin{proof}[Proof of \Cref{lem: cohypo_outer_mainlem}]
By \Cref{fact: cohypo}(ii), we have
that $\id - J_{\eta(F+G)}$ is $\left(1-\frac{\rho}{\eta}\right)$ cocoercive. 
Recall the definition of $\alpha$ from \Cref{alg:cohypo} and the notation for $\id-J_{\eta(F+G)}$ as:
\begin{equation*}
 \alpha = 1-\frac{\rho}{\eta} \text{~~~~and~~~} R = \id - J_{\eta(F+G)}.
\end{equation*}
With these, we use $\alpha$-cocoercivity of $R$:
\begin{equation}\label{eq: oog4}
    \langle R(x_{k+1}) - R(x_k), x_{k+1} - x_k \rangle \geq \alpha \| R(x_{k+1}) - R(x_k)\|^2.
\end{equation}
By rearranging the update rule of $x_{k+1}$ in Algorithm \ref{alg:cohypo}, we have for $k\geq 0$ that
\begin{align}
    x_{k+1} &= \beta_k x_0 + (1-\beta_k) x_k - \alpha(1-\beta_k)( \id - \widetilde{J}_{\eta(F+G)} )(x_k)\notag \\
    &= \beta_k x_0 + (1-\beta_k) x_k - \alpha(1-\beta_k)R(x_k) + \alpha(1-\beta_k)( \widetilde{J}_{\eta(F+G)} - J_{\eta(F+G)} )(x_k),\label{eq: niu4}
\end{align}
where we added and subtracted $\alpha(1-\beta_k)J_{\eta(F+G)}(x_k)$ and used the definition $R=\id-J_{\eta(F+G)}$.

We now use a step that is common in the rate analysis of Halpern-type methods, which can be seen for example in \cite{diakonikolas2020halpern} or \cite{yoon2021accelerated}. In particular, from \eqref{eq: niu4}, we obtain two identical representations for $x_{k+1} - x_k$:
\begin{subequations}
\begin{align}
    x_{k+1} - x_k &= \beta_k(x_0 - x_k) - \alpha(1-\beta_k) R(x_k) + \alpha(1-\beta_k)( \widetilde{J}_{\eta(F+G)} - J_{\eta(F+G)} )(x_k),\label{eq: bbn1} \\
    x_{k+1} - x_k&= \frac{\beta_k}{1-\beta_k}(x_0 - x_{k+1}) -  \alpha R(x_k) + \alpha( \widetilde{J}_{\eta(F+G)} - J_{\eta(F+G)} )(x_k),\label{eq: bbn2}
\end{align}
\end{subequations}
where the second representation follows from subtracting $\beta_k x_{k+1}$ from both sides of \eqref{eq: niu4} and rearranging.
With these at hand, we develop the left-hand side of \eqref{eq: oog4}. First, by using \eqref{eq: bbn2}, we have that
\begin{align}
    \langle R(x_{k+1}), x_{k+1} - x_k \rangle &= \frac{\beta_k}{1-\beta_k}\langle R(x_{k+1}), x_0 - x_{k+1} \rangle - \alpha\langle R(x_{k+1}), R(x_k) \rangle \notag \\
    &\quad+ \alpha\langle R(x_{k+1}),  ( \widetilde{J}_{\eta(F+G)} - J_{\eta(F+G)} )(x_k)\rangle \notag \\
    &= \frac{\beta_k}{1-\beta_k}\langle R(x_{k+1}), x_0 - x_{k+1} \rangle - \frac{\alpha}{2} \left( \|R(x_{k+1})\|^2 + \|R(x_k)\|^2 - \|R(x_{k+1}) - R(x_k)\|^2 \right) \notag \\
    &\quad+ \alpha\langle R(x_{k+1}),  ( \widetilde{J}_{\eta(F+G)} - J_{\eta(F+G)} )(x_k)\rangle,\label{eq: oog5}
    \end{align}
    where the last step used the expansion $\|a-b\|^2 =\|a\|^2 -2\langle a, b \rangle + \|b\|^2$.
    
Second, by using \eqref{eq: bbn1}, we have that
\begin{align}
    -\langle R(x_{k}), x_{k+1} - x_k \rangle &= -\beta_k\langle R(x_{k}), x_0 - x_{k} \rangle + \alpha(1-\beta_k) \|R(x_k)\|^2 \notag\\
    &\quad -\alpha (1-\beta_k)\langle R(x_{k}),  ( \widetilde{J}_{\eta(F+G)} - J_{\eta(F+G)} )(x_k)\rangle.\label{eq: oog6}
\end{align}
After using \eqref{eq: oog5} and \eqref{eq: oog6} in \eqref{eq: oog4} and rearranging, we obtain
\begin{align}
    &\frac{\alpha}{2} \|R(x_{k+1})\|^2 + \frac{\beta_k}{1-\beta_k} \langle R(x_{k+1}), x_{k+1} - x_0 \rangle\notag \\
    &\leq \frac{\alpha}{2}\left( 1 - 2\beta_k\right)\|R(x_{k})\|^2 + \beta_k \langle R(x_{k}), x_{k} - x_0 \rangle\notag \\
    &\quad+\alpha \langle R(x_{k+1})-(1-\beta_k)R(x_k), ( \widetilde{J}_{\eta(F+G)} - J_{\eta(F+G)} )(x_k) \rangle - \frac{\alpha}{2} \| R(x_{k+1}) - R(x_k)\|^2.\label{eq: eer87}
\end{align}
For the third term on the right-hand side of \eqref{eq: eer87}, we apply Cauchy-Schwarz, triangle and Young's inequalities along with the definition of $\varepsilon_k$ to obtain
\begin{align}
    \alpha \langle R(x_{k+1}) - (1-\beta_k)R(x_k), (\widetilde{J}_{\eta(F+G)} - J_{\eta(F+G)})(x_k) \rangle 
    &\leq \alpha\|R(x_{k+1}) - (1-\beta_k)R(x_k)\|\varepsilon_k\notag \\
    &\leq \alpha\left(\|R(x_{k+1}) -R(x_k)\|+\beta_k\|R(x_k)\|\right)\varepsilon_k \notag\\
    &= \alpha\|R(x_{k+1}) - R(x_k)\|\varepsilon_k + \alpha\beta_k\|R(x_k)\|\varepsilon_k\notag \\
    &\leq \frac{\alpha}{2}\|R(x_{k+1}) - R(x_k)\|^2 + \frac{\alpha}{2}\varepsilon_k^2+\alpha\beta_k\|R(x_k)\|\varepsilon_k.\label{eq: byu4}
\end{align}
This is the main point of departure from the existing analysis where this inequality is bounded by $O\left( \|x_k- x^\star\|\varepsilon_k \right)$, cf. \citep[display equation after (14)]{diakonikolas2020halpern}. We instead use the last term in \eqref{eq: eer87} (which we obtained by using the firm nonexpansiveness of $(1-\alpha)\id+\alpha J_{\eta(F+G)}$) to cancel the corresponding error term in \eqref{eq: byu4}.
We use this last estimate in \eqref{eq: eer87} and get
\begin{align}
    \frac{\alpha}{2} \|R(x_{k+1})\|^2 + \frac{\beta_k}{1-\beta_k} \langle R(x_{k+1}), x_{k+1} - x_0 \rangle
    &\leq \frac{\alpha}{2}\left( 1 - 2\beta_k\right)\|R(x_{k})\|^2 + \beta_k \langle R(x_{k}), x_{k} - x_0 \rangle\notag \\
    &\quad+\frac{\alpha}{2}\varepsilon_k^2+\alpha\beta_k\|R(x_k)\|\varepsilon_k.\label{eq: oer88}
\end{align}
Noting the identities 
\[
\beta_k = \frac{1}{k+2} \implies 1-\beta_k = \frac{k+1}{k+2}, \quad 
\frac{\beta_k}{1-\beta_k} = \frac{1}{k+1}, \quad 1-2\beta_k = \frac{k}{k+2},
\]
on \eqref{eq: oer88} we obtain
\begin{align*}
    \frac{\alpha}{2}\|R(x_{k+1})\|^2 + \frac{1}{k+1} \langle R(x_{k+1}), x_{k+1} - x_0\rangle  &\leq \frac{\alpha}{2} \frac{k}{k+2} \| R(x_k)\|^2 + \frac{1}{k+2} \langle R(x_k), x_k-  x_0\rangle \\
    &\quad+ \frac{\alpha}{2}\varepsilon_k^2+\frac{\alpha}{k+2}\|R(x_k)\|\varepsilon_k,
\end{align*}
which holds for $k\geq 0$.
Multiplying both sides by $(k+1)(k+2)$ gives
\begin{align*}
    &\frac{\alpha(k+1)(k+2)}{2} \| R(x_{k+1})\|^2 + (k+2) \langle R(x_{k+1}), x_{k+1} - x_0 \rangle \\
    &\leq \frac{\alpha k (k+1)}{2} \|R(x_k)\|^2 + (k+1) \langle R(x_k), x_k - x_0 \rangle \\
    &\quad+\frac{\alpha}{2}(k+1)(k+2)\varepsilon_k^2+\alpha(k+1)\|R(x_k)\|\varepsilon_k.
\end{align*}
We sum the inequality for $k=0, 1, \dots, K-1$ to get
\begin{align}
    &\frac{\alpha K(K+1)}{2} \| R(x_{K})\|^2 + (K+1) \langle R(x_{K}), x_{K} - x_0 \rangle \notag\\
    &\leq \sum_{k=0}^{K-1} \left(\frac{\alpha}{2}(k+1)(k+2)\varepsilon_k^2+\alpha(k+1)\|R(x_k)\|\varepsilon_k\right).\label{eq: llr4}
\end{align}
By the standard estimation for the inner product on this left-hand side (using \emph{(i)} monotonicity of $R$, which is implied by $\alpha$-cocoercivity of $R$ with $\alpha > 0$; \emph{(ii)} definition of $x^\star$ as $R(x^\star) = (\id-J_{\eta(F+G)})(x^\star) =0$ which uses \Cref{fact: cohypo}(i);
\emph{(iii)} Young's inequality), we derive 
\begin{align*}
    (K+1)\langle R(x_K), x_K- x_0 \rangle &= (K+1)\langle R(x_K), x^\star- x_0 \rangle + (K+1)\langle R(x_K), x_K- x^\star \rangle \\
    &\geq (K+1)\langle R(x_K), x^\star- x_0 \rangle\\
    &\geq -\frac{\alpha K(K+1)}{4} \|R(x_K)\|^2 - \frac{K+1}{K\alpha} \|x^\star - x_0\|^2.
\end{align*}
We use this lower bound on \eqref{eq: llr4} to conclude the first assertion. The second claim  is essentially \Cref{lem: wlk3}.
\end{proof}
\subsection{Complexity of the Inner Loop}\label{subsubsec: cohypo_inner}
\begin{reptheorem}{th: fbf}{(See e.g., \citep[Theorem 3.4]{tseng2000modified})}
    Let $B$ be $\mu$-strongly monotone with $\mu > 0$ and $L_B$-Lipschitz; $A$ be maximally monotone, and $z^\star=(A+B)^{-1}(0)\neq \emptyset$. For any $\zeta>0$, running Algorithm~\ref{alg:fbf} with $\tau = \frac{1}{2L_B}$ and initial point $z_0$ for $N=\left\lceil \frac{4L_B}{\mu}\log\frac{\|z_0-z^\star\|}{\zeta} \right\rceil$ iterations give
    \begin{equation*}
        \|z_N - z^\star\| \leq\zeta,
    \end{equation*}
    where the number of calls to evaluations of $B$ and resolvents of $A$ is upper bounded by $2\left\lceil \frac{4L_B}{\mu}\log\frac{\|z_0-z^\star\|}{\zeta} \right\rceil$.
\end{reptheorem}
\begin{proof}
    We only derive the number of iterations for ease of reference which follows trivially from \citep[Theorem 3.4]{tseng2000modified}. In particular, in the notation of \citep[Theorem 3.4(c)]{tseng2000modified}, we select $\theta=\frac{1}{2}$, $\alpha = \frac{1}{2L_B}$ and assume without loss of generality that $\frac{\mu}{L_B}\leq\frac{3}{4}$ to obtain
    \begin{equation*}
        \|z_{t+1} - z^\star\|^2 \leq \left( 1-\frac{\mu}{2L_B} \right)\|z_t-z^\star\|^2,
    \end{equation*}
    which after unrolling gives that
    \begin{align*}
        \|z_N-z^\star\|^2 \leq \left(1-\frac{\mu}{2L_B}\right)^N\|z_0 - z^\star\|^2.
    \end{align*}
    Standard manipulations give that after $N=\left\lceil \frac{4L_B}{\mu}\log\frac{\|z_0-z^\star\|}{\zeta} \right\rceil$ iterations, we have  $\|z_N - z^\star \|^2\leq\zeta^2$.
\end{proof}
 
\subsection{Total complexity}\label{sec: total_compl_app}
\begin{reptheorem}{th: cohypo_det}
Let Assumptions \ref{asp:1} and \ref{asp:2} hold. Let $\eta < \frac{1}{L}$ in Algorithm \ref{alg:cohypo} and suppose $\rho < \eta$. 
    For any $k=1,\dotsc,K$, we have that $(x_k)$ from Algorithm \ref{alg:cohypo} satisfies
    \begin{equation*}
        \frac{1}{\eta^2}\|x_k - J_{\eta(F+G)}(x_k)\|^2 \leq \frac{16\|x_0-x^\star\|^2}{(\eta-\rho)^2 (k+1)^2}.
    \end{equation*}
    The number of first-order oracles used at iteration $k$ of Algorithm \ref{alg:cohypo} is upper-bounded by
    \begin{equation*}
        \left\lceil  \frac{4(1+\eta L)}{1-\eta L} \log(98\sqrt{k+2}\log(k+2))\right\rceil.
    \end{equation*}
\end{reptheorem}
\begin{proof}[Proof of Theorem \ref{th: cohypo_det}]
We recall the notations
\begin{equation*}
 \alpha = 1-\frac{\rho}{\eta} \text{~~~~and~~~} R = \id - J_{\eta(F+G)}
\end{equation*}
and start from the result of Lemma \ref{lem: cohypo_outer_mainlem} which states for $K\geq 1$ that 
\begin{align*}
        \frac{\alpha K(K+1)}{4} \| R(x_K)\|^2 \leq \frac{K+1}{K\alpha} \| x^\star - x_0\|^2 + \sum_{k=0}^{K-1} \left(\frac{\alpha}{2}(k+1)(k+2)\varepsilon_k^2+\alpha(k+1)\|R(x_k)\|\varepsilon_k\right).
    \end{align*}
Let us set
\begin{equation}\label{eq: fkl4}
    \varepsilon_k = \frac{\gamma \|R(x_k)\|}{ \sqrt{k+2}\log(k+2)}
\end{equation}
and note that we will not evaluate $\varepsilon_k$ but we will prove that for a \emph{computable} number of inner iterations $N_k$, this error criterion will be satisfied.

We substitute the definition of $\varepsilon_k$ to the previous inequality and get
\begin{align}\label{eq: ind_start1}
    \frac{\alpha K(K+1)}{4}\|R(x_K)\|^2 \leq \frac{K+1}{K\alpha} \| x_0 - x^\star\|^2 + \sum_{k=0}^{K-1} \left( \frac{\alpha \gamma^2(k+1)\|R(x_k)\|^2}{2 \log^2(k+2)} + \frac{\alpha \gamma \sqrt{k+2}\|R(x_k)\|^2}{\log (k+2)}\right).
\end{align}
We now show by induction that
\begin{equation}\label{eq: ind_result1}
    \|R(x_k)\| \leq \frac{4\|x_0 - x^\star\|}{\alpha(k+1)}~~~\forall k \geq 1.
\end{equation}
Note that $\alpha$-cocoercivity of $R$ and $R(x^\star) = 0$ gives $\|R(x_0)\|\leq \frac{1}{\alpha}\|x_0-x^\star\|$. For $k=1$, we have by $\alpha$-cocoercivity of $R$, $R(x^\star)=0$ and Lemma \ref{lem: wlk3} that
\begin{align}
    \|R(x_1)\| \leq \frac{1}{\alpha}\|x_1 - x^\star\| \leq \frac{1}{\alpha}\left( \|x_0 - x^\star\| + \frac{\gamma\|x_0 -x^\star\|}{2\sqrt{2} \log 2} \right) < \frac{2\|x_0 - x^\star\|}{\alpha},\label{eq: base_case_halpern}
\end{align}
for $\gamma =\frac{1}{98}$, which establishes the base case of induction. Now we assume \eqref{eq: ind_result1} holds for all $k \leq K-1$. Then, we use \eqref{eq: ind_start1} for $K\geq 2$ (where we also use $\frac{K+1}{K}\leq 2$):
\begin{align*}
    \frac{\alpha K(K+1)}{4}\|R(x_K)\|^2 &\leq \frac{2}{\alpha} \| x_0 - x^\star\|^2 + \sum_{k=0}^{K-1} \left( \frac{\alpha\gamma^2(k+1)\|R(x_k)\|^2}{2 \log^2(k+2)} + \frac{\alpha\gamma \sqrt{k+2}\|R(x_k)\|^2}{\log (k+2)}\right) \\
    &\leq \frac{2}{\alpha} \| x_0 - x^\star\|^2 + \sum_{k=0}^{K-1} \left(  \frac{8 \gamma^2\|x_0 - x^\star\|^2}{\alpha (k+1)\log^2(k+2)} + \frac{16\gamma\sqrt{k+2}\|x_0-x^\star\|^2}{\alpha(k+1)^2\log(k+2)} \right).
\end{align*}
Since we have that
\begin{equation*}
    \sum_{k=0}^{K-1}\frac{8}{(k+1)\log^2(k+2)} < 28 \text{~and~} \sum_{k=0}^{K-1} \frac{16\sqrt{k+2}}{(k+1)^2\log(k+2)} < 49,
\end{equation*}
the value $\gamma = \frac{1}{98}$ results in
\begin{equation*}
    \frac{\alpha K (K+1)}{4}\|R(x_K)\|^2 \leq \frac{2.6}{\alpha} \| x_0 - x^\star\|^2.
\end{equation*}
A direct implication of this inequality is that
\begin{align*}
    \|R(x_K)\|^2 &\leq \frac{10.4}{\alpha^2 K(K+1)} \| x_0 - x^\star\|^2 \\
    &\leq \frac{15.6}{\alpha^2 (K+1)^2} \| x_0 - x^\star\|^2,
\end{align*}
where we used $\frac{1}{K(K+1)} \leq \frac{1.5}{(K+1)^2}$ which holds when $K\geq 2$. This completes the induction.

We next see that with $N_k$ set as in \Cref{alg:cohypo}, we get the inexactness level specified by $\varepsilon_k$ and the oracle complexity of each iteration is as claimed in the statement.

At iteration $k$, to apply the result in Theorem \ref{th: fbf}, we identify the following settings  from \Cref{alg:cohypo} 
\begin{align*}
    &A\equiv\eta G, ~~~B(\cdot)\equiv(\id+\eta F)(\cdot)-x_k, ~~~z_0 \equiv x_k,~~~z^\star \equiv J_{\eta(F+G)}(x_k), ~~~\zeta\equiv\varepsilon_k\\
    \Longrightarrow~&z_0 - z^\star = (\id-J_{\eta(F+G)})(x_k) = R(x_k)
\end{align*}
hence $B$ is $(1+\eta L)$-Lipschitz and $(1-\eta L)$-strongly monotone due to \Cref{fact: cohypo}(iv). Existence of $z^\star$ is guaranteed by \Cref{fact: cohypo}(iii).

We now see that by the setting of
\begin{equation*}
T=\left\lceil \frac{4(1+\eta L)}{1-\eta L}\log(98\sqrt{k+2}\log(k+2))\right\rceil = \left\lceil \frac{4(1+\eta L)}{1-\eta L}\log\frac{\|R(x_k)\|}{\varepsilon_k}\right\rceil,
\end{equation*}
\Cref{th: fbf} tells us that
\begin{equation*}
    \| \widetilde{J}_{\eta(F+G)}(x_k) - J_{\eta(F+G)}(x_k)\| \leq\varepsilon_k,
\end{equation*}
for $\varepsilon_k$ given in \eqref{eq: fkl4}, as claimed.

Since each iteration of \Cref{alg:fbf} uses $2$ evaluations of $F$ and $1$ resolvent for $G$, the first-order oracle complexity is $2N_k$ and the result follows.
\end{proof}
We now continue with the proof of \Cref{cor: cohypo_det} which follows trivially from \Cref{th: cohypo_det}.
\begin{proof}[Proof of \Cref{cor: cohypo_det}]
    By \Cref{th: cohypo_det}, we have that after at most $\left\lceil \frac{4\|x_0-x^\star\|}{(\eta-\rho)\varepsilon}\right\rceil$ iterations, i.e., for a $K$ such that
    \begin{equation}\label{eq: njh5}
        K\leq \left\lceil \frac{4\|x_0-x^\star\|}{(\eta-\rho)\varepsilon}\right\rceil,
    \end{equation}
    we are guaranteed to have
\begin{equation*}
    \eta^{-1}\|(\id-J_{\eta(F+G)})(x_K) \| \leq \varepsilon.
\end{equation*}
Total number of first-oracle calls during the run of the algorithm then be calculated as
\begin{equation*}
    \sum_{k=1}^K \left\lceil \frac{4(1+\eta L)}{1-\eta L}\log(98\sqrt{k+2}\log(k+2))\right\rceil \leq K\cdot \left(\frac{4(1+\eta L)}{1-\eta L}\log(98\sqrt{K+2}\log(K+2))+1\right).
\end{equation*}
We conclude after using \eqref{eq: njh5}.
\end{proof}
\section{Proofs for Section \ref{sec: weak_mvi}}\label{sec: weak_mvi_app}
\subsection{Preliminary results}
We now derive similar properties to \Cref{fact: cohypo} but with Assumption \ref{asp:3}. These proofs are slightly more involved than \Cref{fact: cohypo} to accommodate the weaker assumption. 

We start with the definition of conic quasi-nonexpansiveness that will be used in \Cref{fact: weakmvi}. Recall that an operator $N$ is quasi-nonexpansive when $\|Nx-x^\star\|\leq\|x-x^\star\|$ where $x^\star$ is a fixed point of $N$.
\begin{definition}
$T\colon\mathbb{R}^d\to\mathbb{R}^d$ is $\alpha$-conically quasi-nonexpansive 
if there exists a quasi-nonexpansive operator $N\colon\mathbb{R}^d\to\mathbb{R}^d$
such that $T=(1-\alpha)\id + \alpha N$.
\end{definition}
This is a modification of conic nonexpansiveness in \citep[Definition 3.1]{bauschke2021generalized}. In \Cref{subsec: conic_star}, we show the conic quasi-nonexpansiveness (and related properties) of the resolvent of a star-cohypomonotone operator in view of \Cref{asp:3}, by invoking the corresponding arguments of \cite{bauschke2021generalized} restricted to a point in the domain and a fixed point of the resolvent. Then we show \emph{star}-cocoercivity of $\id-J_{\eta(F+G)}$ which facilitates the analysis of KM iteration.
\begin{fact}\label{fact: weakmvi}
    Let Assumptions \ref{asp:1} and \ref{asp:3} hold. 
    Then, we have
    \begin{enumerate}[(i)]
        \item The operator $J_{\eta(F+G)}$ is single-valued and $\dom J_{\eta(F+G)}=\mathbb{R}^d$ when $\eta < \frac{1}{L}$. \label{fact2: item1}
        \item The operator $J_{\eta(F+G)}$ is $\frac{1}{2\left(1-\frac{\rho}{\eta}\right)}$-conically quasi-nonexpansive and $\id - J_{\eta(F_G)}$ is $\left(1-\frac{\rho}{\eta}\right)$-star-cocoercive when $\rho < \eta$.
        \item For any $\bar x \in \mathbb{R}^d$, computing $J_{\eta(F+G)}(\bar x)$ is equivalent to solving the problem        \label{fact2: item3}
        \begin{equation}\label{eq: str_inc_wmi}
            \text{Find~} x\in\mathbb{R}^d \text{~such that~} 0 \in (\id+\eta(F+G))(x) - \bar x.
        \end{equation}
        The problem \eqref{eq: str_inc_wmi} has a unique solution when $\eta < \frac{1}{L}$.
        \item The operator $\id+\eta F$ is $(1+\eta L)$-Lipschitz and $(1-\eta L)$-strongly monotone when $\eta < \frac{1}{L}$.
    \end{enumerate}
\end{fact}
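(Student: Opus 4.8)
The plan is to mirror the proof of \Cref{fact: cohypo}, replacing each appeal to (maximal) cohypomonotonicity by the weaker star condition of Assumption~\ref{asp:3}, and to note that well-definedness of the resolvent no longer follows for free from \cite{bauschke2021generalized} but instead from strong monotonicity of $\id+\eta F$. Parts (iii) and (iv) are verbatim the corresponding arguments in \Cref{fact: cohypo}: for (iv), Cauchy--Schwarz gives $\langle \eta F(x)-\eta F(y),x-y\rangle\geq-\eta L\|x-y\|^2$, so $\id+\eta F$ is $(1-\eta L)$-strongly monotone when $\eta<\tfrac1L$, and the triangle inequality gives $(1+\eta L)$-Lipschitzness; for (iii), rearranging $\bar x^\star=(\id+\eta(F+G))^{-1}(\bar x)$ yields $0\in(\id+\eta(F+G))(\bar x^\star)-\bar x$, with existence and uniqueness supplied by (i). For (i), I would argue directly: by (iv) together with monotonicity of $G$, the operator $\id+\eta(F+G)$ is $(1-\eta L)$-strongly monotone; since $\id+\eta F$ is continuous with full domain it is maximally monotone, so $\id+\eta(F+G)$ is maximally monotone by the sum theorem (one summand having full domain), hence surjective by Minty, and strong monotonicity forces injectivity. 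Therefore $J_{\eta(F+G)}=(\id+\eta(F+G))^{-1}$ is single-valued with $\dom J_{\eta(F+G)}=\mathbb{R}^d$ when $\eta<\tfrac1L$; equivalently, this is the statement of \cite{bauschke2021generalized} for an $L$-Lipschitz perturbation of a maximal monotone operator with $\eta<\tfrac1L$.

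The substantive part is (ii). Fix a solution $x^\star$ from the subset in Assumption~\ref{asp:3}, so $0\in(F+G)(x^\star)$ and $\langle u,x-x^\star\rangle\geq-\rho\|u\|^2$ for all $(x,u)\in\gra(F+G)$; scaling gives $\langle w,x-x^\star\rangle\geq-\tfrac{\rho}{\eta}\|w\|^2$ for all $(x,w)\in\gra\,\eta(F+G)$. Write $\alpha=1-\tfrac{\rho}{\eta}>0$ (using $\rho<\eta$) and $R=\id-J_{\eta(F+G)}$. For any $x$, let $y=J_{\eta(F+G)}(x)$; the defining property of the resolvent is $x-y\in\eta(F+G)(y)$, so applying the scaled star condition at $(y,x-y)$ gives $\langle x-y,y-x^\star\rangle\geq-\tfrac{\rho}{\eta}\|x-y\|^2$, and adding $\|x-y\|^2=\langle x-y,x-y\rangle$ yields
\begin{equation*}
\langle R(x),x-x^\star\rangle=\langle x-y,x-x^\star\rangle\geq\Big(1-\tfrac{\rho}{\eta}\Big)\|x-y\|^2=\alpha\|R(x)\|^2.
\end{equation*}
Since $0\in(F+G)(x^\star)$ forces $J_{\eta(F+G)}(x^\star)=x^\star$, i.e.\ $R(x^\star)=0$, this is exactly $\alpha$-star-cocoercivity of $\id-J_{\eta(F+G)}$. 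For conic quasi-nonexpansiveness, set $N=2\alpha J_{\eta(F+G)}+(1-2\alpha)\id$, so that $J_{\eta(F+G)}=(1-\tfrac{1}{2\alpha})\id+\tfrac{1}{2\alpha}N$; the fixed points of $N$ coincide with those of $J_{\eta(F+G)}$, hence with the zeros of $F+G$, and from $N(x)-x^\star=(x-x^\star)-2\alpha R(x)$ and the star-cocoercivity just proved,
\begin{equation*}
\|N(x)-x^\star\|^2=\|x-x^\star\|^2-4\alpha\langle R(x),x-x^\star\rangle+4\alpha^2\|R(x)\|^2\leq\|x-x^\star\|^2,
\end{equation*}
so $N$ is quasi-nonexpansive and $J_{\eta(F+G)}$ is $\tfrac{1}{2\alpha}$-conically quasi-nonexpansive. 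Alternatively one can simply invoke the star analogues of \citep[Cor.~3.5(iii), Prop.~3.11(ii)]{bauschke2021generalized} assembled in \Cref{subsec: conic_star}.

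I expect the only genuine subtlety to be part (i): unlike \Cref{fact: cohypo}, where \cite{bauschke2021generalized} delivers well-definedness of the resolvent directly from maximal cohypomonotonicity, Assumption~\ref{asp:3} is too weak for that, and one must recognize that single-valuedness and full domain follow purely from $\eta<\tfrac1L$ (i.e.\ from strong monotonicity of $\id+\eta F$), with the nonmonotonicity condition playing no role. A secondary point requiring care in (ii) is bookkeeping of the distinguished solution: the star condition, and hence star-cocoercivity and conic quasi-nonexpansiveness, must be stated with respect to \emph{the same} $x^\star$ from the subset in Assumption~\ref{asp:3} throughout, in contrast to the ``between any pair of points'' conclusions of \Cref{fact: cohypo}. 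Everything else is a routine restriction of the Bauschke--Moursi--Wang arguments to that fixed point.
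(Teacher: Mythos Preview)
Your proposal is correct. Parts (iii) and (iv) match the paper verbatim. For (ii), the paper simply invokes the star-analogues from \Cref{subsec: conic_star} (namely \Cref{prop: kju4} and \Cref{cor: ser4}), while you write out inline the short direct computation those results encode; you mention this alternative yourself, so there is no real divergence. The only genuine difference is (i): the paper takes a longer route, showing $\eta(F+G)$ is maximally $\eta L$-hypomonotone (via results of \cite{dao2019adaptive} and \cite{bauschke2017convex}), then that its inverse is maximally $\eta L$-cohypomonotone \citep[Lemma~2.8]{bauschke2021generalized}, and finally applying \citep[Cor.~2.14]{bauschke2021generalized}. Your direct argument---$\id+\eta(F+G)$ is maximally $(1-\eta L)$-strongly monotone, hence bijective by Minty surjectivity plus strong-monotonicity injectivity---is more elementary and reaches the same conclusion; both correctly isolate that well-definedness hinges only on $\eta<\tfrac1L$, not on Assumption~\ref{asp:3}.
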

\begin{proof}~
\begin{enumerate}[(i)]
\item When $F$ is $L$-Lipschitz, it is maximally $L$-hypomonotone (see e.g., \citep[Lemma 2.12]{giselsson2021compositions}) and $\eta F$ is maximally $\eta L$-hypomonotone since it is $\eta L$-Lipschitz.

By \citep[Lemma 3.2(ii)]{dao2019adaptive}, we know that $\eta F+\id$ is maximally $(1-\eta L)$-(strongly) monotone. Then, using this and maximal monotonicity of $G$, we have by \citep[Corollary 25.5]{bauschke2017convex} that $\id+\eta(F+G)$ is maximally $(1-\eta L)$-(strongly) monotone. Invoking \citep[Lemma 3.2(ii)]{dao2019adaptive} again gives us that $\eta(F+G)$ is maximally $\eta L$-hypomonotone.

We can then use \citep[Lemma 2.8]{bauschke2021generalized} to obtain that $(\eta(F+G))^{-1}$ is maximally $\eta L$-cohypomonotone. This can be combined with \citep[Corollary 2.14]{bauschke2021generalized} to get the result when $\eta L < 1$.

\item Since $F+G$ has a $\rho$-weak MVI solution under Assumption \ref{asp:3}, we have that $\eta (F+G)$ has $\rho/\eta$-weak MVI solution, i.e., by simple change of variables, we have for some $\eta >0$
\begin{align*}
    &\langle \eta u, x-x^* \rangle \geq \eta \rho \| u\|^2 \text{~~where~~} u \in (F+G)(x) \\
    \iff&\langle v, x-x^* \rangle \geq \frac{\rho}{\eta} \| v\|^2 \text{~~where~~} v \in \eta(F+G)(x).
\end{align*}
 \Cref{prop: kju4} then gives us that $J_{\eta(F+G)}$ is $\frac{1}{2\left(1-\frac{\rho}{\eta}\right)}$-conically quasi-nonexpansive and then  we have that $\id - J_{\eta(F+G)}$ is $\left( 1-\frac{\rho}{\eta}\right)$ star-cocoercive by \Cref{cor: ser4}.

\item The proof is the same as \Cref{fact: cohypo}\eqref{fact1: item3} where the only difference is that now we ensure the existence of $J_{\eta(F+G)}(\bar x)$ with \eqref{fact2: item1}. Uniqueness also follows from this.

\item The proof is the same as \Cref{fact: cohypo}(iv).\qedhere
\end{enumerate}
\end{proof}
\subsubsection{Properties Related to Conic Quasi-Nonexpansiveness}\label{subsec: conic_star}
This section particularizes the notion and properties of the $\alpha$-conic nonexpansiveness
in~\cite{bauschke2021generalized} to their \emph{star} variants. The aim is to show that the properties extend to their \emph{star} or \emph{quasi}-variants when we use weak MVI condition instead of cohypomonotonicity.
This sections implicitly assumes that $J_A$ for operator $A\colon\mathbb{R}^d\rightrightarrows\mathbb{R}^d$ is well-defined, sufficient conditions for which is shown in \Cref{fact: weakmvi}.
We say that an operator $N$ is quasi-nonexpansive when $\|Nx-x^\star\|\leq\|x-x^\star\|$ where $x^\star$ is a fixed point of $N$.

\begin{lemma}{(See \citep[Lemma 3.4]{bauschke2021generalized})}\label{lem:conic-star}
Consider $T\colon\mathbb{R}^d\to\mathbb{R}^d$ and let $T=(1-\alpha)\id+\alpha N$. Then,
$N$ is quasi-nonexpansive
if and only if we have, for all $x\in\mathbb{R}^d$,
\[
2\alpha\inprod{Tx-x^\star}{(\id-T)x} \ge (1-2\alpha)\|(\id-T)x\|^2,
\]
or equivalently
\begin{align}
\left\|\left(1-\frac{1}{\alpha}\right)x + \frac{1}{\alpha}Tx - x^\star\right\| \le \|x-x^\star\|.
\label{eq:star_nonexpansive}
\end{align}
\end{lemma}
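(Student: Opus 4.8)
The plan is to carry out a direct algebraic computation, following the template of \citep[Lemma 3.4]{bauschke2021generalized} but tracking only the single reference point $x^\star$ instead of an arbitrary pair $(x,y)$. The first step is the elementary bookkeeping observation that $x^\star$ is a fixed point of $N$ if and only if it is a fixed point of $T$: from $T=(1-\alpha)\id+\alpha N$ we get $N=\frac{1}{\alpha}T+\bigl(1-\frac{1}{\alpha}\bigr)\id$, so (using $\alpha>0$) $Nx^\star=x^\star \iff Tx^\star=x^\star$. In particular the claimed equivalence \eqref{eq:star_nonexpansive} is nothing but the definition of quasi-nonexpansiveness of $N$ rewritten through $Nx=\bigl(1-\frac{1}{\alpha}\bigr)x+\frac{1}{\alpha}Tx$, so that part is immediate.

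For the main equivalence, introduce the shorthand $r:=(\id-T)x$, so that $Tx=x-r$ and hence $Nx=\frac{1}{\alpha}Tx+\bigl(1-\frac{1}{\alpha}\bigr)x=x-\frac{1}{\alpha}r$, giving $Nx-x^\star=(x-x^\star)-\frac{1}{\alpha}r$. Expanding the square,
\[
\|Nx-x^\star\|^2=\|x-x^\star\|^2-\frac{2}{\alpha}\langle x-x^\star,\,r\rangle+\frac{1}{\alpha^2}\|r\|^2,
\]
so $\|Nx-x^\star\|\le\|x-x^\star\|$ holds if and only if $\|r\|^2\le 2\alpha\langle x-x^\star,\,r\rangle$ (multiply through by $\alpha^2>0$ and rearrange). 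Now substitute $x-x^\star=(Tx-x^\star)+r$ into the right-hand side: this gives $2\alpha\langle x-x^\star,\,r\rangle=2\alpha\langle Tx-x^\star,\,r\rangle+2\alpha\|r\|^2$, and plugging this in and collecting the $\|r\|^2$ terms yields exactly $(1-2\alpha)\|r\|^2\le 2\alpha\langle Tx-x^\star,\,r\rangle$, i.e. the first displayed inequality. Reading the chain of equivalences in both directions, over all $x\in\mathbb{R}^d$, completes the proof.

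There is no genuine obstacle: every step is an identity or a sign-preserving rearrangement. The only points requiring a little care are (i) recording at the outset that $\mathrm{Fix}\,N=\mathrm{Fix}\,T$, so that "$x^\star$ a fixed point of $N$'' and "$Tx^\star=x^\star$'' may be used interchangeably, and (ii) using $\alpha>0$ when clearing denominators so that inequalities are not reversed. For completeness one may note that the fully general non-star statement of \citep[Lemma 3.4]{bauschke2021generalized} follows from the identical computation with $x^\star$ replaced by a second point $y$ and $Nx-x^\star$ replaced by $Nx-Ny=(x-y)-\frac{1}{\alpha}\bigl((\id-T)x-(\id-T)y\bigr)$; the present lemma is the specialization to $y=x^\star\in\mathrm{Fix}\,T$, which is what the weak MVI condition supplies.
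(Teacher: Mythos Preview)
Your proof is correct and takes essentially the same approach as the paper's. The paper quotes the algebraic identity $\alpha^2\|a\|^2 - \|(\alpha-1)a+b\|^2 = 2\alpha\langle b,a-b\rangle - (1-2\alpha)\|a-b\|^2$ from \citep[Lemma 3.3]{bauschke2021generalized} with $a=x-x^\star$ and $b=Tx-x^\star$, while you carry out the same computation by hand via the shorthand $r=(\id-T)x$; both derivations are the same expansion of $\|Nx-x^\star\|^2$ followed by the same rearrangement.
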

\begin{proof}
Using $\alpha^2\|a\|^2 - \|(\alpha-1)a+b\|^2 = 2\alpha\inprod{b}{a-b} - (1-2\alpha)\|a-b\|^2$ (see \citep[Lemma 3.3]{bauschke2021generalized})
with $a=x-x^\star$ and $b=Tx-x^\star$, we have
\begin{align*}
0 \le& \ 2\alpha\inprod{Tx-x^\star}{(\id-T)x} - (1-2\alpha)\|(\id-T)x\|^2 \\
=& \ \alpha^2\|x-x^\star\|^2 - \|(\alpha-1)(x-x^\star) + Tx-x_*\|^2 \\
=& \ \alpha^2\|x-x^\star\|^2 - \|(\alpha-1)(x-x^\star) + (1-\alpha)(x-x^\star) + \alpha(Nx-x^\star)\|^2 \\
=& \ \alpha^2(\|x-x^\star\|^2 - \|Nx-x^\star\|^2),
\end{align*}
which gives the assertion. Last claim follows by substituting $N=\frac{1}{\alpha} T + \left(1-\frac{1}{\alpha}\right) \id$ in the definition of quasi-nonexpansiveness for $N$.
\end{proof}

\begin{corollary}{(See \citep[Corollary 3.5(iii)]{bauschke2021generalized})}\label{cor: ser4}
$T\colon\mathbb{R}^d\to\mathbb{R}^d$ is $\alpha$-conically quasi-nonexpansive
if and only if $\id - T$ is $\frac{1}{2\alpha}$-star-cocoercive.
\end{corollary}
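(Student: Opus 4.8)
The plan is to prove Corollary~\ref{cor: ser4} as a direct specialization of \Cref{lem:conic-star}, parallelling exactly how \citep[Corollary~3.5(iii)]{bauschke2021generalized} follows from \citep[Lemma~3.4]{bauschke2021generalized} in the non-star setting. The statement to establish is the equivalence: $T$ is $\alpha$-conically quasi-nonexpansive $\iff$ $\id-T$ is $\frac{1}{2\alpha}$-star-cocoercive. Unwinding the definitions, $T$ is $\alpha$-conically quasi-nonexpansive means $T=(1-\alpha)\id+\alpha N$ with $N$ quasi-nonexpansive, and ``$\id-T$ is $\frac{1}{2\alpha}$-star-cocoercive'' means, in the star sense of the Notation paragraph (requiring the cocoercivity inequality only at $(y,v)=(x^\star,0)$, where here $x^\star$ is a common fixed point of $T$ and $N$, equivalently a zero of $\id-T$), that
\[
\inprod{(\id-T)x}{x-x^\star} \ge \frac{1}{2\alpha}\|(\id-T)x\|^2 \qquad \forall x\in\mathbb{R}^d.
\]
So the whole task is to show this last inequality is equivalent to the characterization already proved in \Cref{lem:conic-star}, namely $2\alpha\inprod{Tx-x^\star}{(\id-T)x} \ge (1-2\alpha)\|(\id-T)x\|^2$.

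\textbf{First I would} rewrite $Tx-x^\star = (x-x^\star) - (\id-T)x$ and substitute this into the \Cref{lem:conic-star} inequality. Expanding the inner product gives
\[
2\alpha\inprod{(x-x^\star) - (\id-T)x}{(\id-T)x} = 2\alpha\inprod{x-x^\star}{(\id-T)x} - 2\alpha\|(\id-T)x\|^2,
\]
so \Cref{lem:conic-star} becomes $2\alpha\inprod{x-x^\star}{(\id-T)x} - 2\alpha\|(\id-T)x\|^2 \ge (1-2\alpha)\|(\id-T)x\|^2$, which rearranges to $2\alpha\inprod{x-x^\star}{(\id-T)x} \ge \|(\id-T)x\|^2$, i.e. exactly the star-cocoercivity inequality above after dividing by $2\alpha$. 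Since all these manipulations are reversible (algebraic identities, $\alpha>0$), the two characterizations are equivalent, and combining with \Cref{lem:conic-star} closes the loop in both directions. I should also note at the outset that since we are in the star/quasi regime, $x^\star$ is fixed throughout as a point of $\operatorname{Fix} T = \operatorname{Fix} N$ (nonemptiness is guaranteed in the intended application by \Cref{fact: weakmvi}), so both the ``quasi-nonexpansive'' and ``star-cocoercive'' properties are anchored at the same $x^\star$; this is what makes the argument go through verbatim from the two-point case.

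\textbf{There is no serious obstacle here} — the result is a one-line algebraic corollary of the preceding lemma, and the only care needed is bookkeeping: making sure the ``star'' qualifier is interpreted consistently on both sides (common fixed point $x^\star$), and that the constant comes out as $\frac{1}{2\alpha}$ rather than, say, $\frac{1}{\alpha}$ or $2\alpha$ — which it does, since the coefficient $2\alpha$ on the inner product matches the $\frac{1}{2\alpha}$ in the cocoercivity definition. If anything is mildly delicate, it is simply emphasizing that the equivalence is pointwise in $x$ and holds for every $x\in\mathbb{R}^d$, so one should phrase the proof as ``fix arbitrary $x$, then the following are equivalent,'' and chain \Cref{lem:conic-star} with the computation above.
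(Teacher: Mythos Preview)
Your proposal is correct and takes essentially the same approach as the paper: both invoke \Cref{lem:conic-star} and then perform the same one-line algebraic rearrangement. The paper phrases it as ``add $\|(\id-T)x\|^2$ to both sides'' of the inequality $\inprod{Tx-x^\star}{(\id-T)x} \ge \left(\tfrac{1}{2\alpha}-1\right)\|(\id-T)x\|^2$, which is identical to your substitution $Tx-x^\star = (x-x^\star)-(\id-T)x$ followed by expansion.
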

\begin{proof}
We use Lemma \ref{lem:conic-star}:
\begin{align*}
\inprod{Tx-x^\star}{(\id-T)x} \ge \left(\frac{1}{2\alpha}-1\right)\|(\id-T)x\|^2 
\quad \Leftrightarrow \quad \inprod{x-x^\star}{(\id-T)x} \ge \frac{1}{2\alpha}\|(\id-T)x\|^2
,\end{align*}
which is simply adding to both sides $\|(\id-T)(x)\|^2$.
\end{proof}

\begin{proposition}{(See \citep[Proposition 3.6(i)]{bauschke2021generalized})}\label{prop: kju4}
Let $A = T^{-1} - \id$ and set $N = \frac{1}{\alpha}T - \frac{1-\alpha}{\alpha}\id$,
{\it i.e.,} $T = J_A = (\id + A)^{-1} = (1-\alpha)\id + \alpha N$.
Then, $T$ is $\alpha$-conically quasi-nonexpansive if and only if 
$A$ is $\left(1-\frac{1}{2\alpha}\right)$-star-cohypomonotone, {\it i.e.,}
\[
\langle x - x^\star, u\rangle \ge -\left(1-\frac{1}{2\alpha}\right)\|u\|^2~~~\forall (x, u) \in \gra A
.\]
\end{proposition}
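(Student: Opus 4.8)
The plan is to mirror the proof of \citep[Proposition 3.6(i)]{bauschke2021generalized}, but replacing the cohypomonotonicity inequality (which must hold for every pair of points) by the star-version, i.e., the inequality anchored at $x^\star$. The starting observation is that $T = J_A$ with $A = T^{-1}-\id$ means that $(x,u)\in\gra A$ if and only if $Tx = (\id + A)^{-1}(x + u)$ evaluated appropriately; more precisely, writing $w = x + u$, we have $x = T w$ and $u = (\id - T)w$. Hence $\gra A = \{(Tw,\,(\id-T)w) : w\in\mathbb{R}^d\}$, and (since $x^\star$ is a fixed point of $T$, equivalently $0\in A(x^\star)$) the star-cohypomonotonicity of $A$ at $x^\star$ reads
\[
\langle Tw - x^\star,\ (\id-T)w\rangle \ge -\Bigl(1-\tfrac{1}{2\alpha}\Bigr)\|(\id-T)w\|^2 \qquad\forall w\in\mathbb{R}^d.
\]

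**Key steps.** First I would make the substitution above explicit to translate the assertion ``$A$ is $(1-\tfrac{1}{2\alpha})$-star-cohypomonotone'' into exactly the inequality $2\alpha\langle Tw-x^\star,(\id-T)w\rangle \ge (1-2\alpha)\|(\id-T)w\|^2$ for all $w$ — note this is precisely, after dividing by $2\alpha > 0$ and rearranging, the first displayed equivalent condition in \Cref{lem:conic-star}. Second, I would invoke \Cref{lem:conic-star} directly: that lemma already establishes that this inequality is equivalent to $N = \tfrac1\alpha T - \tfrac{1-\alpha}{\alpha}\id$ being quasi-nonexpansive, which by definition is exactly the statement that $T$ is $\alpha$-conically quasi-nonexpansive. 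So the proposition is essentially a repackaging of \Cref{lem:conic-star} once the graph of $A$ is parametrized by $w$. Third, I would double-check the bookkeeping on the constant: the Minty parametrization sends the cocoercivity-type constant $\frac{1}{2\alpha}$ on the ``$\id-T$'' side to the constant $\frac{1}{2\alpha}-1 = -(1-\frac{1}{2\alpha})$ on the ``$A$'' side, which matches the claimed $(1-\frac{1}{2\alpha})$-star-cohypomonotonicity (recall $\rho$-cohypomonotone corresponds to $-\rho$-comonotone, so a negative comonotonicity constant is a positive cohypomonotonicity constant); this is the same arithmetic as in \Cref{cor: ser4}.

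**Main obstacle.** The only genuinely delicate point is verifying that the Minty-type parametrization $\gra A = \{(Tw, (\id-T)w): w\in\mathbb{R}^d\}$ is valid in this nonmonotone setting — i.e., that $\id + A = \id + T^{-1} - \id = T^{-1}$ is genuinely the inverse relation of $T$, so that $J_A = T$, and that $T$ having full domain $\mathbb{R}^d$ (which we get from \Cref{fact: weakmvi}\eqref{fact2: item1} in the application) makes the parametrization surjective onto $\gra A$. Since $T = J_{\eta(F+G)}$ is single-valued with full domain under our assumptions, $w\mapsto(Tw,(\id-T)w)$ is a well-defined map whose image is all of $\gra A$; there is no monotonicity needed for this algebraic fact, only well-definedness of $T$. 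Once that is in hand, the rest is the substitution and a citation to \Cref{lem:conic-star}, so I expect the write-up to be short. I would also note in passing that, unlike the cohypomonotone case, here $x^\star$ must be a \emph{common} fixed point relative to which the weak MVI holds — but this is guaranteed because the weak-MVI solution set under \Cref{asp:3} consists of actual solutions of \eqref{eq: sde4}, hence fixed points of $J_{\eta(F+G)}$.
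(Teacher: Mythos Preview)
Your proposal is correct and follows essentially the same approach as the paper: both parametrize $\gra A$ via the Minty substitution $(x,u)=(Tw,(\id-T)w)$ with $w=x+u$, reduce star-cohypomonotonicity to the inequality $2\alpha\langle Tw-x^\star,(\id-T)w\rangle\ge(1-2\alpha)\|(\id-T)w\|^2$, and invoke \Cref{lem:conic-star} to conclude. The paper splits the two directions explicitly but the substitution and the appeal to \Cref{lem:conic-star} are identical to yours.
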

\begin{proof}
\begin{itemize}
\item[] We see the two directions:
\item[``$\Rightarrow$''] Let $(x,u) \in \gra A$. Then by definition of $A=T^{-1}-\id$ and manipulations, it follows that $(x,u) = (T(x+u),(\id-T)(x+u))$.
    By Lemma~\ref{lem:conic-star} invoked with $x\leftarrow x+u$, we have
    \begin{align*}
    & \quad 2\alpha\inprod{T(x+u)-x^\star}{(\id-T)(x+u)} \ge (1-2\alpha)\|(\id-T)(x+u)\|^2 \\
    \Leftrightarrow & \quad 2\alpha\inprod{x-x^\star}{u} \ge (1-2\alpha)\|u\|^2,
    \end{align*}
    where the last step substituted $(x,u) = (T(x+u),(\id-T)(x+u))$.
\item[``$\Leftarrow$'']
    Since $(Tx,(\id-T)x)\in\gra A$, we have by star-cohypomonotonicity that
    $\inprod{Tx-x^\star}{(\id-T)x} \ge \left(\frac{1}{2\alpha}-1\right)\|(\id-T)x\|^2$. In view of \Cref{lem:conic-star}, we deduce conic quasi-nonexpansiveness.
\end{itemize}
\end{proof}
\subsection{Complexity of the Outer Loop}\label{sec: weakmvi_outer}
\paragraph{Bounding the norm of iterates. } Just like \Cref{sec: halpern_app}, we start with the bound of the norms of the iterates.

\begin{lemma}\label{lem: wlk4}
    Let Assumptions \ref{asp:1} and \ref{asp:3} hold. Suppose that the iterates $(x_k)$ of \Cref{alg:weakmvi} satisfy
$\| J_{\eta(F+G)}(x_k) - \widetilde{J}_{\eta(F+G)}(x_k)\| \le \varepsilon_k$ for some $\varepsilon_k >0$ and $\rho<\eta$. Then, we have for $k\geq 0 $ that
    \begin{equation*}
        \|x_{k+1}-x^\star \| \leq \| x_k - x^\star \| + \left(1-\frac{\rho}{\eta}\right) \varepsilon_k.
    \end{equation*}
\end{lemma}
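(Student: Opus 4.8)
The plan is to reduce the inexact KM step to the \emph{star}-cocoercivity of $\id - J_{\eta(F+G)}$ that was established in \Cref{fact: weakmvi}, exactly paralleling the argument of \Cref{lem: wlk3} but with the star/quasi versions of the relevant properties. Write $J := J_{\eta(F+G)}$ and $R := \id - J$, and note that $\alpha = 1 - \tfrac{\rho}{\eta} \in (0,1)$ because $0 < \rho < \eta$. Under Assumptions \ref{asp:1} and \ref{asp:3}, \Cref{fact: weakmvi}(i) makes $J$ single-valued with full domain, and since $x^\star$ solves \eqref{eq: sde4} we get $0 \in \eta(F+G)(x^\star)$, hence $x^\star \in (\id+\eta(F+G))^{-1}(x^\star)$, i.e. $J(x^\star) = x^\star$ and $R(x^\star) = 0$. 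Moreover \Cref{fact: weakmvi}(ii) (via the conic quasi-nonexpansiveness theory of \Cref{subsec: conic_star}, in particular \Cref{cor: ser4}) gives that $R$ is $\alpha$-star-cocoercive: $\langle x_k - x^\star,\, R(x_k)\rangle \ge \alpha\|R(x_k)\|^2$.

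First I would rewrite the update $x_{k+1} = (1-\alpha)x_k + \alpha\widetilde{J}_{\eta(F+G)}(x_k)$ of \Cref{alg:weakmvi} by adding and subtracting $\alpha J(x_k)$, obtaining
\[
x_{k+1} = \big(x_k - \alpha R(x_k)\big) + \alpha\big(\widetilde{J}_{\eta(F+G)}(x_k) - J(x_k)\big).
\]
Then the triangle inequality and the inexactness hypothesis $\|\widetilde{J}_{\eta(F+G)}(x_k) - J(x_k)\| \le \varepsilon_k$ yield
\[
\|x_{k+1} - x^\star\| \le \|x_k - x^\star - \alpha R(x_k)\| + \alpha\varepsilon_k.
\]
It remains to bound the first term on the right by $\|x_k - x^\star\|$. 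Expanding the square, using $R(x^\star)=0$ and star-cocoercivity,
\[
\|x_k - x^\star - \alpha R(x_k)\|^2 = \|x_k - x^\star\|^2 - 2\alpha\langle x_k - x^\star, R(x_k)\rangle + \alpha^2\|R(x_k)\|^2 \le \|x_k - x^\star\|^2 - \alpha^2\|R(x_k)\|^2 \le \|x_k - x^\star\|^2 .
\]
Taking square roots and plugging back in gives $\|x_{k+1}-x^\star\| \le \|x_k - x^\star\| + \alpha\varepsilon_k$, which is the assertion after substituting $\alpha = 1 - \tfrac{\rho}{\eta}$.

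I do not expect a real obstacle here: the computation is routine once the update is split into an ``exact KM part'' plus an error term. The only point that needs care is invoking the right consequence of \Cref{fact: weakmvi} — namely that $\id - J_{\eta(F+G)}$ is star-cocoercive with modulus exactly $\alpha = 1-\tfrac{\rho}{\eta}$, which holds precisely because $\rho < \eta$, and which is the weak-MVI analogue of the full cocoercivity used in the proof of \Cref{lem: wlk3}. (Unlike \Cref{lem: wlk3}, no induction is needed, since the KM step uses a constant $\alpha_k \equiv \alpha$ rather than the vanishing Halpern weights.)
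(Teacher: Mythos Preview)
Your proof is correct and essentially the same as the paper's: both split $x_{k+1}$ into the exact KM point $(1-\alpha)x_k + \alpha J_{\eta(F+G)}(x_k)$ plus an error term of size $\alpha\varepsilon_k$, then show $\|(1-\alpha)x_k + \alpha J_{\eta(F+G)}(x_k) - x^\star\|\le\|x_k-x^\star\|$. The paper reaches that inequality by invoking the equivalent characterization \eqref{eq:star_nonexpansive} of conic quasi-nonexpansiveness from \Cref{lem:conic-star}, whereas you expand the square and use the $\alpha$-star-cocoercivity of $\id-J_{\eta(F+G)}$ directly; these are two sides of the same equivalence recorded in \Cref{cor: ser4}.
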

\begin{proof}
From \Cref{fact: weakmvi}(ii), we know that $J_{\eta(F+G)}$ is $\frac{1}{2\left( 1-\frac{\rho}{\eta} \right)}$-conically quasi-nonexpansive. Then, by property \eqref{eq:star_nonexpansive} derived in Lemma \ref{lem:conic-star}, since $J_{\eta(F+G)}$ is also $\frac{1}{1-\frac{\rho}{\eta}}$-conically quasi-nonexpansive due to $2\left( 1-\frac{\rho}{\eta} \right) \geq 1-\frac{\rho}{\eta}$ (see also \Cref{cor: ser4}), we have
    \begin{align}
        \left\|\frac{\rho}{\eta}x_{k} + \left(1-\frac{\rho}{\eta}\right)J_{\eta(F+G)}(x_k) - x^\star \right\| 
        \leq \|x_k-x^\star\|.\label{eq: sgr4}
    \end{align}
    By the definition of $x_{k+1}$ in Algorithm \ref{alg:weakmvi}, the definition of $\varepsilon_k$ and triangle inequality, we have for $k\geq 0$ that
    \begin{align*}
        \|x_{k+1} -x^\star \| &\leq \left\| \frac{\rho}{\eta}x_{k} + \left(1-\frac{\rho}{\eta}\right){J}_{\eta(F+G)}(x_k)  - x^\star\right\| + \left(1-\frac{\rho}{\eta}\right) \| J_{\eta(F+G)}(x_k) - \widetilde{J}_{\eta(F+G)}(x_k)\| \\
        &\leq \left\|\frac{\rho}{\eta}x_{k} + \left(1-\frac{\rho}{\eta}\right){J}_{\eta(F+G)}(x_k)  - x^\star\right\| + \left(1-\frac{\rho}{\eta}\right)\varepsilon_k.
    \end{align*}
    Combining with \eqref{eq: sgr4} gives the result.
\end{proof}
\paragraph{Iteration complexity. } Equipped with this result, we proceed to deriving the iteration complexity of the outer loop.
\begin{replemma}{lem: rate_outer}
Let Assumptions \ref{asp:1} and \ref{asp:3} hold. Suppose that the iterates $(x_k)$ of \Cref{alg:weakmvi} satisfy
$\| J_{\eta(F+G)}(x_k) - \widetilde{J}_{\eta(F+G)}(x_k)\| \le \varepsilon_k$ for some $\varepsilon_k >0$ and $\rho<\eta$.
Then, we have for $K\geq 1$ that
\begin{equation} \label{eq:hr1}
\sum_{k=0}^{K-1}\|(\id - J_{\eta(F+G)})(x_k)\|^2 \le
 \frac{2\eta^2}{(\eta-\rho)^2}\|x_0 - x^\star\|^2 + 6\sum_{k=0}^{K-1} \varepsilon_k^2+\frac{4\eta}{\eta-\rho} \sum_{k=0}^{K-1} \|x_k-x^\star\|\varepsilon_k,
\end{equation}
where 
\begin{equation*}
        \|x_{k}-x^\star \| \leq \| x_{k-1} - x^\star \| + \left(1-\frac{\rho}{\eta}\right) \varepsilon_{k-1}.
    \end{equation*}
\end{replemma}
\begin{proof}
From \Cref{fact: weakmvi}(ii), we have that $\id - J_{\eta(F+G)}$ is $\left(1-\frac{\rho}{\eta}\right)$-star cocoercive. Let us recall our running notations:
\begin{equation*}
\alpha = 1-\frac{\rho}{\eta},~~~R = \id - J_{\eta(F+G)},~~~\widetilde{R}= \id - \widetilde{J}_{\eta(F+G)}.
\end{equation*} 
As a result, we have the following equivalent representation of $x_{k+1}$ (see the definition in Algorithm \ref{alg:weakmvi}):
\begin{align}
    x_{k+1} &= x_k - \left(1-\frac{\rho}{\eta}\right)\left(\id -\widetilde{J}_{\eta(F+G)}\right)(x_k) \notag \\
    &= x_k - \alpha \widetilde{R}(x_k).\label{eq: cls4}
\end{align}
    Then, by $\alpha$-star-cocoercivity of $R$, we have
    \begin{align}\label{eq: rml4}
        \langle R(x_k), x_{k}-x^\star \rangle \geq \alpha \|R(x_k)\|^2.
    \end{align}
    A simple decomposition gives
    \begin{align}
        \langle R(x_k), x_k - x^\star \rangle &= \langle \widetilde{R}(x_k), x_k - x^\star \rangle + \langle R(x_k) - \widetilde{R}(x_k), x_k - x^\star \rangle. \label{eq: vnb4}
    \end{align}
    We estimate the first term on the right-hand side of \eqref{eq: vnb4} as
    \begin{align}
        \langle \widetilde{R}(x_k), x_k - x^\star \rangle &=
        \frac{1}{\alpha}\langle x_k -x_{k+1}, x_k - x^\star \rangle \notag \\
        &= \frac{1}{2\alpha} \left( \|x_k - x_{k+1}\|^2 + \| x_k- x^\star\|^2 - \| x_{k+1} - x^\star\|^2 \right) \notag\\
        &\leq \frac{1}{2\alpha}\left( \|x_k - x^\star\|^2 -\|x_{k+1}- x^\star\|^2 \right) + \frac{3\alpha}{4}\|R(x_k)\|^2 + \frac{3\alpha}{2} \| \widetilde{R}(x_k) - R(x_k)\|^2 \notag\\
        &\leq \frac{1}{2\alpha}\left( \|x_k - x^\star\|^2 -\|x_{k+1}- x^\star\|^2 \right) + \frac{3\alpha}{4}\|R(x_k)\|^2 + \frac{3\alpha\varepsilon_k^2}{2},\label{eq: vbn54}
    \end{align}
    where we used the definition of $x_{k+1}$ from  \eqref{eq: cls4} in the first step, standard expansion $\|a-b\|^2 = \|a\|^2-2\langle a,b\rangle + \|b\|^2$ for the second step, the definition of $x_{k+1}$ from  \eqref{eq: cls4} and Young's inequality in the third step, and the definitions of $R_k, \widetilde{R}_k, \varepsilon_k$ in the last step.
    
    For the second term on the right-hand side of \eqref{eq: vnb4}, we have by Cauchy-Schwarz inequality and the definition of $\widetilde{R}$ and $\varepsilon_k$ that
    \begin{align}
        \langle R(x_k) - \widetilde{R}(x_k), x_k - x^\star \rangle &\leq \|R(x_k) - \widetilde{R}(x_k)\| \| x_k - x^\star\| \notag \\
        &\leq \|x_k - x^\star\| \varepsilon_k.\label{eq: sdso4}
    \end{align}
    We combine \eqref{eq: vbn54} and \eqref{eq: sdso4} in \eqref{eq: vnb4}, plug in the result to \eqref{eq: rml4} and rearrange to obtain
    \begin{align*}
        \frac{\alpha}{4}\|R(x_k)\|^2 \leq \frac{1}{2\alpha}\left(\|x_k - x^\star\|^2 - \| x_{k+1} - x^\star\|^2 \right) + \frac{3\alpha\varepsilon_k^2}{2}+\|x_k-x^\star\|\varepsilon_k.
    \end{align*}
    The result follows by multiplying both sides by $4/\alpha$, summing for $k=0, 1, \dots, K-1$, and using the definition of $\alpha$. The bound on $\|x_k-x^\star\|^2$ follows by \Cref{lem: wlk4}.
\end{proof}
\subsection{Complexity of the Inner Loop}
In a modular fashion, we will use precisely the same algorithm for the inner loop, i.e., the Forward-Backward-Forward (FBF) algorithm of \cite{tseng2000modified} like the Section \ref{subsubsec: cohypo_inner}. Hence the complexity of the inner loop is the same as Theorem \ref{th: fbf}. As we see in the next section, the accuracy required by $\widetilde{J}_{\eta(F+G)}$ is slightly different leading to the number of inner loop iterations $N_k$ in \Cref{alg:weakmvi} to be slightly different than \Cref{alg:cohypo}.

\subsection{Total Complexity}\label{subsec: weakmvi_total_app}
\begin{reptheorem}{th: weakmvi_det}
Let Assumptions \ref{asp:1} and \ref{asp:3} hold. Let $\eta < \frac{1}{L}$ in Algorithm \ref{alg:weakmvi} and suppose that $\rho < \eta$. For any $K>1$, we have that
    \begin{equation*}
        \frac{1}{K}\sum_{k=0}^{K-1} \frac{1}{\eta^2}\|x_k - J_{\eta(F+G)}(x_k) \|^2 \leq \frac{11\|x_0-x^\star\|^2}{(\eta-\rho)^2 K}.
    \end{equation*}
    The number of first-order oracles used at iteration $k$ of Algorithm \ref{alg:weakmvi} is upper bounded by
    \begin{equation*}
\left\lceil \frac{4(1+\eta L)}{1-\eta L}\log(8(k+2)\log^2(k+2))\right\rceil.
\end{equation*}
\end{reptheorem}
\begin{remark}
    It is straightforward to convert this to a last-iterate result if we additionally assume cohypomonotonicity as in \cite{pethick2023stable}, but we refrain from doing so since the main point of this section is to \emph{relax} cohypomonotonicity.
\end{remark}
\begin{proof}[Proof of Theorem \ref{th: weakmvi_det}]
Recall the notations $\alpha = 1-\frac{\rho}{\eta}$ and $R= \id - J_{\eta(F+G)}$.
    Let us set 
    \begin{equation}\label{eq: weakmvi_epsk_def}
\varepsilon_k = \frac{1}{8(k+1)\log^2(k+2)}\|x_k - J_{\eta(F+G)}(x_k)\|
\end{equation}
and note, just as in the proof of Theorem \ref{th: cohypo_det}, that we will not evaluate the value of $\varepsilon_k$ but we will prove that for the number of iterations that FBF runs at each KM iteration in Algorithm \ref{alg:weakmvi}, the error criterion required by $\varepsilon_k$ is satisfied.

By using the definition of $\varepsilon_k$ in Lemma \ref{lem: wlk4} gives
    \begin{equation}\label{eq: bnt4}
        \| x_{k+1} - x^\star \| \leq \|x_k - x^\star \| + \frac{\alpha}{8(k+1)\log^2(k+2)}\|x_k - J_{\eta(F+G)}(x_k)\|.
    \end{equation}
    We note that $\alpha=1-\frac{\rho}{\eta} \leq 1$ and since $R= \id - J_{\eta(F+G)}$ is $\alpha$-star cocoercive as shown in \Cref{fact: weakmvi}(ii), we have that $\id-J_{\eta(F+G)}$ is $\alpha^{-1}$-star Lipschitz and hence by $(\id-J_{\eta(F+G)})(x^\star) = 0$ we have
    \begin{equation}\label{eq: bui5}
        \|(\id-J_{\eta(F+G)})(x_k) \| = \|(\id-J_{\eta(F+G)})(x_k) - (\id-J_{\eta(F+G)})(x^\star)\| \leq \alpha^{-1} \| x_k - x^\star\|.
    \end{equation}
    Consequently, \eqref{eq: bnt4} becomes, after summing for $k=0,1,\dots,K-1$ that
    \begin{align*}
        \| x_{K} - x^\star \| &\leq \|x_0 - x^\star \| + \sum_{i=0}^{K-1} \frac{1}{8(i+1)\log^2(i+2)}\|x_i - x^\star\|.
    \end{align*}
    With this, we can show by induction that 
    \begin{equation}\label{eq: sft4}
        \|x_{k} - x^\star\| \leq 2\|x_0- x^\star\|~~~ \forall k \geq 0,
    \end{equation} 
    because $\sum_{i=0}^\infty \frac{1}{(i+1)\log^2(i+2)}< 4$.
    
    We use  \eqref{eq: sft4} in the result of Lemma \ref{lem: rate_outer} to obtain (also noting the definitions of $\alpha$ and $R$)
    \begin{align}\label{eq: okt6}
        \sum_{k=0}^{K-1} \|R(x_k)\|^2 &\leq \frac{2}{\alpha^2}\|x_0-x^\star\|^2 + \sum_{k=0}^{K-1} 6 \varepsilon_k^2 + \frac{4}{\alpha}\sum_{k=0}^{K-1} 2\| x_0 - x^\star\|\varepsilon_k.
    \end{align}
    By using \eqref{eq: sft4} and \eqref{eq: bui5} in \eqref{eq: weakmvi_epsk_def} we also know the following upper bound on $\varepsilon_k$:
    \begin{equation*}
        \varepsilon_k \leq \frac{\|x_0-x^\star\|}{4\alpha(k+1)\log^2(k+2)}.
    \end{equation*}
    With this, \eqref{eq: okt6} becomes
    \begin{align}
        \sum_{k=0}^{K-1} \|R(x_k)\|^2 &\leq \frac{2}{\alpha^2}\|x_0-x^\star\|^2 + \sum_{k=0}^{K-1} \frac{3\|x_0-x^\star\|^2}{8\alpha^2 (k+1)^2\log^4(k+2)} + \sum_{k=0}^{K-1} \frac{2\|x_0-x^\star\|^2}{\alpha^2(k+1)\log^2(k+2)}\notag \\
        &<\frac{11}{\alpha^2} \| x_0-x^\star\|^2,\label{eq: ufg4}
    \end{align}
    since $\sum_{k=0}^{K-1}\frac{3}{8(k+1)^2\log^4(k+2)} < 2$ and $\sum_{k=0}^{K-1} \frac{2}{(k+1)\log^2(k+1)} < 7$. This establishes the first part of the assertion.

    We next see that, with $N_k$ set as in \Cref{alg:weakmvi}, we get the inexactness level specified by $\varepsilon_k$ in \eqref{eq: weakmvi_epsk_def} and we verify that the oracle complexity of each iteration is as claimed in the statement.

For the second part of the result, we proceed similar to the proof of \Cref{th: cohypo_det}. Namely, at iteration $k$, we  apply the result in Theorem \ref{th: fbf}. For this, let us identify the following from the definitions in \Cref{alg:weakmvi} 
\begin{align*}
    &A\equiv\eta G, ~~~B(\cdot)\equiv(\id+\eta F)(\cdot)-x_k, ~~~z_0 \equiv x_k,~~~z^\star \equiv J_{\eta(F+G)}(x_k), ~~~\zeta\equiv\varepsilon_k\\
    \Longrightarrow~&z_0 - z^\star = (\id-J_{\eta(F+G)})(x_k) = R(x_k).
\end{align*}
As before, we have that $B$ is $(1+\eta L)$-Lipschitz and $(1-\eta L)$-strongly monotone due to \Cref{fact: weakmvi}(iv). Existence of $z^\star$ is guaranteed by \Cref{fact: weakmvi}(iii) since $\eta < \frac{1}{L}$.

We now see that by the setting of $N_k$ from \Cref{alg:weakmvi} and definition of $\varepsilon_k$ in \eqref{eq: weakmvi_epsk_def}, we have
\begin{equation*}
N_k=\left\lceil \frac{4(1+\eta L)}{1-\eta L}\log(8(k+1)\log^2(k+2))\right\rceil = \left\lceil \frac{4(1+\eta L)}{1-\eta L}\log\frac{\|R(x_k)\|}{\varepsilon_k}\right\rceil.
\end{equation*}
With this value, \Cref{th: fbf} gives us 
\begin{equation*}
    \| \widetilde{J}_{\eta(F+G)}(x_k) - J_{\eta(F+G)}(x_k)\| \leq\varepsilon_k
\end{equation*}
as claimed.

Since each iteration of \Cref{alg:fbf} uses $2$ evaluations of $F$ and $1$ resolvent for $G$, the number of first-order oracle calls at iteration $k$ is $2N_k$ and the result follows.
\end{proof}
We continue with the proof of \Cref{cor: weakmvi_det} which follows trivially from \Cref{th: weakmvi_det}.
\begin{proof}[Proof of \Cref{cor: weakmvi_det}]
     Based on \Cref{th: weakmvi_det}, we have that after $K$ iterations where
     \begin{equation}\label{eq: wmi_k_limit}
        K \leq \left\lceil\frac{11\|x_0 - x^\star\|^2}{\eta^2\alpha^2\varepsilon^2}\right\rceil     
     \end{equation}
     we are guaranteed to obtain
    \begin{equation*}
         \min_{0\leq k \leq K-1} \eta^{-1}\|R(x_k)\|\leq\frac{1}{K}\sum_{k=0}^{K-1}\eta^{-1}\|R(x_k)\| \leq\varepsilon.
    \end{equation*}
    Total number of first-oracle calls during the run of the algorithm then be calculated as
\begin{equation*}
    \sum_{k=1}^K \left\lceil \frac{4(1+\eta L)}{1-\eta L}\log(8(k+2)\log^2(k+2))\right\rceil \leq K\cdot \left(\frac{4(1+\eta L)}{1-\eta L}\log(8(K+2)\log^2(K+2))+1\right).
\end{equation*}
We conclude after using \eqref{eq: wmi_k_limit}.
\end{proof}
\subsection{Additional Results}\label{sec: weakmvi_remark_exp}
Let us re-emphasize the strategy in the previous proof: we set a target value for $\varepsilon_k$ and then we prove that when we run the inner algorithm FBF for a certain \emph{computable} number of iterations $N_k$, the criterion enforced on $\widetilde{J}_{\eta(F+G)}$  by $\varepsilon_k$ is satisfied. However, this number of inner iterations is \emph{worst-case}. Another alternative, which could be more useful in practice is to set $\varepsilon_k$ to a computable value and monitor the progress of the inner algorithm and break when $\varepsilon_k$ is attained. One sidenote is that this is attainable in the deterministic case considered in this section, however it cannot be done in the stochastic case since the convergence guarantees are generally given in expectation.

This described strategy can  be made  rigorous with slight changes in the constants in our deterministic case. We now see this in the next proposition.

\begin{corollary}\label{cor: computable_eps}
    Let Assumptions \ref{asp:1} and \ref{asp:3} hold and let $G=\partial \iota_C$ for a convex closed set $C$. Let $\eta < \frac{1}{L}$ and $\rho < \eta$ in Algorithm \ref{alg:cohypo} with $\|\widetilde{J}_{\eta(F+G)}(x_k) - J_{\eta(F+G)}(x_k)\|\leq\frac{c}{k\log^2(k+2)}$ for any $c > 0$ and use \citep[Algorithm 4]{diakonikolas2020halpern} to obtain such $\widetilde{J}_{\eta(F+G)}(x_k)$ at iteration $k$. Then, we have that
    \begin{equation*}
        \frac{1}{K}\sum_{k=0}^{K-1} \eta^{-1}\| (\id-J_{\eta(F+G)})(x_k)\| \leq \varepsilon,
    \end{equation*}
    with the number of calls to evaluation of $F$ and resolvent of $G$ is bounded by $ \tilde O\left( \frac{(1+\eta L)((1+c)\|x_0 - x^\star\|^2+c^2)}{\varepsilon^2 (\eta-\rho)^2(1-\eta L)} \right)$.
\end{corollary}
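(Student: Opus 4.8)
The plan is to treat \Cref{cor: computable_eps} as the variant of \Cref{th: weakmvi_det} in which the inexactness tolerance is the non-adaptive, \emph{computable} sequence $\varepsilon_k=\frac{c}{k\log^2(k+2)}$ (with the $k=0$ term replaced by any fixed constant) instead of the adaptive choice $\varepsilon_k\propto\|x_k-J_{\eta(F+G)}(x_k)\|$ used there. First I would feed this $\varepsilon_k$ into \Cref{lem: wlk4} and \Cref{lem: rate_outer}. Since $\alpha=1-\frac{\rho}{\eta}\le1$ and the series $\sum_k\varepsilon_k$ and $\sum_k\varepsilon_k^2$ converge (comparison with $\sum 1/(k\log^2 k)$ and $\sum 1/(k^2\log^4 k)$), \Cref{lem: wlk4} telescopes to the uniform bound $\|x_k-x^\star\|\le\|x_0-x^\star\|+\sum_{k\ge0}\varepsilon_k=:D$ for all $k$, with $\sum_{k\ge0}\varepsilon_k=O(c)$, so $D=\|x_0-x^\star\|+O(c)$.

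Plugging these uniform bounds into \Cref{lem: rate_outer} then gives, for every $K\ge1$,
\[
\sum_{k=0}^{K-1}\|(\id-J_{\eta(F+G)})(x_k)\|^2 \;\le\; \frac{2\eta^2}{(\eta-\rho)^2}\|x_0-x^\star\|^2 + 6\sum_{k\ge0}\varepsilon_k^2 + \frac{4\eta}{\eta-\rho}\,D\sum_{k\ge0}\varepsilon_k \;=:\;\mathcal{C},
\]
a constant \emph{independent of $K$}; after elementary rebalancing ($x\le1+x^2$ on the cross term, $\eta>\eta-\rho$, and $\sum\varepsilon_k^2=O(c^2)$) one gets $\mathcal{C}/\eta^2=O\!\big(\tfrac{(1+c)\|x_0-x^\star\|^2+c^2}{(\eta-\rho)^2}\big)$. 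By Cauchy--Schwarz, $\frac1K\sum_{k=0}^{K-1}\eta^{-1}\|(\id-J_{\eta(F+G)})(x_k)\|\le\eta^{-1}\sqrt{\mathcal{C}/K}$, so the asserted averaged-residual bound $\le\varepsilon$ holds once $K\ge\mathcal{C}/(\eta^2\varepsilon^2)$, i.e.\ for $K=O\!\big(\tfrac{(1+c)\|x_0-x^\star\|^2+c^2}{(\eta-\rho)^2\varepsilon^2}\big)$ outer steps.

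The remaining work is the inner cost per step together with the justification that the criterion $\|\widetilde{J}_{\eta(F+G)}(x_k)-J_{\eta(F+G)}(x_k)\|\le\varepsilon_k$ is actually \emph{checkable}. By \Cref{fact: weakmvi}(iii)--(iv), computing $J_{\eta(F+G)}(x_k)$ is a monotone inclusion in which $\id+\eta F$ is $(1-\eta L)$-strongly monotone and $(1+\eta L)$-Lipschitz; when $G=\partial\iota_C$ its resolvent is $\proj{C}$, so the inclusion reduces to the projected equation $x=\proj{C}(x_k-\eta F(x))$, whose natural residual $x\mapsto x-\proj{C}(x_k-\eta F(x))$ costs one evaluation of $F$ and one projection and, by strong monotonicity of $\id+\eta F$, upper-bounds the distance to the true solution up to a factor $O((1-\eta L)^{-1})$. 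One therefore runs \citep[Algorithm 4]{diakonikolas2020halpern} (or \Cref{alg:fbf}), monitoring this residual, until the tolerance is met; using \Cref{fact: weakmvi}(ii) to write $\|(\id-J_{\eta(F+G)})(x_k)\|\le\alpha^{-1}D$ and the linear rate of \Cref{th: fbf}, this terminates within $\big\lceil\tfrac{c'(1+\eta L)}{1-\eta L}\log\tfrac{\alpha^{-1}D\,(k+1)\log^2(k+2)}{c}\big\rceil=\widetilde{O}\!\big(\tfrac{1+\eta L}{1-\eta L}\big)$ iterations, each with $O(1)$ first-order oracle calls; multiplying by the bound on $K$ gives the claimed complexity.

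The main obstacle I anticipate is precisely this a posteriori error certification: we never have $J_{\eta(F+G)}(x_k)$ in hand, so the tolerance $\varepsilon_k$ can only be enforced through a computable surrogate residual — which is exactly why the extra hypothesis $G=\partial\iota_C$ is imposed (it turns the subproblem into a projected equation with a cheap residual) and why $(1-\eta L)$-strong monotonicity is essential (to upgrade that residual into a genuine error bound). The only other departure from \Cref{th: weakmvi_det} is bookkeeping: because $\varepsilon_k$ is now non-adaptive, the telescoping no longer collapses the right-hand side of \Cref{lem: rate_outer} to a pure $\|x_0-x^\star\|^2$ term, which is the source of the $c$-dependence in the final bound.
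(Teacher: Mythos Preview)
Your proposal is correct and follows essentially the same approach as the paper: feed the computable $\varepsilon_k$ into \Cref{lem: wlk4} to get a uniform bound $\|x_k-x^\star\|\le\|x_0-x^\star\|+O(c)$, plug that into \Cref{lem: rate_outer} to obtain a $K$-independent right-hand side of the claimed form, and then invoke the $\widetilde O\bigl(\tfrac{1+\eta L}{1-\eta L}\bigr)$ inner complexity of \citep[Algorithm~4]{diakonikolas2020halpern}. The paper's proof is terser on the inner loop (it just cites \citep[Lemma~17]{diakonikolas2020halpern}), whereas you spell out why $G=\partial\iota_C$ makes the residual checkable; but the substance is the same.
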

\begin{remark}
    Note that \citep[Algorithm 4]{diakonikolas2020halpern} has a built-in stopping criterion to terminate the algorithm when the required accuracy is achieved. The value for $\varepsilon_k$ defined in this corollary is computable since it only depends on $k$ and a user-defined constant $c$. This is an alternative to \texttt{FBF} we used in the main text where we use a computable number of iterations to run the inner algorithm rather than using a stopping criterion as \citep[Algorithm 4]{diakonikolas2020halpern}. On the one hand, in practice, a stopping criterion can be more desirable since the worst-case number of iterations can be pessimistic.
    On the other hand, the strategy of using a stopping criterion is inherently more complicated in the stochastic case whereas using a worst-case computable number of inner iteration is still easily implementable. This is why we considered the latter setting throughout the paper. However, this corollary is still included for the former strategy.
\end{remark}
\begin{proof}[Proof of \Cref{cor: computable_eps}]
    We obtain the result by modifying the proof of \Cref{th: weakmvi_det}. We set
    \begin{equation*}
        \varepsilon_k = \frac{c}{(k+1)\log^2(k+2)},
    \end{equation*}
    for any $c>0$.
    
    By using this on \Cref{lem: wlk4} and summing the result for $k=0,1,\dots, K-1$ we obtain
    \begin{align*}
        \|x_k - x^\star\| &\leq \| x_0 -x^\star\|+\alpha\sum_{k=0}^{K-1}\frac{c}{(k+1)\log^2(k+2)} \\
        &\leq \| x_0 -x^\star\|+4\alpha c,
    \end{align*}
    since $\sum_{k=0}^{K-1} \frac{1}{(k+1)\log^2(k+2)}<4$.
    We use this bound on the result of \Cref{lem: rate_outer} to obtain
    \begin{align*}
        \sum_{k=0}^{K-1} \| R(x_k)\|^2 &\leq \frac{2}{\alpha^2}\|x_0 -x^\star\|^2 + \sum_{k=0}^{K-1} \frac{6c^2}{(k+1)^2\log^4(k+2)} + \frac{4}{\alpha}\sum_{k=0}^{K-1}\frac{c(\|x_0-x^\star\|+4\alpha c)}{(k+1)\log^2(k+2)}\\
        &\leq \left(\frac{2}{\alpha^2}+\frac{16c}{\alpha}\right)\|x_0 -x^\star\|^2+30c^2+64c^2,
    \end{align*}
    which gives the result after dividing by $\eta^2$ and noting that \citep[Lemma 17]{diakonikolas2020halpern} gives complexity $\widetilde{O}\left( \frac{1+\eta L}{1-\eta L} \right)$ for obtaining such a $\widetilde{J}_{\eta(F+G)}(x_k)$ with \citep[Algorithm 4]{diakonikolas2020halpern}.
\end{proof}
We continue with the result mentioned in \Cref{rem: xout}.
\begin{corollary}\label{cor: weakmvi_remark_exp}
Let Assumptions \ref{asp:1} and \ref{asp:3} hold. Let $\eta < \frac{1}{L}$ and $\rho < \eta$.
\begin{enumerate}[(i)]
    \item Let $G\equiv0$ and consider Algorithm \ref{alg:weakmvi}. Then we have that $\min_{0\leq k\leq K-1} \| F(x_k)\|\leq 2\varepsilon$ with the first-order oracle calls  bounded by 
    \begin{equation*}
        \widetilde{O}\left( \frac{(1+\eta L)\|x_0 - x^\star\|^2}{\varepsilon^2  \left(\eta-\rho\right)^2(1-\eta L)} \right).
    \end{equation*}   
    \item Let $G\equiv \partial \iota_C$ for a convex closed set $C\subseteq \mathbb{R}^d$. Given $\varepsilon > 0$, consider Algorithm \ref{alg:weakmvi} with the update $\widetilde{J}_{\eta(F+G)}(x_k)$ replaced with \citep[Algorithm 4]{diakonikolas2020halpern} with error criterion $\|\widetilde{J}_{\eta(F+G)}(x_k) - J_{\eta(F+G)}(x_k)\|\leq \frac{\eta\varepsilon^2}{(k+1)\log^2(k+3)}$. Then, for $x^{\text{out}}=\arg\min_{x\in\{x_0, \dots, x_{k-1}\}}\|x - \widetilde{J}_{\eta(F+G)}(x)\|$, we have that $\eta^{-1}\|(\id-J_{\eta(F+G)})(x^{\text{out}})\|\leq2\varepsilon+3\varepsilon^2$ with the first-order oracle calls  bounded by 
    \begin{equation}\label{eq: npo6}
        \widetilde{O}\left( \frac{(1+\eta L)\|x_0 - x^\star\|^2}{\varepsilon^2  \left(\eta-\rho\right)^2(1-\eta L)} \right).
    \end{equation}
\end{enumerate}
\end{corollary}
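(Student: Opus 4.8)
The plan is to reduce both items to the deterministic outer-loop machinery already developed in \Cref{sec: weak_mvi} and \Cref{sec: weakmvi_remark_exp} --- namely \Cref{th: weakmvi_det}, \Cref{cor: weakmvi_det}, \Cref{lem: wlk4}, \Cref{lem: rate_outer}, and the argument of \Cref{cor: computable_eps} --- the only new ingredient being to replace the non-computable fixed-point residual $\eta^{-1}\|(\id-J_{\eta(F+G)})(x_k)\|$ by a computable surrogate and to bound the gap between the two. In \emph{(i)} the surrogate is $\|F(x_k)\|$, which is already produced as the first forward evaluation inside the \texttt{FBF} call at iteration $k$ (with $z_0=x_k$ that step computes $B(z_0)=\eta F(x_k)$), so tracking its running minimum is free; in \emph{(ii)} the surrogate is $\|x_k-\widetilde J_{\eta(F+G)}(x_k)\|$, which is available as soon as the inner solver returns $\widetilde J_{\eta(F+G)}(x_k)$.

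For \emph{(i)}, I would start from the identity $J_{\eta F}(x_k)+\eta F(J_{\eta F}(x_k))=x_k$, i.e.\ $F(J_{\eta F}(x_k))=\eta^{-1}\big(x_k-J_{\eta F}(x_k)\big)$, and combine it with $L$-Lipschitzness of $F$ to obtain
\[
\|F(x_k)\|\le\|F(x_k)-F(J_{\eta F}(x_k))\|+\|F(J_{\eta F}(x_k))\|\le(1+\eta L)\,\eta^{-1}\|x_k-J_{\eta F}(x_k)\|<2\,\eta^{-1}\|x_k-J_{\eta F}(x_k)\|,
\]
the last step using $\eta<1/L$. Taking the minimum over $k$ and invoking \Cref{th: weakmvi_det}, which after $K=\lceil 11\|x_0-x^\star\|^2/((\eta-\rho)^2\varepsilon^2)\rceil$ outer iterations guarantees $\min_k\eta^{-1}\|x_k-J_{\eta F}(x_k)\|\le\varepsilon$, yields $\min_k\|F(x_k)\|<2\varepsilon$ for the computable choice $x^{\out}=\arg\min_k\|F(x_k)\|$; multiplying $K$ by the per-iteration cost $\widetilde O\big((1+\eta L)/(1-\eta L)\big)$ of \Cref{alg:weakmvi} recovers the stated complexity, exactly as in \Cref{cor: weakmvi_det}.

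For \emph{(ii)}, I would first rerun the outer analysis of \Cref{cor: computable_eps} (that is, \Cref{lem: wlk4} and \Cref{lem: rate_outer}) with the prescribed computable tolerance $\varepsilon_k=\frac{\eta\varepsilon^2}{(k+1)\log^2(k+3)}$. Since $\sum_k\varepsilon_k=O(\eta\varepsilon^2)$ and $\sum_k\varepsilon_k^2=O(\eta^2\varepsilon^4)$ are summable, \Cref{lem: wlk4} keeps $\|x_k-x^\star\|\le\|x_0-x^\star\|+O(\eta\varepsilon^2)$ bounded, and \Cref{lem: rate_outer} then gives $\sum_{k=0}^{K-1}\|(\id-J_{\eta(F+G)})(x_k)\|^2=O\big(\tfrac{\eta^2}{(\eta-\rho)^2}\|x_0-x^\star\|^2\big)$, hence $\min_k\eta^{-1}\|(\id-J_{\eta(F+G)})(x_k)\|\le\varepsilon$ after $K=\widetilde\Theta\big(\|x_0-x^\star\|^2/((\eta-\rho)^2\varepsilon^2)\big)$ iterations. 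The triangle inequality gives, for every $k$, $\big|\,\|x_k-\widetilde J_{\eta(F+G)}(x_k)\|-\|x_k-J_{\eta(F+G)}(x_k)\|\,\big|\le\varepsilon_k\le\eta\varepsilon^2$, so with $j=\arg\min_k\|x_k-\widetilde J_{\eta(F+G)}(x_k)\|$ and $k^\star=\arg\min_k\|x_k-J_{\eta(F+G)}(x_k)\|$ one chains
\[
\eta^{-1}\|(\id-J_{\eta(F+G)})(x_j)\|\le\eta^{-1}\|x_j-\widetilde J_{\eta(F+G)}(x_j)\|+\varepsilon^2\le\eta^{-1}\|x_{k^\star}-\widetilde J_{\eta(F+G)}(x_{k^\star})\|+\varepsilon^2\le\varepsilon+2\varepsilon^2\le2\varepsilon+3\varepsilon^2,
\]
which is the claimed bound for $x^{\out}=x_j$. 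Finally, \citep[Algorithm 4]{diakonikolas2020halpern}, whose built-in stopping test certifies the error $\varepsilon_k$ on the subproblem $0\in(\id+\eta(F+G))(x)-x_k$ --- which is $(1-\eta L)$-strongly monotone and $(1+\eta L)$-Lipschitz by \Cref{fact: weakmvi}(iv) --- needs only $\widetilde O\big((1+\eta L)/(1-\eta L)\big)$ oracle calls per outer step by \citep[Lemma 17]{diakonikolas2020halpern}; multiplying by $K$ gives \eqref{eq: npo6}.

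The main obstacle is not any single estimate but the accounting in \emph{(ii)}: one must verify that fixing $\varepsilon_k$ to a computable, $\|(\id-J_{\eta(F+G)})(x_k)\|$-independent sequence does not degrade the order of the complexity --- it does not, precisely because $\varepsilon_k=\widetilde O(\eta\varepsilon^2/k)$ is summable in the two senses ($\sum_k\varepsilon_k$ and $\sum_k\varepsilon_k^2$) that \Cref{lem: wlk4} and \Cref{lem: rate_outer} consume --- and that the additive $\eta\varepsilon^2$-level discrepancy between the computable surrogate and the true residual survives the two nested $\arg\min$'s, which the short chain above handles. Everything else is a transcription of the deterministic arguments already in \Cref{sec: weak_mvi_app}.
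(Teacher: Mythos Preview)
Your proposal is correct and follows essentially the same route as the paper: both parts reduce to \Cref{th: weakmvi_det}/\Cref{cor: computable_eps} and then pass from the non-computable residual to the computable surrogate via the resolvent identity plus Lipschitzness in (i), and via the pointwise error bound $\varepsilon_k\le\eta\varepsilon^2$ plus triangle inequality in (ii). Your chain in (ii) is in fact slightly tighter than the paper's --- by working directly with norms and the two nested $\arg\min$'s you obtain $\varepsilon+2\varepsilon^2$, whereas the paper detours through squared norms and Young's inequality to get $\sqrt{2(\varepsilon^2+\varepsilon^4)}+\varepsilon^2$ --- but the structure is the same.
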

See also \Cref{rem: opt_meas} for details on how we can use this result to further obtain a guarantee like $\dist(0, (F+G)(x^{\text{out}})) \leq \varepsilon$.
\begin{proof}[Proof of \Cref{cor: weakmvi_remark_exp}]
    \begin{enumerate}[(i)]
        \item In this case, we start from the final steps of the proof of \Cref{th: weakmvi_det} (see \eqref{eq: ufg4}) which, after using $R=\id-J_{\eta(F+G)}$, gives us that
        \begin{equation}\label{eq: npo4}
            \frac{1}{K}
            \sum_{k=0}^{K-1} \eta^{-2}\| x_k - J_{\eta F}(x_k)\|^2 \leq \varepsilon^2,
        \end{equation}
        with the prescribed complexity bound given in \Cref{cor: weakmvi_det}. Let us define $\bar x_k = J_{\eta F}(x_k)$. 
        
        On the one hand, we use the definition of resolvent to obtain
        \begin{align*}
            \bar x_k = J_{\eta F}(x_k) \iff \bar x_k + \eta F(\bar x_k) = x_k \iff x_k - \bar x_k = \eta F(\bar x_k),
        \end{align*}
        which, in view of \eqref{eq: npo4}, means that we have
        \begin{equation}\label{eq: npo5}
            \frac{1}{K}
            \sum_{k=0}^{K-1} \| F(\bar x_k)\|^2 \leq \varepsilon^2.
        \end{equation}
        On the other hand, we know by Young's inequality and Lipschitzness of $F$ that
        \begin{align*}
            \frac{1}{K}
            \sum_{k=0}^{K-1}\|F(x_k)\|^2 &\leq \frac{1}{K}
            \sum_{k=0}^{K-1}2\|F(\bar x_k)\|^2 + \frac{1}{K}
            \sum_{k=0}^{K-1} 2\| F(x_k) - F(\bar x_k)\|^2 \\
            &\leq \frac{1}{K}
            \sum_{k=0}^{K-1}2\|F(\bar x_k)\|^2 + \frac{1}{K}
            \sum_{k=0}^{K-1} 2L^2\| x_k - \bar x_k\|^2 \\
            &\leq (2+2\eta^2 L^2) \varepsilon ^2\\
            &< 4\varepsilon^2,
        \end{align*}
        where we used \eqref{eq: npo4} and \eqref{eq: npo5}.
        \item A slight modification of the proof of \Cref{cor: computable_eps} by using $\varepsilon_k = \frac{\eta\varepsilon^2}{(k+1)\log^2(k+3)}$ gives us that
        \begin{equation}\label{eq: soi4}
            \frac{1}{K} \sum_{k=0}^{K-1} \eta^{-2}\|(\id-J_{\eta(F+G)})(x_k)\|^2 \leq \varepsilon^2
        \end{equation}
        with the complexity bound \eqref{eq: npo6}. This is because \citep[Lemma 17]{diakonikolas2020halpern} showed that \citep[Algorithm 4]{diakonikolas2020halpern} outputs a $\widetilde{J}_{\eta(F+G)}(x_k)$ satisfying the requirement set by $\varepsilon_k=\frac{\eta\varepsilon^2}{(k+1)\log^2(k+3)}$, with the same worst-case complexity as \Cref{th: fbf}. The difference is that \citep[Algorithm 4]{diakonikolas2020halpern} has a computable stopping criterion (instead of the maximum number of iterations Algorithm \ref{alg:fbf} takes) where we can check if $\varepsilon_k=\frac{\eta\varepsilon^2}{(k+1)\log^2(k+3)}$ accuracy is achieved and break the loop. 
        
        Since we have the pointwise bound $\|J_{\eta(F+G)}(x_k) -\widetilde{J}_{\eta(F+G)}(x_k)\|^2\leq\eta^2\varepsilon^4$, we derive from \eqref{eq: soi4} that
        \begin{equation*}
            \frac{1}{K} \sum_{k=0}^{K-1} \eta^{-2}\|(\id-\widetilde{J}_{\eta(F+G)})(x_k)\|^2 \leq 2(\varepsilon^2+\varepsilon^4).
        \end{equation*}
        Hence, for $x^{\text{out}}$ defined in the statement, we get
        \begin{equation}\label{eq: soi5}
            \eta^{-2}\|(\id-\widetilde{J}_{\eta(F+G)})(x^{\text{out}})\|^2 \leq 2(\varepsilon^2+\varepsilon^4).
        \end{equation}
        Then, by using the pointwise bound $\|J_{\eta(F+G)}(x_k) -\widetilde{J}_{\eta(F+G)}(x_k)\|^2\leq\eta^2\varepsilon^4$ for all $k$, we know that
        \begin{align*}
            \eta^{-1}\|(\id-J_{\eta(F+G)})(x^{\text{out}}) \| &\leq \eta^{-1}\|(\id-\widetilde{J}_{\eta(F+G)})(x^{\text{out}}) \| + \eta^{-1}\|(J_{\eta(F+G)}-\widetilde{J}_{\eta(F+G)})(x^{\text{out}}) \| \\
            &\leq \varepsilon^2+\sqrt{2(\varepsilon^4+\varepsilon^2)} < 2\varepsilon+3\varepsilon^2,
        \end{align*}
        which uses \eqref{eq: soi5} and the implication of the error criterion $\|J_{\eta(F+G)}(x_k) - \widetilde{J}_{\eta(F+G)}(x_k)\|\leq\eta\varepsilon^2$, completing the proof.\qedhere
    \end{enumerate}
\end{proof}

\begin{table*}[t]
\centering\small
\begin{tabular}{ l|l|c c c c} 
 Assumption & Reference  & Limit of $\rho$ &Constraints& Oracle$^\dagger$  &Complexity \\
\hline
weak MVI 
&\cite{diakonikolas2021efficient} & $\frac{1}{4\sqrt{2}L}$ & $\times$ & Single & $O(\varepsilon^{-4})$ \\ [2mm]
&\cite{choudhury2023singlecall} & $\frac{1}{2L}$ & $\times$ & Single & $O(\varepsilon^{-4})$ \\ [2mm]
&\cite{bohm2022solving} & $\frac{1}{2L}$ & $\times$ &  Single & $O(\varepsilon^{-6})$ \\ [2mm]
&\cite{pethick2023solving} & $\frac{1}{2L}$ & $\checkmark$ & Multiple  & $\widetilde{O}(\varepsilon^{-4})$ \\ [2mm]
 & \Cref{th: weakmvi_stoc} & $\frac{1}{L}$ &$\checkmark$ &  Single  &$\widetilde{O}(\varepsilon^{-4})$ \\ [2mm]
\hline
cohypomonotone &
\cite{pethick2023stable} & $\frac{1}{2L}$ & $\checkmark$ &  Single & $\widetilde{O}(\varepsilon^{-6})$ (best)$^\ddagger$ \\ [2mm]
 &
\cite{pethick2023stable} & $\frac{1}{2L}$ & $\checkmark$ &  Single & $\widetilde{O}(\varepsilon^{-16})$ (last) \\ [2mm]
& \cite{chen2022near}$^*$ & $\frac{1}{2L}$ & $\times$ & Single & $\widetilde{O}(\varepsilon^{-2})$ \\ [2mm]
 & \Cref{cor: cohypo_stoc}$^*$ & $\frac{1}{L}$ &$\checkmark$ &  Single & $\widetilde{O}(\varepsilon^{-4})$ \\[2mm]
\hline
\end{tabular}
\caption{\small Comparison of first order algorithms for stochastic problems.
Complexity refers to the number of oracle calls to get the fixed point residual $\mathbb{E}\|(\id-J_{\eta(F+G)})(x^\out)\|\leq\varepsilon$. See also Remark \ref{rem: opt_meas}. $^\dagger$Oracle access refers to the number of operator evaluations algorithm makes with one random seed given $F(x)=\mathbb{E}_{\xi\sim\Xi}[F_\xi(x)]$. For example, ``Single" refers to algorithms that only access one sample per seed, i.e., only $F_{\xi_t}(x_t)$, ``Multiple" is for algorithms that access multiple samples per seed, i.e., $F_{\xi_t}(x_t)$ and $F_{\xi_t}(x_{t-1})$. Algorithms with ``Multiple" access also make the additional assumption that $\mathbb{E}_{\xi\sim \Xi}\|F_\xi(x) - F_\xi(y)\|^2 \leq L^2 \| x-y\|^2$ which is stronger than mere Lipschitzness of $F$. $^\ddagger$(best) refers to \emph{best iterate} in view of \Cref{eq: weakmvi_rem1}; (last) refers to a last iterate convergence rate. $^*$These works have complexity as \emph{expected} number of oracle calls due to the use of MLMC estimator. See also \Cref{sec: table_clarif} for derivations of the complexities when they are not written explicitly in the existing works.
}
\label{table:2}
\end{table*}

\section{Proofs for Section \ref{sec: extensions}}\label{sec: extensions_app}
\paragraph{Notation. } We use the following definitions for conditional expectations: For expectation conditioned on the filtration generated by the randomness of $x_k, \dots, x_1$, we use $\mathbb{E}_k[\cdot]$ while analyzing \Cref{alg:cohypo_stoc_app} and \Cref{alg:weakmvi_stoc}. In the notation of \Cref{alg:fbf_stoc}, we similarly use $\mathbb{E}_{t+1/2}[\cdot]$ for the expectation conditioned on the filtration generated by the randomness of $z_{t+1/2}, z_t, \dots, z_1, z_{1/2}$.
$\mathrm{Unif}$ denotes the uniform distribution and $\mathrm{Geom}$ denotes the geometric distribution.

\Cref{table:2} summarizes the existing works for stochastic min-max problems satisfying cohypomonotonicity or weak MVI conditions.

\subsection{Analysis of the inner loop for stochastic problems}
The main change for  algorithms in the stochastic case is computing the resolvent approximation $\widetilde{J}_{\eta(F+G)}(x_k)$. We now need to invoke $\texttt{FBF}$ with unbiased oracles for $F$, see for example \eqref{eq: stoc_cohypo_step}. For ease of reference, we specify the algorithm below. Note that \Cref{alg:cohypo_stoc_app} is precisely \Cref{alg:cohypo} when \eqref{eq: stoc_cohypo_step} is used for estimating the resolvent and \Cref{alg:fbf_stoc} is precisely \Cref{alg:fbf} when unbiased oracle $\widetilde{B}$ is inputted rather than full operator $B$. \Cref{alg:fbf_stoc} is a stochastic version of FBF, which is analyzed in the monotone case by \cite{bohm2022two}.

\begin{algorithm}[h]
\caption{Stochastic Inexact Halpern iteration for problems with cohypomonotonicity}
\begin{algorithmic}
    \STATE {\bfseries Input:} Parameters $\beta_k=\frac{1}{k+2}, \eta, L, \rho$, $\alpha=1-\frac{\rho}{\eta}$, $K>0$, initial iterate $x_0\in\mathbb{R}^d$,
    subroutine $\texttt{FBF}$ given in Algorithm \ref{alg:fbf_stoc} \\
    \vspace{.2cm}
    \FOR{$k = 0, 1, 2,\ldots, K-1$}
        \STATE $\widetilde{J}_{\eta(F+G)}(x_k) = \texttt{FBF}\left(x_k, N_k, \eta G, \id + \eta \widetilde{F}, 1+\eta L\right)$, where $N_k=\left\lceil \frac{1734(k+2)^3\log^2(k+2)}{(1-\eta L)^2}\right\rceil$
        \STATE $x_{k+1} = \beta_k x_0 + (1-\beta_k)((1-\alpha)x_k + \alpha\widetilde{J}_{\eta(F+G)}(x_k))$
        \ENDFOR
      \end{algorithmic}
\label{alg:cohypo_stoc_app}
\end{algorithm}
\begin{algorithm}[h]
\caption{$\texttt{FBF}(z_0, T, A, \widetilde{B}_{\mathrm{in}}, L_B)$ from \citep{tseng2000modified} -- Stochastic}
\begin{algorithmic}
    \STATE {\bfseries Input:} Parameter $\tau_t=\frac{2}{(t+1)\mu+6L_B}$, initial iterate $z_0\in\mathbb{R}^d$, $\widetilde{B}(\cdot) = \widetilde{B}_{\mathrm{in}}(\cdot) - z_0$ \\
    \vspace{.2cm}
    \FOR{$t = 0, 1, 2,\ldots, N-1 $}
        \STATE $z_{t+1/2} = J_{\tau_t A}(z_t - \tau_t \widetilde{B}(z_t)) $ 
        \STATE $z_{t+1} = z_{t+1/2} +\tau_t \widetilde{B}(z_t) - \tau_t \widetilde{B}(z_{t+1/2})$
        \ENDFOR
      \end{algorithmic}
\label{alg:fbf_stoc}
\end{algorithm}

More particularly, we solve the following stochastic strongly monotone inclusion problem:
\begin{equation*}
    \text{Find~} x^\star \in \mathbb{R}^d \text{~such that~} 0\in (A+B)(x^\star), \text{~where~} B=\mathbb{E}_{\xi\sim \Xi}[B_{\xi}].
\end{equation*}

Similar results to next theorem appeared in \cite{hsieh2019convergence,kotsalis2022simple}. We provide a proof for being complete and precise since we could not find a particular reference for stochastic FBF with strong monotonicity and explicit constants. It is also worth noting that we do not focus on optimizing the non-dominant terms. A tight bound for all the terms can be found in \cite{kotsalis2022simple} who analyzed a different algorithm.
\begin{theorem}\label{th: stoc_fbf}
    Let $z^\star=(A+B)^{-1}(0)\neq \emptyset$, the operator $B$ be $L_B$-Lipschitz and $\mu$-strongly monotone with $\mu>0$, $A$ be maximally monotone. Let $\widetilde{B}\colon\mathbb{R}^d\to\mathbb{R}^d$ satisfy $\mathbb{E}[\widetilde{B}(x)] = B(x)$ and $\mathbb{E}\|\widetilde{B}(x) - B(x)\|^2 \leq \sigma^2$. Then, we have that the last iterate of \Cref{alg:fbf_stoc} after running for $T$ iterations, when initialized with $z_0$, and step size $\tau_t = \frac{2}{(t+1)\mu+6L_B}$ satisfies the bound
    \begin{equation*}
        \mathbb{E}\|z_T - z^\star\|^2 \leq \frac{6L_B/\mu\|z_0-z^\star\|^2 + 48\sigma^2/\mu^2}{T+6L_B/\mu}.
    \end{equation*}
    Each iteration of the algorithm uses two evaluations of $\widetilde{B}$ and one resolvent of $A$
\end{theorem}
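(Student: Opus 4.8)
The plan is to establish a single one-step inequality $(1+\tau_t\mu)\,\mathbb{E}\|z_{t+1}-z^\star\|^2 \le \mathbb{E}\|z_t-z^\star\|^2 + 10\tau_t^2\sigma^2$ and then unroll it by an induction adapted to $\tau_t = \tfrac{2}{(t+1)\mu+6L_B}$. For the one-step bound I would run the deterministic FBF argument while tracking the noise through the right filtrations: write $\mathbb{E}_t$ for the conditional expectation given the history up to the draw of $\widetilde B(z_t)$, and $\mathbb{E}_{t+1/2}$ for the conditional expectation given the history up to the draw of $\widetilde B(z_{t+1/2})$, so that $z_{t+1/2}$ is $\mathbb{E}_{t+1/2}$-measurable. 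Fixing $z^\star$ with $-B(z^\star)\in A(z^\star)$, the resolvent step gives $z_t-\tau_t\widetilde B(z_t)-z_{t+1/2}\in\tau_t A(z_{t+1/2})$, so monotonicity of $A$ against $(z^\star,-B(z^\star))$ yields $\langle z_t-z_{t+1/2}-\tau_t\widetilde B(z_t)+\tau_tB(z^\star),\,z_{t+1/2}-z^\star\rangle\ge 0$. Expanding $\|z_{t+1}-z^\star\|^2$ via $z_{t+1}=z_{t+1/2}+\tau_t(\widetilde B(z_t)-\widetilde B(z_{t+1/2}))$ and the three-point identity for $\|z_{t+1/2}-z^\star\|^2$, adding and subtracting $\tau_tB(z^\star)$, and discarding the nonpositive term just produced, I obtain
\[
\|z_{t+1}-z^\star\|^2 \le \|z_t-z^\star\|^2 - \|z_t-z_{t+1/2}\|^2 - 2\tau_t\langle \widetilde B(z_{t+1/2})-B(z^\star),\,z_{t+1/2}-z^\star\rangle + \tau_t^2\|\widetilde B(z_t)-\widetilde B(z_{t+1/2})\|^2.
\]

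Next I would split $\widetilde B(z_{t+1/2})=B(z_{t+1/2})+(\widetilde B(z_{t+1/2})-B(z_{t+1/2}))$: the first piece produces $-2\tau_t\mu\|z_{t+1/2}-z^\star\|^2$ by $\mu$-strong monotonicity of $B$, while the second piece has $\mathbb{E}_{t+1/2}$-mean zero against the measurable vector $z_{t+1/2}-z^\star$ and hence drops after the tower expectation. For $\tau_t^2\|\widetilde B(z_t)-\widetilde B(z_{t+1/2})\|^2$ I use $\|a+b+c\|^2\le 3(\|a\|^2+\|b\|^2+\|c\|^2)$ with $a=\widetilde B(z_t)-B(z_t)$, $b=B(z_t)-B(z_{t+1/2})$, $c=B(z_{t+1/2})-\widetilde B(z_{t+1/2})$, together with $L_B$-Lipschitzness of $B$ and $\mathbb{E}\|\widetilde B(\cdot)-B(\cdot)\|^2\le\sigma^2$, giving the bound $6\sigma^2+3L_B^2\mathbb{E}\|z_t-z_{t+1/2}\|^2$. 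To convert $-2\tau_t\mu\,\mathbb{E}\|z_{t+1/2}-z^\star\|^2$ into a term in $\|z_{t+1}-z^\star\|^2$, I use $\|z_{t+1/2}-z^\star\|^2\ge\tfrac12\|z_{t+1}-z^\star\|^2-\tau_t^2\|\widetilde B(z_t)-\widetilde B(z_{t+1/2})\|^2$ and bound the extra variance the same way. Since $\tau_t\le\tfrac1{3L_B}$ (because $(t+1)\mu>0$) and $\mu\le L_B$ (from strong monotonicity and Lipschitzness), the coefficient of $\mathbb{E}\|z_t-z_{t+1/2}\|^2$ is $-1+3\tau_t^2L_B^2+6\tau_t^3\mu L_B^2\le -1+\tfrac13+\tfrac29<0$, so that term may be dropped, and the remaining noise is $(6+12\tau_t\mu)\tau_t^2\sigma^2\le 10\tau_t^2\sigma^2$; this is exactly the claimed recursion.

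For the unrolling, set $\beta:=6L_B/\mu$, so that $\tau_t\mu=\tfrac{2}{t+1+\beta}$, hence $1+\tau_t\mu=\tfrac{t+3+\beta}{t+1+\beta}$ and $10\tau_t^2\sigma^2=\tfrac{40\sigma^2/\mu^2}{(t+1+\beta)^2}$. I claim $\mathbb{E}\|z_t-z^\star\|^2\le\tfrac{D}{t+\beta}$ with $D:=\beta\|z_0-z^\star\|^2+48\sigma^2/\mu^2$, proved by induction on $t$: the base case is immediate since $D\ge\beta\|z_0-z^\star\|^2$, and in the inductive step, after substituting the hypothesis into the recursion and clearing denominators, the claim reduces to $40\sigma^2/\mu^2\le D\bigl(1-\tfrac1{t+\beta}\bigr)$ using $(t+3+\beta)(t+\beta)-(t+1+\beta)^2=t+\beta-1$; this holds because $1-\tfrac1{t+\beta}\ge 1-\tfrac1\beta\ge\tfrac56$ (as $\beta\ge 6$) and $D\ge 48\sigma^2/\mu^2$. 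Substituting $D$ and $\beta$ yields the bound in the statement, and the per-iteration oracle count is read off directly from Algorithm~\ref{alg:fbf_stoc}.

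The hardest part will be the bookkeeping rather than any single idea: getting the filtration argument right so the cross term with the fresh noise $\widetilde B(z_{t+1/2})-B(z_{t+1/2})$ genuinely has zero conditional mean, and then controlling the chain of constants (the three-way Young split, the bounds $\tau_t\le\tfrac1{3L_B}$ and $\mu\le L_B$, and the final $\tfrac56$ factor in the induction) tightly enough that the unrolled bound lands on the stated $6L_B/\mu$ and $48/\mu^2$ rather than something larger. A secondary subtlety is that the same sample $\widetilde B(z_t)$ is reused in both updates of an iteration (as in deterministic FBF), so its error term is not independent of $z_{t+1/2}$ and must be handled through the $\|a+b+c\|^2$ split rather than a mean-zero argument.
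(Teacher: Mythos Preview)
Your proposal is correct and follows essentially the same route as the paper: both derive the identical one-step recursion $(1+\tau_t\mu)\,\mathbb{E}\|z_{t+1}-z^\star\|^2 \le \mathbb{E}\|z_t-z^\star\|^2 + 10\tau_t^2\sigma^2$ (the paper via the polarization identity on $\langle z_{t+1}-z_t, z^\star-z_{t+1/2}\rangle$, you via directly expanding $\|z_{t+1}-z^\star\|^2$), handle the noise with the same three-way split, convert $\|z_{t+1/2}-z^\star\|^2$ to $\|z_{t+1}-z^\star\|^2$ by the same Young inequality, and finish with the same induction on $D/(t+\beta)$. The filtration handling and constant tracking you flag as the hard parts are exactly the places the paper also spends its care, and your bounds match theirs.
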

\begin{proof}
    Note that  the definition of $z_{t+1/2}$ implies $\tau_t A(z_{t+1/2}) \ni z_t - z_{t+1/2} - \tau_t B_{\xi_t}(z_t)$. The definition of $z^\star$ implies $\tau_t A(z^\star) \ni -\tau_t B(z^\star)$ By using this with monotonicity of $A$, we get
    \begin{align*}
        \langle z_{t+1/2} - z_t + \tau_t \widetilde{B}(z_t) - \tau_t B(z^\star), z^\star-z_{t+1/2} \rangle \geq 0.
    \end{align*}
    By the definition of $z_{t+1}$, we then have
    \begin{align}\label{eq: ghr4}
        \langle z_{t+1} - z_t + \tau_t \widetilde{B}(z_{t+1/2})- \tau_t B(z^\star), z^\star-z_{t+1/2} \rangle \geq 0.
    \end{align}
    By taking expectation conditioned on $z_{t+1/2}$ and also using strong monotonicity of $B$, we also have 
    \begin{align}
        \mathbb{E}_{t+1/2}\langle \tau_t \widetilde{B}(z_{t+1/2})- \tau_t B(z^\star),  z_{t+1/2} - z^\star \rangle &= \langle \tau_t B(z_{t+1/2})- \tau_t B(z^\star),  z_{t+1/2} - z^\star \rangle \notag\\
        &\geq \mu\tau_t \| z^\star - z_{t+1/2}\|^2 \notag\\
        &\geq \frac{\mu \tau_t}{2} \| z^\star - z_{t+1}\|^2 - \mu \tau_t \| z_{t+1} - z_{t+1/2}\|^2\notag \\
        &\geq \frac{\mu \tau_t}{2} \| z^\star - z_{t+1}\|^2 - \frac{1}{3} \| z_{t+1} - z_{t+1/2}\|^2,\label{eq: eqw1}
    \end{align}
    where the third step is by Young's inequality and last step is by the definition of $\tau_t$, i.e., $\tau_t \mu = \frac{2\mu}{(t+1)\mu+6L_B} \leq \frac{2\mu}{6L_B}\leq \frac{1}{3}$ since $\mu \leq L_B$.
    
    We have, by the elementary identities $\langle a,b \rangle = \frac{1}{2}\left( \| a\|^2 + \|b\|^2 - \| a-b\|^2\right) =\frac{1}{2}\left( -\| a\|^2 - \|b\|^2 + \| a+b\|^2\right)$, that
    \begin{align}
        \langle z_{t+1} - z_t, z^\star - z_{t+1/2} \rangle &= \langle z_{t+1} - z_t, z^\star - z_{t+1} \rangle + \langle z_{t+1} - z_t,  z_{t+1}-z_{t+1/2}  \rangle \notag \\
        &= \frac{1}{2} \left( \| z_t - z^\star \|^2 - \| z_{t+1} -z^\star \|^2 - \| z_t - z_{t+1/2}\|^2 + \| z_{t+1} - z_{t+1/2}\|^2 \right).\label{eq: eqw2}
    \end{align}
    Using \eqref{eq: eqw1} and \eqref{eq: eqw2} on \eqref{eq: ghr4} after taking total expectation, using tower rule and dividing both sides by $\tau_t$ gives
    \begin{align}
        \left( \frac{1}{2\tau_t} + \frac{\mu}{2} \right) \mathbb{E} \| z^\star - z_{t+1} \|^2 \leq \frac{1}{2\tau_t}\mathbb{E} \| z^\star - z_{t} \|^2 + \frac{5}{6\tau_t } \mathbb{E}\|z_{t+1} - z_{t+1/2}\|^2 - \frac{1}{2\tau_t} \mathbb{E}\|z_{t} - z_{t+1/2}\|^2.\label{eq: sor5}
    \end{align}    
    Definition of $z_{t+1}$ in \Cref{alg:fbf_stoc} gives
    \begin{align*}
        \frac{5}{6}\|z_{t+1} - z_{t+1/2} \| &= \frac{5\tau_t^2}{6} \| \widetilde{B}(z_t) - \widetilde{B}(z_{t+1/2}) \|^2 \\
        &\leq \frac{5\tau_t^2}{2} \left( \|  \widetilde{B}(z_t) - B(z_t)\|^2 + \| B(z_{t}) - B(z_{t+1/2})\|^2+ \|  B(z_{t+1/2}) - \widetilde{B}(z_{t+1/2})\|^2  \right) \\
        &\leq 5\tau_t^2 \sigma^2 + \frac{5\tau_t^2 L_B^2}{2} \| z_{t} - z_{t+1/2}\|^2,
    \end{align*}    
    where the last line is by the variance bound assumed on $\widetilde{B}$ and Lipschitzness of $B$.
    
    With this, we get in place of \eqref{eq: sor5} that
    \begin{align}
        \left( \frac{1}{2\tau_t} + \frac{\mu}{2} \right) \mathbb{E} \| z^\star - z_{t+1} \|^2 &\leq \frac{1}{2\tau_t}\mathbb{E} \| z^\star - z_{t} \|^2 + \frac{1}{\tau_t} \left(\frac{5\tau_t^2L_B^2}{2} - \frac{1}{2} \right)\mathbb{E}\|z_{t} - z_{t+1/2}\|^2+5\tau_t \sigma^2.\label{eq: lab4}
    \end{align}
    The definition of $\tau_t = \frac{2}{(t+1)\mu+6L_B}$ has two consequences:
    \begin{equation*}
        \frac{1}{2\tau_t} + \frac{\mu}{2} = \frac{6L_B+(t+3)\mu}{4}
    \end{equation*}
    and
    \begin{equation*}
        \tau_t = \frac{2}{(t+1)\mu+6L_B} \leq \frac{1}{3L_B} \Longrightarrow \tau_t^2 \leq \frac{1}{5L_B^2} \iff 5\tau_t^2 L_B^2 \leq 1.
    \end{equation*}
    This last estimate shows that the second term on the right-hand side of \eqref{eq: lab4} is nonpositive.
    
    Then, we obtain, after multiplying both sides of \eqref{eq: lab4} by $\left(\frac{1}{2\tau_t} + \frac{\mu}{2} \right)^{-1}=\frac{4}{6L_B+(t+3)\mu}$ that
    \begin{equation}\label{eq: kig5}
        \mathbb{E}\| z^\star - z_{t+1} \|^2 \leq \left(\frac{(t+1)\mu+6L_B}{(t+3)\mu+6L_B}\right) \| z^\star - z_{t} \|^2 + \frac{40\sigma^2}{(6L_B+(t+1)\mu)(6L_B+(t+3)\mu)}.
    \end{equation}
    We next show by induction that
    \begin{equation*}
        \mathbb{E}\|z^\star - z_t \|^2 \leq \frac{6L_B/\mu\|z_0-z^\star\|^2 + 48\sigma^2/\mu^2}{t+6L_B/\mu} ~~~ \forall t \geq 0.
    \end{equation*}
    For brevity, let us denote $\kappa = 6L_B/\mu$.

    The base case $t=0$ holds by inspection. Next we assume the assertion holds for $t = T$ and consider \eqref{eq: kig5} to deduce
    \begin{align*}
        &\mathbb{E}\| z^\star - z_{T+1} \|^2\\ &\leq \frac{T+1+\kappa}{T+3+\kappa} \frac{\kappa\|z_0-z^\star\|^2 + 48\sigma^2/\mu^2}{T + \kappa} + \frac{40\sigma^2/\mu^2}{(T+1+\kappa)(T+3+\kappa)} \\
        &= \left(\frac{(T+1+\kappa)}{(T+3+\kappa)(T+\kappa)} + \frac{1}{1.2(T+1+\kappa)(T+3+\kappa)}\right) \left( \kappa\|z_0-z^\star\|^2 + 48\sigma^2/\mu^2 \right).
    \end{align*}
    As a result, the inductive step will be implied by
    \begin{align*}
        \left(\frac{(T+1+\kappa)^2}{(T+1+\kappa)(T+3+\kappa)(T+\kappa)} + \frac{(T+\kappa)}{1.2(T+1+\kappa)(T+3+\kappa)(T+\kappa)}\right) \leq \frac{1}{T+1+\kappa},
    \end{align*}
    which, after letting $\nu = T+\kappa$, is equivalent to
    \begin{align*}
        \left(\frac{1.2(\nu+1)^2}{(\nu+3)\nu} + \frac{\nu}{(\nu+3)\nu}\right) \leq 1.2 \iff 1.2(\nu+1)^2 \leq 1.2\nu^2 + 2.6\nu \iff 1.2 \leq 0.2\nu \iff 6\leq \nu.
    \end{align*}
    This holds because $\nu = T+\kappa = T+6L_B/\mu \geq 6$ since $L_B/\mu > 1$.
    This completes the induction.
\end{proof}

\subsection{Stochastic Problem with Cohypomonotonicity}\label{sec: app_stoc_cohypo}
We have a stochastic version of \Cref{lem: wlk3} proof of which is almost equivalent.
\begin{lemma}\label{lem: stoc_bddness}
    Let Assumptions \ref{asp:1} and \ref{asp:2} hold. For the sequence $(x_k)$ generated by Algorithm \ref{alg:cohypo_stoc_app} with $\mathbb{E}_k\| J_{\eta(F+G)}(x_k) - \widetilde{J}_{\eta(F+G)}(x_k)\|^2 \le \varepsilon_k^2$, we have for $k \geq 0$ that
    \begin{equation*}
        \mathbb{E}\|x_{k+1}-x^\star \| \leq \|x_0 - x^\star \| + \frac{\alpha}{k+2}\sum_{i=0}^k (i+1) \mathbb{E}[\varepsilon_i].
    \end{equation*}
    \end{lemma}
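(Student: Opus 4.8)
The plan is to follow the deterministic argument of Lemma~\ref{lem: wlk3} essentially line by line, replacing the pathwise resolvent-error bound by its conditional second moment and then passing to expectations through the tower rule. Since $\rho < \eta$, Fact~\ref{fact: cohypo}(i)--(ii) gives that $J_{\eta(F+G)}$ is single-valued on $\mathbb{R}^d$, fixes $x^\star$, and is $\tfrac{1}{2\alpha}$-conically nonexpansive with $\alpha = 1-\tfrac{\rho}{\eta}$, so $J_{\eta(F+G)} = (1-\tfrac{1}{2\alpha})\id + \tfrac{1}{2\alpha}N$ for a nonexpansive $N$ with $Nx^\star = x^\star$. Since Algorithm~\ref{alg:cohypo_stoc_app} uses the same update rule as Algorithm~\ref{alg:cohypo}, the same manipulation (adding and subtracting $\alpha(1-\beta_k)J_{\eta(F+G)}(x_k)$) gives, pathwise,
\[
x_{k+1} = \beta_k x_0 + \tfrac{1-\beta_k}{2}x_k + \tfrac{1-\beta_k}{2}N(x_k) + \alpha(1-\beta_k)\bigl(\widetilde{J}_{\eta(F+G)}(x_k) - J_{\eta(F+G)}(x_k)\bigr),
\]
and then the triangle inequality together with $\|N(x_k) - x^\star\| \le \|x_k - x^\star\|$ yields
\[
\|x_{k+1}-x^\star\| \le \beta_k\|x_0-x^\star\| + (1-\beta_k)\|x_k-x^\star\| + \alpha(1-\beta_k)\,\|\widetilde{J}_{\eta(F+G)}(x_k) - J_{\eta(F+G)}(x_k)\|.
\]

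Next I would take the conditional expectation $\mathbb{E}_k[\cdot]$ of this inequality. The quantities $x_0$, $x_k$ (and hence $\varepsilon_k$, which in the applications of this lemma is a deterministic function of $x_k$) are measurable with respect to the $\sigma$-algebra $\mathcal{F}_k$ underlying $\mathbb{E}_k$, and by Jensen's inequality together with the hypothesis $\mathbb{E}_k\|\widetilde{J}_{\eta(F+G)}(x_k) - J_{\eta(F+G)}(x_k)\|^2 \le \varepsilon_k^2$ one has $\mathbb{E}_k\|\widetilde{J}_{\eta(F+G)}(x_k) - J_{\eta(F+G)}(x_k)\| \le \varepsilon_k$, so
\[
\mathbb{E}_k\|x_{k+1}-x^\star\| \le \beta_k\|x_0-x^\star\| + (1-\beta_k)\|x_k-x^\star\| + \alpha(1-\beta_k)\varepsilon_k.
\]
Applying the tower rule produces the deterministic recursion $\mathbb{E}\|x_{k+1}-x^\star\| \le \beta_k\|x_0-x^\star\| + (1-\beta_k)\mathbb{E}\|x_k-x^\star\| + \alpha(1-\beta_k)\mathbb{E}[\varepsilon_k]$, and substituting $\beta_k = \tfrac{1}{k+2}$ and running exactly the induction of Lemma~\ref{lem: wlk3} (with $\|x_i-x^\star\|$ replaced by $\mathbb{E}\|x_i-x^\star\|$ and $\varepsilon_i$ by $\mathbb{E}[\varepsilon_i]$) gives the claimed bound.

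The argument is essentially routine. The one point that needs care is the passage from $L^2$ to $L^1$ control of the resolvent error: the hypothesis bounds only the conditional \emph{second} moment, so Jensen's inequality must be invoked to recover $\mathbb{E}_k\|\cdot\| \le \varepsilon_k$ before the tower rule can be applied, and this is exactly why the final bound carries $\mathbb{E}[\varepsilon_i]$ rather than $(\mathbb{E}[\varepsilon_i^2])^{1/2}$. This step also relies on $\varepsilon_k$ being $\mathcal{F}_k$-measurable so that it passes through $\mathbb{E}_k$ unchanged; if one insisted on a random $\varepsilon_k$ not measurable w.r.t. $\mathcal{F}_k$, the same conclusion holds with $\mathbb{E}[\varepsilon_k]$ replaced by $\mathbb{E}[(\mathbb{E}_k\varepsilon_k^2)^{1/2}]$, but in the uses of this lemma $\varepsilon_k$ depends only on $x_k$, so no such refinement is needed.
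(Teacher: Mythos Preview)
Your proposal is correct and follows essentially the same approach as the paper's own proof: derive the pathwise inequality \eqref{eq: hoi4} from Lemma~\ref{lem: wlk3}, apply Jensen's inequality to pass from the conditional second-moment bound to $\mathbb{E}_k\|\widetilde{J}_{\eta(F+G)}(x_k)-J_{\eta(F+G)}(x_k)\|\le\varepsilon_k$, take total expectation via the tower rule, and run the same induction. Your additional remarks on the measurability of $\varepsilon_k$ are a helpful elaboration but not a departure from the paper's argument.
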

\begin{proof}
    The proof follows the same steps as \Cref{lem: wlk3} after taking expectation on \eqref{eq: hoi4} and using Jensen's inequality since
    \begin{align*}
    \mathbb{E}_k\left[\|\widetilde{J}_{\eta(F+G)}(x_k) -J_{\eta(F+G)}(x_k)\|\right]\leq \sqrt{\mathbb{E}_k\left[\|\widetilde{J}_{\eta(F+G)}(x_k) -J_{\eta(F+G)}(x_k)\|^2\right]}\leq\varepsilon_k.
    \end{align*}
    Hence the result follows by tower rule and the same induction as the proof of \Cref{lem: wlk3}.
\end{proof}
\begin{lemma}\label{eq: stoc_sfr4}
    Let Assumptions \ref{asp:1} and \ref{asp:2} hold. Consider Algorithm \ref{alg:cohypo_stoc_app} with $\mathbb{E}_k\|\widetilde{J}_{\eta(F+G)}(x_k) - J_{\eta(F+G)}(x_k) \|^2 \leq\varepsilon_k^2$. Then, we have for any $\gamma > 0$ and $K\geq 1$ that
    \begin{align*}
        \frac{\alpha K(K+1)}{4} \mathbb{E}\| R(x_K)\|^2 \leq \frac{K+1}{K\alpha} \| x^\star - x_0\|^2 + \sum_{k=0}^{K-1} \left(\frac{\alpha(\gamma+1)}{2\gamma}(k+1)(k+2)\mathbb{E}[\varepsilon_k^2]+\frac{\gamma\alpha\mathbb{E}\|R(x_k)\|^2}{2}\right).
    \end{align*}
\end{lemma}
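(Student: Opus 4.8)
The plan is to rerun the proof of \Cref{lem: cohypo_outer_mainlem} almost verbatim, carrying the resolvent error $\delta_k := \|\widetilde{J}_{\eta(F+G)}(x_k) - J_{\eta(F+G)}(x_k)\|$ (a random variable) through every step and postponing all expectations until after telescoping. Write $\alpha = 1-\frac{\rho}{\eta}$ and $R = \id - J_{\eta(F+G)}$. By \Cref{fact: cohypo}(ii), $R$ is $\alpha$-cocoercive (hence monotone) and $(1-\alpha)\id + \alpha J_{\eta(F+G)}$ is firmly nonexpansive. As in \eqref{eq: oog4}--\eqref{eq: eer87}, combining $\alpha$-cocoercivity of $R$ with the two representations \eqref{eq: bbn1}--\eqref{eq: bbn2} of $x_{k+1}-x_k$ gives, pathwise,
\begin{align*}
\frac{\alpha}{2}\|R(x_{k+1})\|^2 + \frac{\beta_k}{1-\beta_k}\langle R(x_{k+1}), x_{k+1}-x_0\rangle &\le \frac{\alpha}{2}(1-2\beta_k)\|R(x_k)\|^2 + \beta_k\langle R(x_k), x_k-x_0\rangle \\
&\quad + \alpha\langle R(x_{k+1})-(1-\beta_k)R(x_k),\, (\widetilde{J}_{\eta(F+G)}-J_{\eta(F+G)})(x_k)\rangle - \frac{\alpha}{2}\|R(x_{k+1})-R(x_k)\|^2.
\end{align*}
Cauchy--Schwarz and the triangle inequality bound the cross term (pathwise) by $\alpha(\|R(x_{k+1})-R(x_k)\| + \beta_k\|R(x_k)\|)\delta_k$, and Young's inequality $\alpha\|R(x_{k+1})-R(x_k)\|\delta_k \le \frac{\alpha}{2}\|R(x_{k+1})-R(x_k)\|^2 + \frac{\alpha}{2}\delta_k^2$ lets the square term cancel against $-\frac{\alpha}{2}\|R(x_{k+1})-R(x_k)\|^2$ --- the firm-nonexpansiveness gain highlighted after \eqref{eq: byu4}.

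The one new twist is the handling of the leftover term $\alpha\beta_k\|R(x_k)\|\delta_k = \frac{\alpha}{k+2}\|R(x_k)\|\delta_k$: rather than leaving it as in \Cref{lem: cohypo_outer_mainlem}, I would split it by Young's inequality with a $k$-dependent weight,
\[
\frac{\alpha}{k+2}\|R(x_k)\|\delta_k \le \frac{\alpha\gamma}{2(k+1)(k+2)}\|R(x_k)\|^2 + \frac{\alpha(k+1)}{2\gamma(k+2)}\delta_k^2,
\]
chosen so that, after multiplying the whole inequality by $(k+1)(k+2)$ and using $\frac{\beta_k}{1-\beta_k}(k+1)(k+2) = k+2$ and $(1-2\beta_k)(k+1)(k+2) = k(k+1)$, the $\|R(x_k)\|^2$ contribution becomes the $k$-independent $\frac{\alpha\gamma}{2}\|R(x_k)\|^2$, while the $\delta_k^2$ contributions total $\frac{\alpha}{2}(k+1)(k+2)\delta_k^2 + \frac{\alpha(k+1)^2}{2\gamma}\delta_k^2 \le \frac{\alpha(\gamma+1)}{2\gamma}(k+1)(k+2)\delta_k^2$ (using $(k+1)^2 \le (k+1)(k+2)$).

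Next I would telescope the resulting inequality over $k = 0,\dots,K-1$ --- the inner-product terms telescope at the pathwise level, so no measurability question arises --- leaving $\frac{\alpha K(K+1)}{2}\|R(x_K)\|^2 + (K+1)\langle R(x_K), x_K - x_0\rangle$ on the left. Bounding the inner product exactly as in \Cref{lem: cohypo_outer_mainlem} --- monotonicity of $R$ and $R(x^\star)=0$ (\Cref{fact: cohypo}(i)) give $\langle R(x_K), x_K - x_0\rangle \ge \langle R(x_K), x^\star - x_0\rangle$, then Young's gives $(K+1)\langle R(x_K), x^\star-x_0\rangle \ge -\frac{\alpha K(K+1)}{4}\|R(x_K)\|^2 - \frac{K+1}{K\alpha}\|x^\star-x_0\|^2$ --- yields the claimed inequality with $\delta_k^2$ in place of $\mathbb{E}[\varepsilon_k^2]$ and $\|R(x_k)\|^2$ in place of $\mathbb{E}\|R(x_k)\|^2$. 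Taking total expectation closes the argument: by the tower rule and the hypothesis $\mathbb{E}_k[\delta_k^2] \le \varepsilon_k^2$ one gets $\mathbb{E}[\delta_k^2] \le \mathbb{E}[\varepsilon_k^2]$, and the $\|R(x_k)\|^2$ terms pass through unchanged. I expect no real obstacle, only the bookkeeping of choosing the $k$-dependent Young's weight so that the constants match the statement, together with keeping every manipulation preceding the telescoping pathwise so that conditioning on the inner-loop randomness never needs to be invoked prematurely.
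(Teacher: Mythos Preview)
Your proposal is correct and follows essentially the same approach as the paper: both rerun the deterministic argument of \Cref{lem: cohypo_outer_mainlem}, replace the product bound $\alpha\beta_k\|R(x_k)\|\varepsilon_k$ by a Young's split, and then sum and take expectations using the tower rule. The only cosmetic difference is the Young parameter---the paper splits $\alpha\beta_k\|R(x_k)\|\delta_k \le \frac{\gamma\alpha\beta_k^2}{2}\|R(x_k)\|^2 + \frac{\alpha}{2\gamma}\delta_k^2$ and bounds $\frac{k+1}{k+2}\le 1$ after multiplying by $(k+1)(k+2)$, whereas you choose the weight so the $\|R(x_k)\|^2$ coefficient is exact and instead bound $(k+1)^2\le (k+1)(k+2)$ on the $\delta_k^2$ side; both routes give the stated constants.
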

\begin{proof}
    We follow the proof of \Cref{lem: cohypo_outer_mainlem} until \eqref{eq: eer87} and then we take expectation to obtain
    \begin{align}
    &\frac{\alpha}{2} \mathbb{E}\|R(x_{k+1})\|^2 + \frac{\beta_k}{1-\beta_k} \mathbb{E}\langle R(x_{k+1}), x_{k+1} - x_0 \rangle\notag \\
    &\leq \frac{\alpha}{2}\left( 1 - 2\beta_k\right)\mathbb{E}\|R(x_{k})\|^2 + \beta_k \mathbb{E}\langle R(x_{k}), x_{k} - x_0 \rangle\notag \\
    &\quad+\alpha \mathbb{E}\langle R(x_{k+1})-(1-\beta_k)R(x_k), ( \widetilde{J}_{\eta(F+G)} - J_{\eta(F+G)} )(x_k) \rangle - \frac{\alpha}{2} \mathbb{E}\| R(x_{k+1}) - R(x_k)\|^2.\label{eq: per87}
\end{align}
We then consider \eqref{eq: byu4} after taking expectation and using Cauchy-Schwarz, triangle and Young's inequalities to obtain
    \begin{align}
    &\alpha \mathbb{E}\langle R(x_{k+1}) - (1-\beta_k)R(x_k), (\widetilde{J}_{\eta(F+G)} - J_{\eta(F+G)})(x_k) \rangle \notag \\
    &\leq \alpha \mathbb{E}\left[\left( \|R(x_{k+1}) - R(x_k)\|+\beta_k\|R(x_k)\| \right)\|\widetilde{J}_{\eta(F+G)}(x_k) + J_{\eta(F+G)}(x_k)\|\right]\notag \\
    &\leq \frac{\alpha}{2}\mathbb{E}\|R(x_{k+1}) - R(x_k)\|^2 +\frac{\gamma\alpha\beta_k^2}{2}\mathbb{E}\|R(x_k)\|^2  +  \frac{\alpha}{2}\left(1+\frac{1}{\gamma}\right) \mathbb{E} \|\widetilde{J}_{\eta(F+G)}(x_k) - J_{\eta(F+G)}(x_k)\|^2 \notag \\
    &\leq \frac{\alpha}{2}\mathbb{E}\|R(x_{k+1}) - R(x_k)\|^2 +\frac{\gamma\alpha\beta_k^2}{2}\mathbb{E}\|R(x_k)\|^2  +  \frac{\alpha}{2}\left(1+\frac{1}{\gamma}\right) \mathbb{E}[\varepsilon_k^2],\label{eq: pjg4}
\end{align}
where the last step also used the tower rule along with the definition of $\varepsilon_k$.

We then use the same arguments as those after \eqref{eq: oer88} to get the result.
\end{proof}
\subsubsection{Proof for \Cref{cor: cohypo_stoc_main}}\label{subsubsec: app_halpern_stoc}
\Cref{cor: cohypo_stoc_main} is essentially the summary of the results proven below. 
\begin{theorem}\label{th: cohypo_sto}
    Let Assumptions \ref{asp:1}, \ref{asp:2}, and \ref{asp:4} hold. Let $\eta < \frac{1}{L}$ in Algorithm \ref{alg:cohypo_stoc_app} and $\rho < \eta$. 
    For any $k \geq 1$, we have that
    \begin{equation*}
        \frac{1}{\eta^2}\mathbb{E}\|x_k - J_{\eta(F+G)}(x_k)\|^2 \leq \frac{36(\|x_0-x^\star\|^2+\sigma^2)}{(\eta-\rho)^2k^2}.
    \end{equation*}
    The number of first-order oracles used at iteration $k$ of Algorithm \ref{alg:cohypo_stoc_app} is upper bounded by
    \begin{equation}\label{eq: stoc_num_fbf}
        2\left\lceil \frac{1734(k+2)^3\log^2(k+2)}{(1-\eta L)^2}\right\rceil.
    \end{equation}
\end{theorem}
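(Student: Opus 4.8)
The plan is to run the exact argument of \Cref{th: cohypo_det}, with the deterministic \texttt{FBF} guarantee (\Cref{th: fbf}) replaced by its stochastic counterpart (\Cref{th: stoc_fbf}) and the outer recursion (\Cref{lem: cohypo_outer_mainlem}) replaced by its stochastic version (\Cref{eq: stoc_sfr4}). Write $R=\id-J_{\eta(F+G)}$, $\alpha=1-\frac{\rho}{\eta}$, $\mu=1-\eta L$ and $L_B=1+\eta L$; note $\mu L_B=1-\eta^2L^2<1$ and $L_B<2$. At outer iteration $k$ the subproblem solved by \Cref{alg:fbf_stoc} is the one from the proof of \Cref{th: cohypo_det}: $A\equiv\eta G$ is maximally monotone, $B\equiv\id+\eta F$ is $\mu$-strongly monotone and $L_B$-Lipschitz by \Cref{fact: cohypo}(iv), $\widetilde{B}\equiv\id+\eta\widetilde{F}$ is unbiased for $B$ with $\mathbb{E}\|\widetilde{B}(x)-B(x)\|^2=\eta^2\mathbb{E}\|\widetilde{F}(x)-F(x)\|^2\le\eta^2\sigma^2$, the solution $z^\star\equiv J_{\eta(F+G)}(x_k)$ exists by \Cref{fact: cohypo}(iii), and with $z_0\equiv x_k$ one has $\|z_0-z^\star\|=\|R(x_k)\|$.

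First I would invoke \Cref{th: stoc_fbf} conditionally on $x_k$: after $N_k$ inner iterations,
\[
\mathbb{E}_k\|\widetilde{J}_{\eta(F+G)}(x_k)-J_{\eta(F+G)}(x_k)\|^2 \;\le\; \frac{(6L_B/\mu)\|R(x_k)\|^2+48\eta^2\sigma^2/\mu^2}{N_k+6L_B/\mu}\;=:\;\varepsilon_k^2 .
\]
Inserting $N_k\ge 1734(k+2)^3\log^2(k+2)/\mu^2$ and using $\mu L_B<1$, $L_B<2$ gives $\varepsilon_k^2\le\frac{6\|R(x_k)\|^2+48\eta^2\sigma^2}{1734(k+2)^3\log^2(k+2)}$, hence $(k+1)(k+2)\varepsilon_k^2\le\frac{6\|R(x_k)\|^2+48\eta^2\sigma^2}{1734(k+2)\log^2(k+2)}$. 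This already yields the second claim: each pass of \Cref{alg:fbf_stoc} uses two evaluations of $\widetilde{F}$ and one resolvent of $\eta G$, so iteration $k$ costs $2N_k$ first-order oracles, which is \eqref{eq: stoc_num_fbf}.

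Next I would feed this $\varepsilon_k$ (which is $\mathcal{F}_k$-measurable, so \Cref{eq: stoc_sfr4} applies with $\mathbb{E}[\varepsilon_k^2]$ in its statement) into \Cref{eq: stoc_sfr4} with a small absolute constant $\gamma$, and prove by induction on $k$ that $\mathbb{E}\|R(x_k)\|^2\le\frac{36\widehat{D}}{\alpha^2k^2}$ for $k\ge1$, where $\widehat{D}=\|x_0-x^\star\|^2+\sigma^2$ (I would assume w.l.o.g.\ $\eta\le1$, i.e.\ $L\ge1$, so that $\eta^2\sigma^2\le\sigma^2\le\widehat{D}$). The base cases $k=1,2$ follow directly from firm nonexpansiveness of $(1-\alpha)\id+\alpha J_{\eta(F+G)}$ (\Cref{fact: cohypo}(ii)) and the pointwise bound $\|R(x)\|\le\alpha^{-1}\|x-x^\star\|$ (from $\alpha$-cocoercivity of $R$ and $R(x^\star)=0$), together with the crude estimate $N_0,N_1=\Theta(\mu^{-2})$ that makes $\varepsilon_0,\varepsilon_1$ tiny multiples of $\alpha^{-1}\|x_0-x^\star\|+\eta\sigma$. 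For the inductive step, I substitute the induction hypothesis into the right-hand side of \Cref{eq: stoc_sfr4}: the term $\sum_k\frac{\gamma\alpha}{2}\mathbb{E}\|R(x_k)\|^2$ is controlled via $\sum_k k^{-2}<\pi^2/6$, and the term $\sum_k\frac{\alpha(\gamma+1)}{2\gamma}(k+1)(k+2)\mathbb{E}[\varepsilon_k^2]$ via the convergent series $\sum_k\frac{1}{(k+2)\log^2(k+2)}$ and $\sum_k\frac{1}{k^2(k+2)\log^2(k+2)}$ (plus $\eta^2\sigma^2\le\widehat{D}$ and $\alpha\le1$). Together with $\frac{K+1}{K}\le2$ this gives $\frac{\alpha K(K+1)}{4}\mathbb{E}\|R(x_K)\|^2\le\frac{C_\gamma\widehat{D}}{\alpha}$ for an absolute constant $C_\gamma$; choosing $\gamma$ so that $4C_\gamma\le36$ yields $\mathbb{E}\|R(x_K)\|^2\le\frac{4C_\gamma\widehat{D}}{\alpha^2K(K+1)}\le\frac{36\widehat{D}}{\alpha^2K^2}$, closing the induction. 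Dividing by $\eta^2$ and using $(\eta-\rho)^2=\eta^2\alpha^2$ and $\|x_k-J_{\eta(F+G)}(x_k)\|=\|R(x_k)\|$ gives the first claim. (\Cref{lem: stoc_bddness} is not needed for this statement, only for the subsequent corollary, but it illustrates the conditional-expectation bookkeeping used throughout.)

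The main obstacle is the constant bookkeeping: verifying that the value $1734$ hard-wired into $N_k$ and the chosen $\gamma$ are jointly compatible with closing the induction at the target constant $36$. This is exactly where the cubic-in-$k$ inner-iteration schedule is forced — the stochastic inner loop converges only at rate $O(1/N)$ (unlike the linear rate of deterministic \texttt{FBF}), so the tightened inexactness requirement $\varepsilon_k^2=\widetilde{O}(k^{-3})$ highlighted after \Cref{lem: cohypo_outer_mainlem} now demands $N_k=\widetilde{\Theta}(k^3)$ rather than $\widetilde{\Theta}(\log k)$, which is the source of the extra $\varepsilon^{-3}$ factor turning the $\widetilde{O}(\varepsilon^{-1})$ deterministic count into $\widetilde{O}(\varepsilon^{-4})$. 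A secondary technical point is consistently passing between conditional ($\mathbb{E}_k$) and total expectations, since $\varepsilon_k^2$ depends on the random iterate $x_k$; this is handled by the tower rule exactly as \Cref{eq: stoc_sfr4} and \Cref{lem: stoc_bddness} are phrased.
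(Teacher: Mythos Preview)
Your proposal is correct and tracks the paper's proof closely: the paper likewise feeds the stochastic \texttt{FBF} bound into \Cref{eq: stoc_sfr4} with the explicit choice $\gamma=\tfrac{1}{17}$ (whence $6\cdot 17^{2}=1734$, which is exactly the constant bookkeeping you flag), and then closes an induction on $\mathbb{E}\|R(x_k)\|^2\le 36(\|x_0-x^\star\|^2+\sigma^2)/(\alpha^2(k+1)^2)$ by the same numerical estimates you indicate. One minor correction to your parenthetical: the paper \emph{does} invoke \Cref{lem: stoc_bddness} for the base case (though only its one-step recursion is actually needed), and your normalization $\eta\le 1$ is precisely what the paper tacitly uses when it writes $\sigma^2$ rather than $\eta^2\sigma^2$ in the inner error bound.
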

\begin{corollary}\label{cor: cohypo_stoc}
    Let Assumptions \ref{asp:1} and \ref{asp:2} hold. Let $\eta < \frac{1}{L}$ in Algorithm \ref{alg:cohypo_stoc_app} and $\rho < \eta$. For any $\varepsilon >0$, we have that $\mathbb{E}\left[\eta^{-1}\|x_k - J_{\eta(F+G)}(x_k)\|\right] \leq \varepsilon$ with stochastic first-order oracle complexity
    \begin{equation*}
        \widetilde{O}\left( \frac{\|x_0-x^\star\|^4+\sigma^4}{(\eta-\rho)^4(1-\eta L)^2\varepsilon^4} \right)
    \end{equation*}
\end{corollary}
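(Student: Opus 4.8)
The plan is to prove Theorem~\ref{th: cohypo_sto} first; Corollary~\ref{cor: cohypo_stoc} then follows from it exactly as Corollary~\ref{cor: cohypo_det} follows from Theorem~\ref{th: cohypo_det}. The argument mirrors Section~\ref{sec: halpern} with two substitutions: the outer loop is analyzed through the stochastic Halpern recursion of Lemma~\ref{eq: stoc_sfr4} instead of Lemma~\ref{lem: cohypo_outer_mainlem}, and the resolvent subproblem is solved by the stochastic FBF of Algorithm~\ref{alg:fbf_stoc} with the guarantee of Theorem~\ref{th: stoc_fbf} instead of the linearly convergent Theorem~\ref{th: fbf}. Write $\alpha=1-\tfrac{\rho}{\eta}$ and $R=\id-J_{\eta(F+G)}$. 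Fact~\ref{fact: cohypo} needs only Assumptions~\ref{asp:1}--\ref{asp:2} with $\rho<\eta<\tfrac1L$, so it is unaffected by the stochastic oracle: it gives $\alpha$-cocoercivity of $R$ (hence $\|R(x)\|\le\tfrac1\alpha\|x-x^\star\|$), firm nonexpansiveness of $(1-\alpha)\id+\alpha J_{\eta(F+G)}$, and that computing $J_{\eta(F+G)}(x_k)$ is the $(1-\eta L)$-strongly monotone, $(1+\eta L)$-Lipschitz inclusion $0\in(\id+\eta F)(x)+\eta G(x)-x_k$, solved by FBF started at $z_0=x_k$ with $z^\star-z_0=-R(x_k)$. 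Since $\widetilde B=\id+\eta\widetilde F$ satisfies $\mathbb{E}\|\widetilde B(x)-B(x)\|^2=\eta^2\mathbb{E}\|\widetilde F(x)-F(x)\|^2\le\eta^2\sigma^2$, Theorem~\ref{th: stoc_fbf} applies with $\mu=1-\eta L$ and $L_B=1+\eta L$.

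The first step turns the choice $N_k=\lceil 1734(k+2)^3\log^2(k+2)(1-\eta L)^{-2}\rceil$ into an inexactness bound: plugging $\mu$, $L_B$, $z_0-z^\star=R(x_k)$ and variance $\eta^2\sigma^2$ into Theorem~\ref{th: stoc_fbf}, and using $(1+\eta L)(1-\eta L)<1$, yields a conditional second-moment bound $\mathbb{E}_k\|\widetilde J_{\eta(F+G)}(x_k)-J_{\eta(F+G)}(x_k)\|^2\le\varepsilon_k^2$ with $\varepsilon_k^2=\tfrac{c_1\|R(x_k)\|^2+c_2\eta^2\sigma^2}{(k+2)^3\log^2(k+2)}$ for small absolute constants $c_1,c_2$ (coming from $6/1734$ and $48/1734$). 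The cubic growth of $N_k$ is exactly what the slow $O(1/N)$ rate of stochastic FBF forces in order to make $\varepsilon_k^2$ decay like $\widetilde O(k^{-3})$ up to the $\|R(x_k)\|^2$ factor, which in turn is what makes the bottleneck sum $\sum_k(k+1)(k+2)\varepsilon_k^2$ in Lemma~\ref{eq: stoc_sfr4} converge --- the stochastic counterpart of the tighter-inexactness observation following Lemma~\ref{lem: cohypo_outer_mainlem}.

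The second step feeds this $\varepsilon_k$ (which is measurable with respect to the randomness through iteration $k$, hence an admissible random bound in Lemma~\ref{eq: stoc_sfr4}) into that lemma and closes an induction proving $\mathbb{E}\|R(x_k)\|^2\le\tfrac{M}{\alpha^2(k+1)^2}$ with $M=36(\|x_0-x^\star\|^2+\sigma^2)$. Taking unconditional expectations, $(k+1)(k+2)\mathbb{E}[\varepsilon_k^2]\le\tfrac{c_1\mathbb{E}\|R(x_k)\|^2+c_2\eta^2\sigma^2}{(k+2)\log^2(k+2)}$; using the finiteness of $\sum_k\tfrac1{(k+2)\log^2(k+2)}$ and $\sum_k\tfrac1{(k+1)(k+2)^2\log^2(k+2)}$, the control of $\sum_k\mathbb{E}\|R(x_k)\|^2$ by the induction hypothesis, and a sufficiently small $\gamma>0$ in Lemma~\ref{eq: stoc_sfr4}, the right-hand side of that lemma becomes at most $\tfrac2\alpha\|x_0-x^\star\|^2$ plus a strictly-below-threshold multiple of $\tfrac{M}{\alpha}$ and a multiple of $\tfrac{\eta^2\sigma^2}{\alpha}$; matching against $\tfrac{\alpha K(K+1)}{4}\cdot\tfrac{M}{\alpha^2(K+1)^2}$ and using $\tfrac{K}{K+1}\ge\tfrac23$ closes the inductive step, and the base case $k=1$ follows from the explicit update at $k=0$, firm nonexpansiveness of $(1-\alpha)\id+\alpha J_{\eta(F+G)}$, and Lemma~\ref{lem: stoc_bddness}. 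The per-iteration count $2N_k$ is immediate since each FBF step uses two evaluations of $\widetilde F$ and one resolvent of $G$. Finally, for Corollary~\ref{cor: cohypo_stoc}, Jensen's inequality gives $\eta^{-1}\mathbb{E}\|R(x_K)\|\le\tfrac{6\sqrt{\|x_0-x^\star\|^2+\sigma^2}}{(\eta-\rho)K}$, so $K=O(\varepsilon^{-1})$ outer iterations suffice, and $\sum_{k=1}^K 2N_k=\widetilde O(K^4(1-\eta L)^{-2})=\widetilde O\big(\tfrac{\|x_0-x^\star\|^4+\sigma^4}{(\eta-\rho)^4(1-\eta L)^2\varepsilon^4}\big)$.

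The main obstacle will be the constant bookkeeping in the induction: because $\varepsilon_k$ carries a $\|R(x_k)\|^2$ term, Lemma~\ref{eq: stoc_sfr4} turns into a recursion with $\mathbb{E}\|R(x_k)\|^2$ on both sides, and one must verify that after invoking the induction hypothesis the aggregate coefficient of $M/\alpha$ stays strictly under the value required to reproduce $\tfrac{M}{\alpha^2(K+1)^2}$ --- this is where the specific constant $1734$ (which pins down $c_1$) and a small enough $\gamma$ are needed, and where one must check that the $\eta^2\sigma^2$ contributions together with $\|x_0-x^\star\|^2$ genuinely fit inside the advertised $36$. Everything else --- the inner-loop estimate, Jensen, and summing $N_k$ --- is routine.
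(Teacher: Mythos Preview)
Your proposal is correct and follows essentially the same route as the paper. The paper's proof of Corollary~\ref{cor: cohypo_stoc} is a single sentence invoking Theorem~\ref{th: cohypo_sto}; your sketch of that theorem---stochastic FBF (Theorem~\ref{th: stoc_fbf}) to certify $\varepsilon_k^2\propto(\|R(x_k)\|^2+\sigma^2)/((k+2)^3\log^2(k+2))$, Lemma~\ref{eq: stoc_sfr4} for the outer recursion, and an induction on $\mathbb{E}\|R(x_k)\|^2\le 36(\|x_0-x^\star\|^2+\sigma^2)/(\alpha^2(k+1)^2)$---matches the paper's argument step for step, including the identification of the constant-bookkeeping induction as the only nontrivial point (the paper uses $\gamma=1/17$, hence $289=17^2$ and $1734=6\cdot 289$).
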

\begin{proof}
    This corollary immediately follows from \Cref{th: cohypo_sto} by combining the number of outer iterations and the number of stochastic first-order oracle calls for each outer iteration.
\end{proof}
\begin{remark}
    The complexity in the previous corollary has the same dependence on $\|x_0-x^\star\|, \sigma$ as \cite{pethick2023solving,bravo2024stochastic}. As we see in the next remark, the dependence on $(\eta-\rho)$ can be improved by using the knowledge of the target accuracy $\varepsilon$ and the variance upper bound $\sigma^2$ as done in \cite{diakonikolas2021efficient,lee2021fast,chen2022near}.
\end{remark}
\begin{remark}
By using parameters depending on target accuracy $\varepsilon$ and noise variance $\sigma^2$, we can improve the complexity to
\begin{equation*}
        \widetilde{O}\left( \frac{\|x_0-x^\star\|^2\sigma^2}{(\eta-\rho)^2(1-\eta L)^2\varepsilon^4} \right)
    \end{equation*}
\end{remark}
\begin{proof}[Proof of \Cref{th: cohypo_sto}]
    Let us set 
\begin{equation}\label{eq: smk4}
    \varepsilon_k^2 = \frac{\gamma^2 (\alpha^2\|R(x_k)\|^2+8\sigma^2)}{\alpha^2(k+2)^{3}\log^2(k+2)}
\end{equation}
and plug this in to the result of Lemma \ref{eq: stoc_sfr4} to obtain
\begin{align}
    &\frac{\alpha K(K+1)}{4} \mathbb{E} \| R(x_K)\|^2 \notag\\
    &\leq \frac{K+1}{K\alpha} \| x_0 - x^\star \|^2 + \sum_{k=0}^{K-1} \mathbb{E}\left( \frac{(\gamma^2 +\gamma)(\alpha^2\|R(x_k)\|^2+8\sigma^2)}{2\alpha (k+2)\log^2(k+2)} +  \frac{\gamma\alpha\|R(x_k)\|^2}{2}\right)\notag \\
    &= \frac{K+1}{K\alpha} \| x_0 - x^\star \|^2 + \sum_{k=0}^{K-1} \left( \frac{4(\gamma^2+\gamma) \sigma^2}{\alpha (k+2)\log^2(k+2)} + \frac{\alpha(\gamma^2+\gamma)\mathbb{E}\|R(x_k)\|^2}{2(k+2)\log^2(k+2)} + \frac{\gamma\alpha\mathbb{E}\|R(x_k)\|^2}{2}\right)\notag\\
    &< \frac{K+1}{K\alpha} \| x_0 - x^\star \|^2 +\frac{12(\gamma^2+\gamma)\sigma^2}{\alpha}+ \sum_{k=0}^{K-1} \left( \frac{\alpha(\gamma^2+\gamma)\mathbb{E}\|R(x_k)\|^2}{2(k+2)\log^2(k+2)} + \frac{\gamma\alpha\mathbb{E}\|R(x_k)\|^2}{2}\right),\label{eq: hnb5}
\end{align}
since $\sum_{k=0}^{K-1} \frac{1}{(k+2)\log^2(k+2)}<3$.

We now show by induction that 
\begin{equation*}
    \mathbb{E}\|R(x_k)\|^2 \leq \frac{36(\|x_0-x^\star\|^2+\sigma^2)}{\alpha^2(k+1)^2}.
\end{equation*}
The base case for the induction with $K=0, 1$ hold the same way as the proof of \Cref{th: cohypo_det} where the only change is we use \Cref{lem: stoc_bddness} and the definition of $\varepsilon_k$ in \eqref{eq: smk4}, see also \eqref{eq: base_case_halpern}.

Let us consider \eqref{eq: hnb5} for $K\geq 2$ and assume that the assertion holds for $k \leq K-1$. We then have that
\begin{align*}
    &\frac{\alpha K(K+1)}{4} \mathbb{E}\|R(x_K)\|^2 \\
    &\leq \frac{2}{\alpha} \| x_0 - x^\star \|^2 +\frac{12(\gamma^2+\gamma)\sigma^2}{\alpha}+ \sum_{k=0}^{K-1} \left( \frac{18(\gamma^2+\gamma)(\|x_0-x^\star\|^2+\sigma^2)}{\alpha(k+2)(k+1)^2\log^2(k+2)} + \frac{18\gamma(\|x_0-x^\star\|^2+\sigma^2)}{\alpha(k+1)^2}\right),
\end{align*}
where we also used $\frac{K+1}{K}\leq 2$.

By using $\sum_{k=0}^\infty \frac{18}{(k+1)^2} < 30$ and $\sum_{k=0}^\infty \frac{18}{(k+2)(k+1)^2\log^2(k+2)} < 21$ and $\gamma = \frac{1}{17}$, we have that
\begin{align*}
    \frac{\alpha K(K+1)}{4}\mathbb{E}\|R(x_K)\|^2 \leq \frac{6(\|x_0-x^\star\|^2 + \sigma^2)}{\alpha}.
\end{align*}
We use $\frac{1}{K(K+1)}\leq \frac{1.5}{(K+1)^2}$ which holds for $K\geq 2$ to complete the induction.

To see the number of first-order oracles, we use the result for stochastic FBF in \Cref{th: stoc_fbf}. For our subproblem at iteration $k$, this result implies
\begin{align*}
    \mathbb{E}_k\left[ \| \widetilde{J}_{\eta(F+G)}(x_k) - J_{\eta(F+G)}(x_k)\|^2\right] &\leq \frac{6\left( \frac{1+\eta L}{1-\eta L}\|x_k - J_{\eta(F+G)}(x_k)\|^2 + \frac{8\sigma^2}{(1-\eta L)^2} \right)}{N_k} \\
    &\leq \frac{\frac{6}{(1-\eta L)^2}\left( \|x_k - J_{\eta(F+G)}(x_k)\|^2 + 8\sigma^2 \right)}{N_k}.
\end{align*}
Recall that \eqref{eq: smk4}, with $\gamma = \frac{1}{17}$ and $R=\id-J_{\eta(F+G)}$, requires
\begin{equation*}
    \mathbb{E}_k\left[ \| \widetilde{J}_{\eta(F+G)}(x_k) - J_{\eta(F+G)}(x_k)\|^2\right] \leq \frac{ (\alpha^2\|(\id-J_{\eta(F+G)})(x_k)\|^2+8\sigma^2)}{289 \alpha^2(k+2)^{3}\log^2(k+2)}
\end{equation*}
Noting that $\frac{1}{\alpha^2} > 1$, a sufficient condition to attain this requirement is
\begin{equation*}
\frac{\frac{6}{(1-\eta L)^2}\left( \|x_k - J_{\eta(F+G)}(x_k)\|^2 + 8\sigma^2 \right)}{N_k} \leq \frac{ \|x_k - J_{\eta(F+G)}(x_k)\|^2+8\sigma^2}{289 (k+2)^{3}\log^2(k+2)},
\end{equation*}
verifying the required number of iterations $N_k$ as given in \Cref{alg:cohypo_stoc_app} to be sufficient for the inexactness criterion. Since each iteration of FBF takes 2 stochastic operator evaluations $\widetilde{F}$ and one resolvent of $G$, we have the result.
\end{proof}

\subsection{Stochastic Problem with weak MVI condition}\label{sec: app_stoc_wmi}
As motivated in \Cref{sec: main_stoc_wmi}, we use the multilevel Monte Carlo (MLMC) estimator \citep{giles2008multilevel,blanchet2015unbiased,asi2021stochastic,hu2021bias}. In \Cref{sec: main_stoc_wmi}, we only sketched the main changes in \Cref{alg:weakmvi} because of space limitations. We start by explicitly writing down the algorithm with MLMC estimator.

\begin{algorithm*}[h]
\caption{Inexact KM iteration for problems with weak MVI}
\begin{algorithmic}
    \STATE {\bfseries Input:} Parameters $\eta, L, \rho$, $\alpha=1-\frac{\rho}{\eta}$, $\alpha_k = \frac{\alpha}{\sqrt{k+2}\log(k+3)}$ $K>0$, initial iterate $x_0\in\mathbb{R}^d$, subroutine $\texttt{MLMC{-}FBF}$ given in Algorithm \ref{alg:fbf_mlmc} \\
    \vspace{.2cm}
    \FOR{$k = 0, 1, 2,\ldots, K-1 $}
        \STATE $N_k = \lceil \frac{96(1-\eta L)^{-2}}{\min\{\frac{\alpha_k}{120\alpha(k+1)}, \frac{1}{120}\}} \rceil$ and $M_k =\lceil \frac{672\times 120(\log_2 N_k)}{(1-\eta L)^2} \rceil$
        \STATE $\widetilde{J}^{(m)}_{\eta(F+G)}(x_k) = \texttt{MLMC{-}FBF}\left(x_k, N_k, \eta G, \id + \eta \widetilde{F},1+\eta L\right)$ independently for each $m=1,\dots, M_k$
        \STATE $\widetilde{J}_{\eta(F+G)}(x_k) = \frac{1}{M_k}\sum_{i=1}^{M_k} \widetilde{J}_{\eta(F+G)}^{(i)}(x_k)$
        \STATE $x_{k+1} = (1-\alpha_k)x_k + \alpha_k \widetilde{J}_{\eta(F+G)}(x_k)$
        \ENDFOR
      \end{algorithmic}
\label{alg:weakmvi_stoc}
\end{algorithm*}

\begin{algorithm}[h]
\caption{$\texttt{MLMC{-}FBF}(z_0, N, A, B, L_B)$}
\begin{algorithmic}
    \STATE {\bfseries Input:} Initial iterate $z_0\in\mathbb{R}^d$, subsolver $\texttt{FBF}$ from \Cref{alg:fbf_stoc} \\
    \vspace{.2cm}
    \STATE Define $y^i = \texttt{FBF}(z_0, 2^i, \widetilde{B}, A, L_B)$ for any $i\geq 0$. Draw $I\sim \mathrm{Geom}(1/2)$\\
    \vspace{.2cm}
    \STATE \textbf{Output:} $y^{\out} = y^0 + 2^I(y^I - y^{I-1})$ if $2^I \leq N$, otherwise $y^{\text{out}} = y^0$.
      \end{algorithmic}
\label{alg:fbf_mlmc}
\end{algorithm}

We start by modifying the proof of \Cref{lem: rate_outer} for the stochastic problem, which is the most important for getting the final complexity.

\begin{lemma}\label{lem: stoc_mvi_iter_compl}
Let Assumptions \ref{asp:1} and \ref{asp:3} hold. Suppose that the iterates generated by \Cref{alg:weakmvi_stoc} satisfy $\mathbb{E}_k\|\widetilde{J}_{\eta(F+G)}(x_k) - J_{\eta(F+G)}(x_k) \|^2 \leq\varepsilon_{k,v}^2$ and $\|\mathbb{E}_k[\widetilde{J}_{\eta(F+G)}(x_k)] - J_{\eta(F+G)}(x_k)\| \leq \varepsilon_{k, b}$.
Then, we have that
\begin{align*}
\frac{\alpha}{4}\sum_{k=0}^{K-1}\alpha_k \mathbb{E}\|(\id - J_{\eta(F+G)})(x_k)\|^2  \leq \frac{1}{2}\|x_0 - x^\star\|^2+ \frac{3}{2}\sum_{k=0}^{K-1} \alpha_k^2 \mathbb{E}[\varepsilon_{k, v}^2] + \sum_{k=0}^{K-1} \alpha_k \mathbb{E}[\|x_k-x^\star\|\varepsilon_{k, b}].
\end{align*}
\end{lemma}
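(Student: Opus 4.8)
The plan is to mirror the deterministic argument in the proof of \Cref{lem: rate_outer} (the one that produced \eqref{eq:hr1}), but with two modifications dictated by the stochastic setting: the step size is now the vanishing $\alpha_k$ rather than the constant $\alpha$, and the error in the resolvent approximation splits into a \emph{bias} part $\varepsilon_{k,b}$ and a \emph{variance} part $\varepsilon_{k,v}$, which must be handled by different inequalities (Cauchy--Schwarz on the \emph{conditional mean} for the bias, Young's inequality on the conditional second moment for the variance). First I would set up the running notation $R=\id-J_{\eta(F+G)}$, $\widetilde R = \id-\widetilde J_{\eta(F+G)}$, $\alpha = 1-\tfrac{\rho}{\eta}$, recall from \Cref{fact: weakmvi}(ii) that $R$ is $\alpha$-star-cocoercive, and write the update as $x_{k+1} = x_k - \alpha_k \widetilde R(x_k)$. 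Star-cocoercivity gives $\langle R(x_k), x_k-x^\star\rangle \ge \alpha\|R(x_k)\|^2$, and I would decompose $\langle R(x_k),x_k-x^\star\rangle = \langle \widetilde R(x_k), x_k-x^\star\rangle + \langle R(x_k)-\widetilde R(x_k), x_k-x^\star\rangle$ exactly as in \eqref{eq: vnb4}.

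The key step is estimating the two pieces after taking conditional expectation $\mathbb{E}_k$. For the first piece, using $\widetilde R(x_k) = \alpha_k^{-1}(x_k-x_{k+1})$ and the polarization identity $\langle a,b\rangle = \tfrac12(\|a\|^2+\|b\|^2-\|a-b\|^2)$ as in \eqref{eq: vbn54}, I get a telescoping term $\tfrac{1}{2\alpha_k}(\|x_k-x^\star\|^2-\|x_{k+1}-x^\star\|^2)$ plus $\tfrac{\alpha_k}{2}\|\widetilde R(x_k)\|^2$; the latter I bound by $\tfrac{3\alpha_k}{4}\|R(x_k)\|^2 + \tfrac{3\alpha_k}{2}\|\widetilde R(x_k)-R(x_k)\|^2$ via Young, and after $\mathbb{E}_k$ the last term becomes $\tfrac{3\alpha_k}{2}\varepsilon_{k,v}^2$. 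For the second piece, the crucial point is that I must \emph{not} pass to $\mathbb{E}_k\|R(x_k)-\widetilde R(x_k)\| \le \varepsilon_{k,v}$ (that would reintroduce the loose bound the paper is trying to avoid, cf. the discussion in \Cref{sec: main_stoc_wmi}); instead I write $\mathbb{E}_k\langle R(x_k)-\widetilde R(x_k), x_k-x^\star\rangle = \langle \mathbb{E}_k[R(x_k)-\widetilde R(x_k)], x_k-x^\star\rangle \le \|x_k-x^\star\|\,\varepsilon_{k,b}$, since $x_k-x^\star$ is $\mathcal{F}_k$-measurable. Combining these into the cocoercivity inequality yields, after rearranging, $\tfrac{\alpha}{4}\|R(x_k)\|^2 \le \tfrac{1}{2\alpha_k}(\|x_k-x^\star\|^2-\|x_{k+1}-x^\star\|^2) + \tfrac{3}{2}\varepsilon_{k,v}^2 + \|x_k-x^\star\|\varepsilon_{k,b}$ in conditional expectation; multiplying through by $\alpha_k$ I obtain a telescoping-friendly bound $\tfrac{\alpha\alpha_k}{4}\mathbb{E}_k\|R(x_k)\|^2 \le \tfrac12(\|x_k-x^\star\|^2-\|x_{k+1}-x^\star\|^2) + \tfrac{3}{2}\alpha_k^2\varepsilon_{k,v}^2 + \alpha_k\|x_k-x^\star\|\varepsilon_{k,b}$.

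Finally I would take total expectation, use the tower rule, and sum over $k=0,\dots,K-1$; the telescoping sum collapses to $\tfrac12\|x_0-x^\star\|^2 - \tfrac12\mathbb{E}\|x_K-x^\star\|^2 \le \tfrac12\|x_0-x^\star\|^2$, leaving exactly the claimed inequality
\[
\frac{\alpha}{4}\sum_{k=0}^{K-1}\alpha_k\mathbb{E}\|R(x_k)\|^2 \le \frac12\|x_0-x^\star\|^2 + \frac32\sum_{k=0}^{K-1}\alpha_k^2\mathbb{E}[\varepsilon_{k,v}^2] + \sum_{k=0}^{K-1}\alpha_k\mathbb{E}[\|x_k-x^\star\|\varepsilon_{k,b}].
\]
The main obstacle is getting the constant bookkeeping in the Young's-inequality split right so that the $\|\widetilde R(x_k)-R(x_k)\|^2$-type terms land with coefficient matching $\tfrac32\alpha_k^2$ and the residual $\|R(x_k)\|^2$ coefficient is exactly $\tfrac{\alpha}{4}$ (the same $3/4$-$3/2$ split used in the deterministic \eqref{eq: vbn54} works, so this is routine but must be done carefully); the only genuinely new idea relative to \Cref{lem: rate_outer} is the bias/variance separation, i.e. peeling off $\varepsilon_{k,b}$ via conditional expectation of the inner product instead of Cauchy--Schwarz on the norm, which is what ultimately lets the MLMC estimator (with its small bias and $\widetilde O(1)$ variance) drive the complexity down.
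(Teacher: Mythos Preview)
Your proposal is correct and follows essentially the same approach as the paper's proof: the same star-cocoercivity decomposition, the same Young's-inequality split producing the $\tfrac{3\alpha_k}{4}$ and $\tfrac{3\alpha_k}{2}$ coefficients (combined with $\alpha_k\le\alpha$ to land on $\tfrac{\alpha}{4}$), and the same bias/variance separation via conditional expectation of the inner product. The only slip is a typo in your intermediate display where $\tfrac{3}{2}\varepsilon_{k,v}^2$ should read $\tfrac{3\alpha_k}{2}\varepsilon_{k,v}^2$, but your subsequent multiplication by $\alpha_k$ and the final bound are correct.
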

\begin{proof}
We proceed mostly as the proof of \Cref{lem: rate_outer} apart from minor changes due to the stochastic setting such as iteration-dependent step sizes. 

From \Cref{fact: weakmvi}(ii), we have that $\id - J_{\eta(F+G)}$ is $\left(1-\frac{\rho}{\eta}\right)$-star cocoercive. Recall our running notations:
\begin{equation*}
\alpha=1-\frac{\rho}{\eta}, ~~~R = \id - J_{\eta(F+G)},~~~\widetilde{R}= \id - \widetilde{J}_{\eta(F+G)}.
\end{equation*} 
As a result, we have the following equivalent representation of $x_{k+1}$ (see the definition in Algorithm \ref{alg:weakmvi_stoc}):
\begin{align}
    x_{k+1}&= x_k - \alpha_k \widetilde{R}(x_k).\label{eq: cls4_stoc}
\end{align}
    By $\alpha$-star-cocoercivity of $R = \id - J_{\eta(F+G)}$, we have
    \begin{align}\label{eq: rml4_stoc}
        \langle R(x_k), x_{k}-x^\star \rangle \geq \alpha \|R(x_k)\|^2.
    \end{align}
    By a simple decomposition, we write
    \begin{align}
        \langle R(x_k), x_k - x^\star \rangle &= \langle \widetilde{R}(x_k), x_k - x^\star \rangle + \langle R(x_k) - \widetilde{R}(x_k), x_k - x^\star \rangle. \label{eq: vnb4_stoc}
    \end{align}
    For the expectation of the first term on the right-hand side of \eqref{eq: vnb4_stoc}, we derive
    that (\emph{cf.} \eqref{eq: vbn54})
    \begin{align}
        \mathbb{E}\langle \widetilde{R}(x_k), x_k - x^\star \rangle &=
        \frac{1}{\alpha_k}\mathbb{E}\langle x_k -x_{k+1}, x_k - x^\star \rangle \notag \\
        &= \frac{1}{2\alpha_k} \mathbb{E}\left( \|x_k - x_{k+1}\|^2 + \| x_k- x^\star\|^2 - \| x_{k+1} - x^\star\|^2 \right) \notag\\
        &\leq \frac{1}{2\alpha_k}\mathbb{E}\left( \|x_k - x^\star\|^2 -\|x_{k+1}- x^\star\|^2 \right) + \frac{3\alpha_k}{4}\mathbb{E}\|R(x_k)\|^2 + \frac{3\alpha_k}{2} \mathbb{E}\| \widetilde{R}(x_k) - R(x_k)\|^2 \notag\\
        &\leq \frac{1}{2\alpha_k}\mathbb{E}\left( \|x_k - x^\star\|^2 -\|x_{k+1}- x^\star\|^2 \right) + \frac{3\alpha}{4}\mathbb{E}\|R(x_k)\|^2 + \frac{3\alpha_k\mathbb{E}[\varepsilon_{k, v}^2]}{2},\label{eq: vbn54_stoc}
    \end{align}
    where we used the definition of $x_{k+1}$ from  \eqref{eq: cls4_stoc} in the first step, standard expansion $\|a-b\|^2 = \|a\|^2-2\langle a,b\rangle + \|b\|^2$ for the second step, the definition of $x_{k+1}$ from  \eqref{eq: cls4_stoc} and Young's inequality in the third step, the definition of $\varepsilon_{k, v}$ with tower rule and $\alpha_k \leq \alpha$ in the last step.
    
    For the second term on the right-hand side of \eqref{eq: vnb4_stoc}, we have, by Cauchy-Schwarz inequality and the definition of $\widetilde{R}$ and $\varepsilon_{k, b}$, that
    \begin{align}
        \mathbb{E}\langle R(x_k) - \widetilde{R}(x_k), x_k - x^\star \rangle &= \mathbb{E}[\mathbb{E}_k\langle R(x_k) - \widetilde{R}(x_k), x_k - x^\star \rangle] \notag \\
        &= \mathbb{E}\langle \mathbb{E}_k[R(x_k) - \widetilde{R}(x_k)], x_k - x^\star \rangle \notag \\
        &\leq \mathbb{E}\left[\|R(x_k) - \mathbb{E}_k[\widetilde{R}(x_k)]\| \| x_k - x^\star\|\right] \notag \\
        &\leq \mathbb{E}\left[\|x_k - x^\star\| \varepsilon_{k, b}\right],\label{eq: sdso4_stoc}
    \end{align}
    where the first step is by tower rule and the second step is by $x_k - x^\star$ being measurable under the conditioning of $\mathbb{E}_k$.
    
    We combine \eqref{eq: vbn54_stoc} and \eqref{eq: sdso4_stoc} in \eqref{eq: vnb4_stoc}, plug in the result to \eqref{eq: rml4_stoc} and rearrange to obtain
    \begin{align*}
        \frac{\alpha}{4}\mathbb{E}\|R(x_k)\|^2 \leq \frac{1}{2\alpha_k}\mathbb{E}\left(\|x_k - x^\star\|^2 - \| x_{k+1} - x^\star\|^2 \right) + \frac{3\alpha_k\mathbb{E}[\varepsilon_{k, v}^2]}{2}+\mathbb{E}[\|x_k-x^\star\|\varepsilon_{k, b}].
    \end{align*}
    We conclude after multiplying both sides by $\alpha_k$ and summing for $k=0, 1, \dots, K-1$.
\end{proof}
The next lemma considers the bias and variance of the MLMC estimator and follows the same arguments as \citep[Proposition 1]{asi2021stochastic}. The only change is that we use the algorithm FBF (see \Cref{alg:fbf_stoc} for a stochastic version) as the subsolver and we consider a strongly monotone inclusion problem rather than minimization. These do not alter the estimations significantly as can be seen in the proof.
\begin{lemma}\label{lem: mlmc_fbf}
    Under the same setting as \Cref{th: stoc_fbf} and $N\geq 2$, for the output of \Cref{alg:fbf_mlmc}, we have that
    \begin{align*}
        \| \mathbb{E}[y^{\out}] - z^\star \|^2 &\leq \frac{12L_B/\mu\|z_0 - z^\star\|^2 + 96\sigma^2/\mu^2}{N} \\
        \mathbb{E}\| y^{\out} - z^\star \|^2 &\leq 14(6L_B/\mu\|z_0-z^\star\|^2 + 48\sigma^2/\mu^2) \log_2 N
    \end{align*}
    where the expected number of calls to $\widetilde{F}$ is $O(\log_2 N)$.
\end{lemma}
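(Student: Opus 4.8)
The statement is a standard MLMC bias–variance bound in the style of \citep[Proposition 1]{asi2021stochastic}, specialized to our subsolver $\texttt{FBF}$ (\Cref{alg:fbf_stoc}) and a strongly monotone inclusion. The plan is to use the single-level rate from \Cref{th: stoc_fbf} as a black box and then perform the MLMC telescoping. Write $a_i := \mathbb{E}\|y^i - z^\star\|^2$, where $y^i = \texttt{FBF}(z_0, 2^i, \ldots)$ uses $2^i$ inner iterations; by \Cref{th: stoc_fbf},
\[
a_i \le \frac{6L_B/\mu\,\|z_0-z^\star\|^2 + 48\sigma^2/\mu^2}{2^i + 6L_B/\mu} =: \frac{C}{2^i + 6L_B/\mu},
\]
so $a_i \le C\,2^{-i}$. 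Since $I \sim \mathrm{Geom}(1/2)$ has $\Prob{I = i} = 2^{-i}$ for $i \ge 1$, the estimator $y^{\out}$ is, conditioned on $2^I \le N$, the telescoping correction $y^0 + \sum_{i=1}^{I}(y^i - y^{i-1})$ in expectation, so that $\mathbb{E}[y^{\out}]$ equals $\mathbb{E}[y^{\lfloor \log_2 N\rfloor}]$ up to a truncation term. The key algebraic identity, exactly as in \citep{asi2021stochastic}, is $\mathbb{E}[y^{\out}] = \mathbb{E}[y^{L_N}]$ where $L_N = \lfloor \log_2 N \rfloor$, hence $\|\mathbb{E}[y^{\out}] - z^\star\| = \|\mathbb{E}[y^{L_N}] - z^\star\| \le \sqrt{a_{L_N}} \le \sqrt{C\,2^{-L_N}} \le \sqrt{2C/N}$, which after squaring gives the stated bias bound $\|\mathbb{E}[y^{\out}]-z^\star\|^2 \le 2C/N = (12L_B/\mu\|z_0-z^\star\|^2 + 96\sigma^2/\mu^2)/N$.

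For the second moment, I would use the standard MLMC variance decomposition. Writing $\Delta^i = y^i - y^{i-1}$, one has $y^{\out} - z^\star = (y^0 - z^\star) + 2^I \Delta^I \mathbf{1}[2^I \le N]$, so
\[
\mathbb{E}\|y^{\out} - z^\star\|^2 \le 2\,\mathbb{E}\|y^0 - z^\star\|^2 + 2\sum_{i=1}^{L_N} 2^{i}\,\mathbb{E}\|\Delta^i\|^2,
\]
using $\Prob{I = i} = 2^{-i}$ to cancel one factor of $2^i$ against the $2^{2i}$ from $\|2^I\Delta^I\|^2$. Then bound $\mathbb{E}\|\Delta^i\|^2 = \mathbb{E}\|y^i - y^{i-1}\|^2 \le 2a_i + 2a_{i-1} \le 6C\,2^{-i}$ (using $a_{i-1}\le 2C\,2^{-i}$), giving each summand $\le 12C$ and the whole sum $\le 12C\,L_N$. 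Collecting terms and absorbing the $2a_0 \le 2C$ contribution yields $\mathbb{E}\|y^{\out}-z^\star\|^2 \le 14\,C\,\log_2 N$, matching the claim with $C = 6L_B/\mu\|z_0-z^\star\|^2 + 48\sigma^2/\mu^2$. Finally, the oracle count: $\texttt{FBF}$ with $2^i$ iterations uses $O(2^i)$ calls to $\widetilde{F}$, so the expected cost is $\sum_{i\ge 1} 2^{-i} \cdot O(2^i)\mathbf{1}[2^i \le N] = O(L_N) = O(\log_2 N)$ by the truncation at level $L_N$.

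**Main obstacle.** The nontrivial point is not any individual estimate but making the telescoping identity $\mathbb{E}[y^{\out}] = \mathbb{E}[y^{L_N}]$ fully rigorous given the truncation $2^I \le N$: one must verify that the truncation introduces no bias beyond the $\sqrt{a_{L_N}}$ term and that all series converge (which they do since $a_i$ decays geometrically). This is exactly the argument of \citep[Proposition 1]{asi2021stochastic}, and the only genuine adaptation is that here $z^\star$ solves a strongly monotone \emph{inclusion} rather than being a minimizer, so I must use $\mathbb{E}\|y^i - z^\star\|^2$ (available from \Cref{th: stoc_fbf}) in place of an optimality-gap bound — but since \Cref{th: stoc_fbf} already delivers a squared-distance contraction with the right $1/(2^i + \kappa)$ shape, this substitution is immediate and does not alter the constants in the MLMC bookkeeping. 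I would also double-check the factor-of-two bookkeeping (the $14$ and $96$) against the constant $C$ from \Cref{th: stoc_fbf} to ensure the stated constants are not merely order-correct but literally valid; this is routine but worth doing carefully since downstream corollaries (\Cref{cor: weakmvi_stoc_main}) plug in explicit $N_k, M_k$.
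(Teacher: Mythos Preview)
Your proposal is correct and follows essentially the same route as the paper's proof: both compute $\mathbb{E}[y^{\out}]=\mathbb{E}[y^{L_N}]$ via telescoping (with $L_N=\lfloor\log_2 N\rfloor=i_N$), bound the bias via $2^{L_N}\geq N/2$ and \Cref{th: stoc_fbf}, and bound the second moment by splitting through $y^0$ with Young's inequality and estimating $\mathbb{E}\|y^i-y^{i-1}\|^2\leq 2a_i+2a_{i-1}\leq 6C\,2^{-i}$, arriving at the same constants $2C/N$ and $14C\log_2 N$. The oracle-count argument is likewise identical up to the $O(\cdot)$ bookkeeping.
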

\begin{proof}
We argue as \citep[Property 1]{asi2021stochastic}. The only difference is that we call \Cref{th: stoc_fbf} which is our main solver for the strongly monotone problem.

    Let us denote $i_N = \max\{ i \geq 0\colon 2^i \leq N \}$. For a given event $E$, consider also the following notation for the characteristic function: $\mathbf{1}_{E} = 1$ if $E$ is true and $\mathbf{1}_E = 0$ if $E$ is false. 

    Then, we have by the definition of $y^{\out}$ in \Cref{alg:fbf_mlmc} that
    \begin{align}
        \mathbb{E}[y^{\out}] &= \mathbb{E}[y^0] + \mathbb{E}[\mathbf{1}_{\{2^I \leq N\}}\cdot 2^I(y^{I} - y^{I-1})] \notag \\
        &= \mathbb{E}[y^0] + \sum_{i=1}^{i_N} \Pr(I=i) 2^i \mathbb{E}[y^{i} - y^{i-1}] \notag\\
        &= \mathbb{E}[y^0] + \mathbb{E}[y^{i_N} - y^{0}]\notag \\
        &= \mathbb{E}[y^{i_N}].\label{eq: hpy5}
    \end{align}
    By the definition of $i_N$, we have that $2^{i_N} \geq \frac{N}{2}$ and hence, by Jensen's inequality and \Cref{th: stoc_fbf}, we have
    \begin{align*}
        \|\mathbb{E}[y^{i_N}] - z^\star \|^2 &\leq \mathbb{E}\|y^{i_N} - z^\star \|^2 \\
        &\leq \frac{12L_B\|z_0- z^\star\|^2 + 96\sigma^2/\mu}{N\mu},
    \end{align*}
    which is the claimed bound on the bias due to \eqref{eq: hpy5}. 
    
    We continue with estimating the variance of $y^{\out}$. First, Young's inequality gives that
    \begin{align}\label{eq: mhj4}
        \mathbb{E}\|y^{\out} - z^\star \|^2 \leq 2\mathbb{E} \| y^{\out} - y^0 \|^2 + 2\mathbb{E}\| y^0 - z^\star \|^2.
    \end{align}
    We estimate the first term on the right-hand side: 
    \begin{align}
        \mathbb{E} \| y^\out - y^0\|^2 &= \sum_{i=1}^{i_N} \Pr(I=i) \mathbb{E}\| 2^i(y^i - y^{i-1})\|^2 \notag \\
        &= \sum_{i=1}^{i_N} 2^i \mathbb{E}\| y^i - y^{i-1}\|^2 \notag \\
        &\leq \sum_{i=1}^{i_N} 2^{i+1}\left( \mathbb{E}\| y^i - z^\star\|^2 + \mathbb{E} \| y^{i-1} - z^\star\|^2 \right),\label{eq: mhj5}
    \end{align}
    where the last step is by Young's inequality.
    
    By the definitions of $y^i, y^{i-1}$ and \Cref{th: stoc_fbf}, we have that
    \begin{align*}
        \mathbb{E} \| y^i - z^\star \|^2 \leq \frac{6L_B\|z_0 - z^\star\|^2+48\sigma^2/\mu}{2^i \mu}, \\
        \mathbb{E} \| y^{i-1} - z^\star \|^2 \leq \frac{6L_B\|z_0 - z^\star\|^2+48\sigma^2/\mu}{2^{i-1} \mu}.
    \end{align*}
    This gives, in view of \eqref{eq: mhj5}, that
    \begin{equation*}
        \mathbb{E} \| y^\out - y^0 \|^2 \leq \frac{6(6L_B\|z_0-z^\star\|^2 + 48\sigma^2/\mu)}{\mu} i_N.
    \end{equation*}
    The second term on the right-hand side of \eqref{eq: mhj4} is estimated the same way by using \Cref{th: stoc_fbf}:
    \begin{align*}
        \mathbb{E}\|y^0-z^\star\|^2\leq \frac{6L_B\|z_0 - z^\star\|^2+48\sigma^2/\mu}{\mu}.
    \end{align*}
    Combining the last two estimates in \eqref{eq: mhj4} gives the claimed bound on the variance after using $i_N \leq \log_2 N$.
    
    The expected number of calls to $\widetilde{B}$ is calculated as
    \begin{equation*}
        2+2\sum_{i=1}^{i_N} P(I=i)(2^i + 2^{i-1}) = O(1+i_N) = O(1+\log_2 N),
    \end{equation*}
    since each iteration of stochastic FBF uses $2$ unbiased samples of $F$. This completes the proof.
\end{proof}
In fact, \Cref{alg:weakmvi_stoc} computes independent draws of $\texttt{MLMC{-}FBF}$ and averages them to get a better control on the variance as \citep[Theorem 1]{asi2021stochastic}. 
\begin{corollary}\label{cor: mlmc_bounds_avg}
    Let $\widetilde{J}_{\eta(F+G)}(x_k)$ be defined as in \Cref{alg:weakmvi_stoc} and consider the setting of \Cref{th: stoc_fbf}. Then, for any $b_k, v$, we have the bias and variance bounds given as
    \begin{align*}
        \| \mathbb{E}_k[\widetilde{J}_{\eta(F+G)}(x_k)]- J_{\eta(F+G)}(x_k) \|^2 &\leq b_k^2(\|(\id+J_{\eta(F+G)})(x_k)\|^2+\sigma^2), \\
        \mathbb{E}_k\| \widetilde{J}_{\eta(F+G)}(x_k)- J_{\eta(F+G)}(x_k) \|^2 &\leq v^2(\|(\id+J_{\eta(F+G)})(x_k)\|^2+\sigma^2),
    \end{align*}
    where
    \begin{equation*}
        N_k = \left\lceil \frac{\max\{12L_B/\mu, 96/\mu^2\}}{\min\{b_k^2, \frac{v^2}{2}\}} \right\rceil, \text{~~~and~~~} M_k =\left\lceil \frac{2\log_2 N_k \max\{84L_B/\mu, 672/\mu^2\}}{v^2} \right\rceil.
    \end{equation*}
    Each iteration makes in expectation $O(\log N_k\cdot M_k)$ calls to stochastic first-order oracle.
\end{corollary}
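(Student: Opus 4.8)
The plan is to apply Lemma~\ref{lem: mlmc_fbf} to a single draw $\widetilde{J}^{(m)}_{\eta(F+G)}(x_k)=\texttt{MLMC-FBF}(x_k,N_k,\eta G,\id+\eta\widetilde{F},1+\eta L)$ and then pass to the average over the $M_k$ independent draws. First I would set up the subproblem: conditioned on $x_k$, $\texttt{MLMC-FBF}$ runs $\texttt{FBF}$ on the inclusion $0\in\eta G(z)+B(z)$ with $B(\cdot)=(\id+\eta F)(\cdot)-x_k$, which by Fact~\ref{fact: weakmvi}(iv) is $\mu$-strongly monotone and $L_B$-Lipschitz with $\mu=1-\eta L$ and $L_B=1+\eta L$; its unique solution (Fact~\ref{fact: weakmvi}(iii)) is $z^\star=J_{\eta(F+G)}(x_k)$, so $z_0-z^\star=x_k-J_{\eta(F+G)}(x_k)$, and the stochastic oracle $\widetilde{B}(\cdot)=(\id+\eta\widetilde{F})(\cdot)-x_k$ has noise proxy $\mathbb{E}_k\|\widetilde{B}(x)-B(x)\|^2=\eta^2\mathbb{E}_k\|\widetilde{F}(x)-F(x)\|^2\le\sigma^2$ (the factor $\eta^2$ is harmless and can be absorbed into constants). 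Lemma~\ref{lem: mlmc_fbf} then yields, writing $y:=\widetilde{J}^{(m)}_{\eta(F+G)}(x_k)$,
\[
\|\mathbb{E}_k[y]-z^\star\|^2\le\frac{12L_B/\mu\,\|x_k-z^\star\|^2+96\sigma^2/\mu^2}{N_k},\qquad
\mathbb{E}_k\|y-z^\star\|^2\le 14\big(6L_B/\mu\,\|x_k-z^\star\|^2+48\sigma^2/\mu^2\big)\log_2 N_k.
\]

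Next I would pass to the average $\widetilde{J}_{\eta(F+G)}(x_k)=\frac{1}{M_k}\sum_{m=1}^{M_k}\widetilde{J}^{(m)}_{\eta(F+G)}(x_k)$. Since the $M_k$ draws are i.i.d.\ conditioned on $x_k$, $\mathbb{E}_k[\widetilde{J}_{\eta(F+G)}(x_k)]=\mathbb{E}_k[y]$, so the bias bound transfers verbatim. For the variance I would use the bias--variance decomposition together with the variance-of-the-mean identity:
\[
\mathbb{E}_k\|\widetilde{J}_{\eta(F+G)}(x_k)-z^\star\|^2=\big\|\mathbb{E}_k[y]-z^\star\big\|^2+\tfrac{1}{M_k}\mathbb{E}_k\big\|y-\mathbb{E}_k[y]\big\|^2\le\big\|\mathbb{E}_k[y]-z^\star\big\|^2+\tfrac{1}{M_k}\mathbb{E}_k\|y-z^\star\|^2,
\]
where the last step bounds a variance by the second moment about $z^\star$.

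Then I would choose $N_k,M_k$ to collapse these to the asserted forms. Bounding $12L_B/\mu\,\|x_k-z^\star\|^2+96\sigma^2/\mu^2\le\max\{12L_B/\mu,96/\mu^2\}\big(\|x_k-z^\star\|^2+\sigma^2\big)$ (and similarly with the constants $6,48$), the choice $N_k=\lceil\max\{12L_B/\mu,96/\mu^2\}/\min\{b_k^2,v^2/2\}\rceil$ makes the single-draw bias term at most $\min\{b_k^2,v^2/2\}\big(\|x_k-z^\star\|^2+\sigma^2\big)$; this gives the claimed bias bound and simultaneously controls the second summand of the variance by $\tfrac{v^2}{2}\big(\|x_k-z^\star\|^2+\sigma^2\big)$. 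Feeding the resulting $\log_2 N_k$ into the first variance summand, the choice $M_k=\lceil 2\log_2 N_k\,\max\{84L_B/\mu,672/\mu^2\}/v^2\rceil$ (here $14\cdot 6=84$, $14\cdot 48=672$, and the extra factor $2$ supplies the $\tfrac12$) makes it at most $\tfrac{v^2}{2}\big(\|x_k-z^\star\|^2+\sigma^2\big)$, and adding the two halves gives the variance bound. Finally, Lemma~\ref{lem: mlmc_fbf} states each $\texttt{MLMC-FBF}$ draw costs $O(\log_2 N_k)$ oracle calls in expectation, so the $M_k$ draws cost $O(\log_2 N_k\cdot M_k)$ in expectation.

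The routine bookkeeping of the above constants is essentially all the work; the main points to watch are: (i) applicability of Lemma~\ref{lem: mlmc_fbf}, which needs $N_k\ge 2$ --- this holds since $\max\{12L_B/\mu,96/\mu^2\}\ge 96$ while $\min\{b_k^2,v^2/2\}$ is at most a small constant (and if not, the asserted bounds are vacuous, so one may assume $b_k,v$ small without loss of generality); and (ii) keeping the noise proxy $\eta^2\sigma^2\le\sigma^2$ of $\widetilde{B}$ consistent with the $\sigma^2$ appearing in the statement.
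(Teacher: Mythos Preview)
Your proposal is correct and follows essentially the same route as the paper: apply Lemma~\ref{lem: mlmc_fbf} to a single draw, use the bias--variance decomposition of the average together with $\mathbb{E}_k\|y-\mathbb{E}_k[y]\|^2\le\mathbb{E}_k\|y-z^\star\|^2$, and then plug in the stated $N_k,M_k$. A tiny cosmetic slip: in your variance paragraph you swap the labels ``first'' and ``second'' summand (the $N_k$ choice controls the bias term $\|\mathbb{E}_k[y]-z^\star\|^2$, while $M_k$ controls the averaged second-moment term carrying $\log_2 N_k$), but the argument itself is unaffected.
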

\begin{proof}
This proof follows the arguments in \citep[Theorem 1]{asi2021stochastic}. The difference is that we set the values of $N_k, M_k$ independent of $\|R(x_k)\|^2$ and $\sigma^2$, to make $N_k, M_k$ computable, which results in these terms appearing in the bias and variance upper bounds.

    We first note that $\mathbb{E}_k[\widetilde J_{\eta(F+G)}(x_k)] = \mathbb{E}_k[\widetilde J_{\eta(F+G)}^{(1)}(x_k)]$. We next have by direct expansion that
    \begin{align*}
        \mathbb{E}_k\| \widetilde{J}_{\eta(F+G)}(x_k) - J_{\eta(F+G)}(x_k) \|^2 &= \frac{1}{M_k} \mathbb{E}_k\|\widetilde{J}_{\eta(F+G)}^{(1)}(x_k) - J_{\eta(F+G)}(x_k)\|^2 \\
        &\quad+ \left( 1- \frac{1}{M_k}\right) \| \mathbb{E}_k[\widetilde{J}_{\eta(F+G)}^{(1)}(x_k)] - J_{\eta(F+G)}(x_k)\|^2,
    \end{align*}
    since $\widetilde{J}_{\eta(F+G)}^{(i)}$ are independent draws of the same estimator.
    
    By applying the identity $\mathbb{E}\|X\|^2=\mathbb{E}\|X-\mathbb{E}X\|^2 + \|\mathbb{E}X\|^2$ with $X = \widetilde{J}_{\eta(F+G)}^{(1)}(x_k) - J_{\eta(F+G)}(x_k)$, we obtain
    \begin{align}
        \mathbb{E}_k\| \widetilde{J}_{\eta(F+G)}(x_k) - J_{\eta(F+G)}(x_k) \|^2 &= \frac{1}{M_k} \mathbb{E}_k\|\widetilde{J}_{\eta(F+G)}^{(1)}(x_k) - \mathbb{E}_k[\widetilde{J}_{\eta(F+G)}^{(1)}(x_k)]\|^2\notag \\
        &\quad + \| \mathbb{E}_k[\widetilde{J}_{\eta(F+G)}^{(1)}(x_k)] - J_{\eta(F+G)}(x_k)\|^2.\label{eq: vft3}
    \end{align}
    On the one hand, the fact $\mathbb{E}\|X-\mathbb{E}X\|^2\leq \mathbb{E}\|X\|^2$ gives
    \begin{equation}\label{eq: vft4}
        \mathbb{E}_k\|\widetilde{J}_{\eta(F+G)}^{(1)}(x_k) - \mathbb{E}_k[\widetilde{J}_{\eta(F+G)}^{(1)}(x_k)] \|^2 \leq \mathbb{E}_k\|\widetilde{J}_{\eta(F+G)}^{(1)}(x_k) - J_{\eta(F+G)}(x_k)\|^2.
    \end{equation}
    On the other hand, the bounds in \Cref{lem: mlmc_fbf} gives, after substituting $z_0 = x_k$ and $z^\star = J_{\eta(F+G)}(x_k)$ that
    \begin{subequations}
    \begin{align}
        \|\mathbb{E}_k[\widetilde{J}_{\eta(F+G)}^{(1)}(x_k)] - J_{\eta(F+G)}(x_k)\|^2&\leq \frac{12L_B/\mu\|(\id+J_{\eta(F+G)})(x_k)\|^2 + 96\sigma^2/\mu^2}{N_k}, \label{eq: llo1}\\
        \mathbb{E}_k\|\widetilde{J}_{\eta(F+G)}^{(1)}(x_k) - J_{\eta(F+G)}(x_k)\|^2 &\leq \left(84L_B/\mu\|(\id+J_{\eta(F+G)})(x_k)\|^2 + 672\sigma^2/\mu^2\right)\log_2 N_k.\label{eq: llo2}
    \end{align}
    \end{subequations}
    Using $\mathbb{E}_k[\widetilde J_{\eta(F+G)}(x_k)] = \mathbb{E}_k[\widetilde J_{\eta(F+G)}^{(1)}(x_k)]$ gives the bias bound after using the definition of $N_k$ and \eqref{eq: llo1}
    
    Plugging in \eqref{eq: llo2} and \eqref{eq: vft4} in \eqref{eq: vft3} gives the variance bound after substituting the values of $N_k$ and $M_k$.
\end{proof}

\subsubsection{Proof for \Cref{cor: weakmvi_stoc_main}}\label{subsubsec: mlmc_main_app}
\Cref{cor: weakmvi_stoc_main} is essentially the summary of the results proven below. 

Let us remark the recent work \citep[Corollary 5.4]{bravo2024stochastic} that studied stochastic KM iteration for nonexpansive operators on normed spaces. This work assumes access to an unbiased oracle of the nonexpansive operator at hand and get the complexity $\widetilde{O}(\varepsilon^{-4})$. As mentioned in \Cref{sec: main_stoc_wmi}, this corresponds to requiring unbiased samples of $J_{\eta(F+G)}$ in our setting, which is difficult due to the definition of the resolvent. We get the same complexity up to logarithmic factors without access to unbiased samples of $J_{\eta(F+G)}$, which we go around by using the MLMC technique. We also do not require nonexpansiveness from $J_{\eta(F+G)}$ and work with conic quasi-nonexpansiveness. 
\begin{theorem}\label{th: weakmvi_stoc}
    Let Assumptions \ref{asp:1}, \ref{asp:3}, and \ref{asp:4} hold. Consider \Cref{alg:weakmvi_stoc} with $\eta < \frac{1}{L}$ and $\rho < \eta$. Then, we have for $K\geq 1$ that 
    \begin{equation*}
        \mathbb{E}_{x^{\out}\sim\mathrm{Unif}\{x_0, \dots, x_{K-1}\}}[\mathbb{E} \|(\id-J_{\eta(F+G)})(x^\out) \|^2] \leq \frac{64(\|x_0-x^\star\|^2+ \alpha^2\sigma^2)\log(K+3)}{\alpha^2 \sqrt{K}},
    \end{equation*}
    where $\alpha= 1-\frac{\rho}{\eta}$. Each iteration makes, in expectation, $O(\log^2(k+2))$ calls to stochastic oracle $\widetilde{B}$ and resolvent of $A$. Hence to obtain $\mathbb{E}\|(\id-J_{\eta(F+G)})(x^\out)\|\leq\varepsilon$, we have the expected stochastic first-order complexity $\widetilde{O}(\varepsilon^{-4})$.
\end{theorem}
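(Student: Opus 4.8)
The plan is to derive Theorem~\ref{th: weakmvi_stoc} from the outer-loop descent inequality of \Cref{lem: stoc_mvi_iter_compl} together with the bias/variance control of the averaged MLMC estimator in \Cref{cor: mlmc_bounds_avg}, after first establishing a uniform-in-$k$ bound on $\mathbb{E}\|x_k-x^\star\|^2$. First I would instantiate \Cref{cor: mlmc_bounds_avg} with $\mu = 1-\eta L$ and $L_B = 1+\eta L$ (the strong-monotonicity and Lipschitz constants of $\id+\eta F$ from \Cref{fact: weakmvi}(iv)), the resolvent subproblem being solvable by \Cref{fact: weakmvi}(iii). Taking the variance level $v^2 = \tfrac{1}{60}$ and the bias level $b_k^2 = \tfrac{\alpha_k}{120\alpha(k+1)}$, and observing that $\max\{12L_B/\mu,\,96/\mu^2\} = 96(1-\eta L)^{-2}$ and $\max\{84L_B/\mu,\,672/\mu^2\} = 672(1-\eta L)^{-2}$ since $L_B\le 2$ and $\mu\le 1$, one checks that the values $N_k,M_k$ hard-coded in \Cref{alg:weakmvi_stoc} are exactly those demanded by \Cref{cor: mlmc_bounds_avg}. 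This yields, writing $R = \id - J_{\eta(F+G)}$, the conditional bounds
\[
\mathbb{E}_k\|\widetilde J_{\eta(F+G)}(x_k) - J_{\eta(F+G)}(x_k)\|^2 \le \tfrac{1}{60}\big(\|R(x_k)\|^2+\sigma^2\big), \qquad \|\mathbb{E}_k[\widetilde J_{\eta(F+G)}(x_k)] - J_{\eta(F+G)}(x_k)\| \le b_k\sqrt{\|R(x_k)\|^2+\sigma^2},
\]
which identify $\varepsilon_{k,v}^2$ and $\varepsilon_{k,b}$ in the hypotheses of \Cref{lem: stoc_mvi_iter_compl}.

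Second, I would prove boundedness of the iterates. Expanding $\|x_{k+1}-x^\star\|^2$ from $x_{k+1}=x_k-\alpha_k(\id-\widetilde J_{\eta(F+G)})(x_k)$, taking $\mathbb{E}_k$, using $\alpha$-star-cocoercivity of $R$ (\Cref{fact: weakmvi}(ii)) and $\alpha_k\le\alpha$ to cancel the $\|R(x_k)\|^2$ contributions, together with the star-Lipschitz bound $\|R(x_k)\|\le\alpha^{-1}\|x_k-x^\star\|$ used in \eqref{eq: bui5}, gives a recursion $\mathbb{E}_k\|x_{k+1}-x^\star\|^2\le(1+\delta_k)\|x_k-x^\star\|^2+\zeta_k$ with $\delta_k = O\!\big(\alpha_k b_k/\alpha + \alpha_k^2 v^2/\alpha^2\big)$ and $\zeta_k = O\!\big((\alpha_k b_k+\alpha_k^2 v^2)\sigma^2\big)$. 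Because $\alpha_k b_k$ decays like $k^{-5/4}$ (up to a $\log$) and $\sum_k\alpha_k^2\le C\alpha^2$, both $\sum_k\delta_k$ and $\sum_k\zeta_k/\sigma^2$ are bounded by absolute constants; then $\prod_j(1+\delta_j)\le e^{\sum_j\delta_j}$ is bounded and telescoping the recursion yields $\mathbb{E}\|x_k-x^\star\|^2\le C(\|x_0-x^\star\|^2+\alpha^2\sigma^2)$ for all $k$ with an absolute $C$.

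Third, I would substitute these facts into \Cref{lem: stoc_mvi_iter_compl}. The variance term $\tfrac32\sum_k\alpha_k^2\mathbb{E}[\varepsilon_{k,v}^2]\le\tfrac1{40}\sum_k\alpha_k^2(\mathbb{E}\|R(x_k)\|^2+\sigma^2)$ splits, via $\alpha_k^2\le\alpha_0\alpha_k$ with $\alpha_0<\alpha$, into a piece that is absorbed into the left-hand side $\tfrac{\alpha}{4}\sum_k\alpha_k\mathbb{E}\|R(x_k)\|^2$ plus an $O(\alpha^2\sigma^2)$ remainder; the bias term $\sum_k\alpha_k\mathbb{E}[\|x_k-x^\star\|\varepsilon_{k,b}]\le\sum_k\alpha_k b_k\,\mathbb{E}[\|x_k-x^\star\|(\alpha^{-1}\|x_k-x^\star\|+\sigma)]$ is $O(\|x_0-x^\star\|^2+\alpha^2\sigma^2)$ by step two and summability of $\sum_k\alpha_k b_k/\alpha$. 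This gives $\tfrac{\alpha}{4}\sum_{k=0}^{K-1}\alpha_k\mathbb{E}\|R(x_k)\|^2\le C'(\|x_0-x^\star\|^2+\alpha^2\sigma^2)$. Using $\alpha_k\ge\alpha/(\sqrt{K+1}\log(K+2))$ for $k\le K-1$, dividing, and bounding the resulting $\sqrt{K+1}/K$ and $\log$ factors by a single numerical constant $64$, one obtains the stated bound on $\tfrac1K\sum_{k=0}^{K-1}\mathbb{E}\|R(x_k)\|^2 = \mathbb{E}_{x^{\out}\sim\mathrm{Unif}}[\mathbb{E}\|R(x^{\out})\|^2]$. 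For the complexity, Jensen gives $\mathbb{E}\|R(x^{\out})\|\le(\mathbb{E}\|R(x^{\out})\|^2)^{1/2}$, so $\eta^{-1}\mathbb{E}\|R(x^{\out})\|\le\varepsilon$ requires $K=\widetilde O(\varepsilon^{-4})$ outer steps; since $N_k=\mathrm{poly}(k)(1-\eta L)^{-2}$ and $M_k=O(\log N_k)$, each outer step costs $O(\log^2 N_k)=\widetilde O(1)$ expected oracle calls by \Cref{cor: mlmc_bounds_avg} (which in turn rests on \Cref{th: stoc_fbf} and \Cref{lem: mlmc_fbf}), for a total of $\widetilde O(\varepsilon^{-4})$.

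The hard part will be step two — the uniform boundedness of the iterates — because the MLMC error levels $\varepsilon_{k,v},\varepsilon_{k,b}$ are \emph{relative} to $\|R(x_k)\|$ and therefore cannot be treated as exogenous, so one must close a feedback loop; the resolution is the almost-supermartingale recursion above with summable multiplicative ($\delta_k$) and additive ($\zeta_k$) perturbations, which is precisely what the choices of $\alpha_k$, $b_k$, $v$ are designed to guarantee. A secondary, purely technical nuisance is matching the explicit constants $N_k,M_k$ in \Cref{alg:weakmvi_stoc} to the requirements of \Cref{cor: mlmc_bounds_avg} and propagating the exact numerical constants through to the final bound.
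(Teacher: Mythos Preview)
Your proposal is correct and takes a genuinely different route from the paper's proof.

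The key divergence is in how you handle the lack of an a priori bound on $\mathbb{E}\|x_k-x^\star\|^2$. The paper does \emph{not} establish uniform boundedness of the iterates as a standalone lemma. Instead, it bounds $\mathbb{E}\|x_k-x^\star\|^2$ only in terms of the accumulated past variance errors $\sum_{i<k}\alpha_i^2\mathbb{E}[\varepsilon_{i,v}^2]$ (via the one-step contraction $\|x_{k+1}-x^\star\|\le\|x_k-x^\star\|+\alpha_k\|(\widetilde J-J)(x_k)\|$ squared and summed), substitutes this into the bias term of \Cref{lem: stoc_mvi_iter_compl} after a Young split, swaps the resulting double sum, and then closes the whole system by an induction on $K$ showing $\alpha\sum_{k=0}^{K-1}\alpha_k\mathbb{E}\|R(x_k)\|^2\le 32(\|x_0-x^\star\|^2+\alpha^2\sigma^2)$, carefully peeling off the $k=K-1$ summand so that the inductive hypothesis applies only to indices $\le K-2$.

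Your route is cleaner and more modular: you first prove $\mathbb{E}\|x_k-x^\star\|^2\le C(\|x_0-x^\star\|^2+\alpha^2\sigma^2)$ directly via a Robbins--Siegmund almost-supermartingale recursion, exploiting that the choices of $\alpha_k,b_k,v$ make both the multiplicative perturbation $\delta_k\asymp\alpha_k b_k/\alpha\asymp k^{-5/4}$ and the additive perturbation $\zeta_k\asymp\alpha^2\sigma^2\cdot k^{-1}/\log^2 k$ summable. Once that is in hand, the descent inequality of \Cref{lem: stoc_mvi_iter_compl} yields the rate by a single absorption (using $\alpha_k^2\le\alpha_0\alpha_k$) with no induction. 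The paper's induction avoids having to verify the supermartingale structure explicitly, but at the cost of the double-sum manipulation; your argument exposes that uniform boundedness in fact holds under these parameter choices, which is a slightly stronger intermediate conclusion. The only caveat is that your constants are not tracked explicitly enough to certify the specific value $64$, but the order is correct.
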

The main reason for the length of the following proof is the lack of boundedness of $(x_k)$. In particular, proving this theorem is rather straightforward when we assume a bounded domain. We have to handle the complications without this assumption.
There are also additional difficulties that arise because we are making sure that the inputs to $\texttt{MLMC{-}FBF}$ will not involve unknown quantities such as $\|x_0 -x^\star\|$ or $\sigma$ to run the algorithm. These are, for example, used in \cite{chen2022near} for setting the parameters. Because of this reason, the bounds for $\varepsilon_{k, v}$ and $\varepsilon_{k, b}$ involve $\|(\id+J_{\eta(F+G)})(x_k)\|$ and $\sigma^2$. 

The main reason for the difficulty here is $\|(\id+J_{\eta(F+G)})(x_k)\|^2$, since we do not have a uniform bound on this quantity, unlike $\sigma^2$ and this term appears in many summands. We will carry these terms coming from the MLMC bounds to get a recursion involving the sum of $\|(\id+J_{\eta(F+G)})(x_k)\|^2$ for different ranges on both sides. We then go around the issue of lacking of a bound on $(x_k)$ by using an inductive argument on $\sum_{k=0}^K \|(\id+J_{\eta(F+G)})(x_k)\|^2$.
\begin{proof}[Proof of \Cref{th: weakmvi_stoc}]
    Recall our running notations:
\begin{equation*}
\alpha=1-\frac{\rho}{\eta}, ~~~R = \id - J_{\eta(F+G)},~~~\widetilde{R}= \id - \widetilde{J}_{\eta(F+G)}.
\end{equation*} 
    We start by following the proof of \Cref{lem: wlk4}. By $\alpha$-star-cocoercivity of $\id-J_{\eta(F+G)}$ and $\alpha \geq \alpha_k$ (which gives that $J_{\eta(F+G)}$ is $\frac{1}{\alpha_k}$-star-conic nonexpansive), we can use property \eqref{eq:star_nonexpansive} derived in Lemma \ref{lem:conic-star} to obtain
    \begin{equation*}
        \left\| (1-\alpha_k) x_k + \alpha_k J_{\eta(F+G)}(x_k) - x^\star \right\|\leq \| x_k - x^\star\|
    \end{equation*}
    and by the definition of $x_{k+1}$, we get
    \begin{align}
        \|x_{k+1} - x^\star\| &\leq  \|(1-\alpha_k) x_k + \alpha_k J_{\eta(F+G)}(x_k) - x^\star \| + \alpha_k \|\widetilde{J}_{\eta(F+G)}(x_k) - J_{\eta(F+G)}(x_k) \| \notag \\
        &\leq \|x_k - x^\star \| + \alpha_k \|\widetilde{J}_{\eta(F+G)}(x_k) - J_{\eta(F+G)}(x_k) \|.\label{eq: pkl4}
    \end{align}
    Summing the inequality for $0, \dots, k-1$ gives
    \begin{align}
        &\|x_k - x^\star\| \leq \| x_0 - x^\star \| + \sum_{i=0}^{k-1} \alpha_i \|\widetilde{J}_{\eta(F+G)}(x_i) - J_{\eta(F+G)}(x_i)\| \notag \\
        \Longrightarrow~&\mathbb{E}\|x_k - x^\star\|^2 \leq 2\mathbb{E}\| x_0 - x^\star \|^2 + 2k\sum_{i=0}^{k-1} \alpha_i^2 \mathbb{E}\|\widetilde{J}_{\eta(F+G)}(x_i) - J_{\eta(F+G)}(x_i)\|^2,\label{eq: lab1}
    \end{align}
    where we first squared both sides, used Young's inequality and then took expectation.
    
    We continue by restating the result of \Cref{lem: stoc_mvi_iter_compl} after applying Young's inequality on the last term to obtain
    \begin{align}
\frac{\alpha}{4}\sum_{k=0}^{K-1}\alpha_k \mathbb{E}\|(\id - J_{\eta(F+G)})(x_k)\|^2  &\leq \frac{1}{2}\|x_0 - x^\star\|^2+ \frac{3}{2}\sum_{k=0}^{K-1} \alpha_k^2 \mathbb{E}[\varepsilon_{k, v}^2] \notag \\
&\quad+ \sum_{k=0}^{K-1} \left(\frac{\alpha_k^2}{2\alpha^2(k+1)} \mathbb{E}\|x_k-x^\star\|^2 + \frac{(k+1)\alpha^2}{2}\mathbb{E}[\varepsilon_{k, b}^2]\right).\label{eq: nhy2}
\end{align}
We now estimate the second and third terms on the right-hand side.
By using \Cref{cor: mlmc_bounds_avg} and the definition of $R(x_k)$, $\alpha_k = \frac{\alpha}{\sqrt{k+2}\log(k+3)}\leq \frac{\alpha}{\sqrt{2}\log 3}$ and using $v^2 =\frac{1}{60}\leq \frac{\sqrt{2}\log 3}{24}$, we obtain
\begin{align}
    \frac{3}{2}\sum_{k=0}^{K-1} \alpha_k^2 \mathbb{E}[\varepsilon_{k, v}^2] &\leq \frac{3}{2}\sum_{k=0}^{K-1} \alpha_k^2 v^2\left( \mathbb{E}\|R(x_k)\|^2+ \sigma^2\right) \notag \\
    &\leq \frac{\alpha \alpha_{K-1}}{16} ( \mathbb{E}\|R(x_{K-1})\|^2+\sigma^2) + \frac{3}{2}\sum_{k=0}^{K-2} \alpha_k^2 v^2 \left( \mathbb{E} \|R(x_k)\|^2+ \sigma^2\right).\label{eq: nhy2.5}
\end{align}
We continue with the first part of the third term on the right-hand side of \eqref{eq: nhy2} and bound it using \eqref{eq: lab1}:
\begin{align}
    \sum_{k=0}^{K-1} \frac{\alpha_k^2}{2\alpha^2(k+1)}\mathbb{E}\|x_k-x^\star\|^2 &\leq \frac{1}{2}\|x_0-x^\star\|^2 + \sum_{k=1}^{K-1} \frac{\alpha_k^2}{2\alpha^2(k+1)}\mathbb{E}\|x_k-x^\star\|^2 \notag \\
    &\leq \frac{1}{2}\|x_0-x^\star\|^2 + \sum_{k=1}^{K-1} \frac{\alpha_k^2}{2\alpha^2(k+1)}\left( 2\|x_0-x^\star\|^2 + 2k\sum_{i=0}^{k-1}\alpha_i^2\mathbb{E}[\varepsilon_{i, v}^2] \right),\label{eq: nhy3}
\end{align}
where the last line identified $\varepsilon_{i, v}^2$ in view of \Cref{lem: stoc_mvi_iter_compl}.

We focus on the last term here to get
\begin{align*}
    \sum_{k=1}^{K-1} \frac{\alpha_k^2}{2\alpha^2(k+1)}\cdot 2k\sum_{i=0}^{k-1}\alpha_i^2 \mathbb{E}[\varepsilon_{i, v}^2] &= \frac{1}{\alpha^2}\sum_{i=0}^{K-2} \sum_{k=i+1}^{K-1} \frac{k}{k+1}\alpha_k^2 \alpha_i^2   \mathbb{E}[\varepsilon_{i, v}^2] \\
    &\leq \frac{1}{\alpha^2} \left( \sum_{k=0}^{K-1}\alpha_k^2 \right) \sum_{i=0}^{K-2}\alpha_i^2 \mathbb{E}[\varepsilon_{i, v}^2]\\
    &\leq  \frac{1}{\alpha^2} \left( \sum_{k=0}^{K-1}\alpha_k^2 \right) \sum_{i=0}^{K-2}\alpha_i^2v^2\left( \mathbb{E}\|R(x_i)\|^2 + \sigma^2 \right),
\end{align*}
where the last step used \Cref{cor: mlmc_bounds_avg}.

Plugging in back to \eqref{eq: nhy3} gives
\begin{align}
    \sum_{k=0}^{K-1} \frac{\alpha_k^2}{2\alpha^2(k+1)}\mathbb{E}\|x_k-x^\star\|^2 &\leq \left(\frac{1}{2} + \sum_{k=1}^{K-1} \frac{\alpha_k^2}{\alpha^2(k+1)}  \right) \|x_0-x^\star\|^2 \notag \\
    &\quad+ \left( \sum_{k=0}^{K-1}\frac{\alpha_k^2}{\alpha^2} \right) \sum_{i=0}^{K-2}\alpha_i^2v^2\left(  \mathbb{E}\|R(x_i)\|^2 + \sigma^2 \right).\label{eq: nhy4}
\end{align}
By using $\alpha_k = \frac{\alpha}{\sqrt{k+2}\log(k+3)}$,
we have
\begin{equation*}
    \sum_{k=0}^{K-1} \frac{\alpha_k^2}{\alpha^2} < 3, ~~~ \sum_{k=0}^{K-1} \frac{\alpha_k^2}{\alpha^2(k+1)} < 0.25, ~~~\alpha_k \leq \alpha ~\forall k \geq 0,
\end{equation*}
which helps estimate \eqref{eq: nhy4} as
\begin{align}
    \sum_{k=0}^{K-1} \frac{\alpha_k^2}{2\alpha^2(k+1)}\mathbb{E}\|x_k-x^\star\|^2 &\leq \frac{3}{4} \|x_0-x^\star\|^2 +3 \sum_{i=0}^{K-2}\alpha_i^2v^2\left(  \mathbb{E}\|R(x_i)\|^2 + \sigma^2 \right).\label{eq: jke4}
\end{align}
We finally estimate the second part of the third term on the right-hand side of \eqref{eq: nhy2} by using \Cref{cor: mlmc_bounds_avg}:
\begin{align*}
    \alpha^2\sum_{k=0}^{K-1} \frac{k+1}{2} \mathbb{E}[\varepsilon_{k, b}^2] &\leq \alpha^2\sum_{k=0}^{K-1} (k+1)b_k^2\mathbb{E}[\|R(x_k)\|^2 + \sigma^2].
\end{align*}
We use the setting $b_k^2=\frac{\alpha_k}{120\alpha(k+1)}$ and $b_{K-1}^2 < \frac{\alpha_{K-1}}{16\alpha K}$ to obtain
\begin{align}
    \alpha^2\sum_{k=0}^{K-1} \frac{k+1}{2} \mathbb{E}[\varepsilon_{k, b}^2] &\leq \frac{\alpha \alpha_{K-1}}{16} ( \mathbb{E} \|R(x_{K-1})\|^2 + \sigma^2)+ \alpha^2\sum_{k=0}^{K-2} (k+1)b_k^2\mathbb{E}[\|R(x_k)\|^2 + \sigma^2].\label{eq: nhy5}
\end{align}
We collect \eqref{eq: nhy2.5}, \eqref{eq: jke4}, and \eqref{eq: nhy5} in \eqref{eq: nhy2} to get
\begin{align}
    \frac{\alpha}{8} \sum_{k=0}^{K-1} \alpha_k \mathbb{E}\|R(x_k)\|^2 &\leq \frac 54 \|x_0-x^\star\|^2 + \frac{\alpha\alpha_{K-1}}{8} \sigma^2  +\frac 92\sum_{k=0}^{K-2} \alpha_k^2 v^2 \left( \mathbb{E}\|R(x_k)\|^2+ \sigma^2\right) \notag\\
    &\quad+\alpha^2\sum_{k=0}^{K-2} (k+1)b_k^2\mathbb{E}[\|R(x_k)\|^2 + \sigma^2].\label{eq: nhy6}
\end{align}
We now show by induction that
\begin{equation}
    \alpha\sum_{k=0}^{K-1} \alpha_k\mathbb{E}\|R(x_k)\|^2 \leq C\left( \|x_0-x^\star\|^2+\alpha^2\sigma^2 \right) ~~~\forall K \geq 1,\label{eq: ind_result_mlmc}
\end{equation}
for some $C$ to be determined.
With $\alpha < 1$, \eqref{eq: nhy6} becomes
\begin{align}
    \frac{\alpha}{8} \sum_{k=0}^{K-1} \alpha_k \mathbb{E}\|R(x_k)\|^2 &\leq 1.25\|x_0-x^\star\|^2 + \alpha^2 \sigma^2+4.5\sum_{k=0}^{K-2} v^2 (\alpha\alpha_k \mathbb{E} \|R(x_k)\|^2 + \alpha^2\sigma^2) \notag\\
    &\quad+\alpha\sum_{k=0}^{K-2} \frac{(k+1)b_k^2}{\alpha_k}\mathbb{E}[\alpha\alpha_k\|R(x_k)\|^2 + \alpha^2 \sigma^2].\label{eq: nhy7}
\end{align}
Let us set
\begin{equation*}
    C=32,~~~ b_k^2=\frac{\alpha_k}{120\alpha(k+1)},~~~v^2 = \frac{1}{60}
\end{equation*} 
and use the inductive assumption $\alpha\sum_{k=0}^{K-2}\alpha_k \mathbb{E}\|R(x_k)\|^2 \leq 32(\|x_0-x^\star\|^2 + \alpha^2\sigma^2)$ in \eqref{eq: nhy7} to obtain
\begin{align*}
    \frac{\alpha}{8} \sum_{k=0}^{K-1} \alpha_k \mathbb{E}\|R(x_k)\|^2 &\leq 4(\|x_0-x^\star\|^2 + \alpha^2 \sigma^2),
\end{align*}
which verifies $\alpha\sum_{k=0}^{K-1}\alpha_k \mathbb{E}\|R(x_k)\|^2 \leq 32(\|x_0-x^\star\|^2+\alpha^2\sigma^2)$.

For the base case,  we use $\alpha_0 = \frac{\alpha}{\sqrt{2}\log 3} < 1$ and $\alpha^{-1}$-star-Lipschitzness of $R=\id-J_{\eta(F+G)}$ to get $\alpha\alpha_0\|R(x_0)\|^2 \leq \|x_0-x^\star\|^2$.
This establishes the base case and completes the induction.

By using $\alpha_k\geq \alpha_K = \frac{\alpha}{\sqrt{K+2}\log(K+3)}$ in \eqref{eq: ind_result_mlmc} with $C=32$ and multiplying both sides by $\frac{1}{K\alpha_K}$ and using $\frac{\sqrt{K+2}}{K}\leq\frac{2}{\sqrt{K}}$ which is true for $K\geq 1$, we get the claimed rate result. Finally, in view of \Cref{cor: mlmc_bounds_avg}, and definitions of $b_k, v$, each iteration makes expected number of calls $O(\log^{2}(k+1))$. By using this expected cost of each iteration, we also get the final expected stochastic first-order complexity result.
\end{proof}

\section{Additional Remarks on Related Work} \label{sec: app_relwork}
There exist a line of works that attempted to construct local estimation of Lipschitz constants to offer an improved range for $\rho$ depending on the curvature \citep{pethick2022escaping,alacaoglu2023beyond}. However, these results cannot bring global improvements in the worst-case range of $\rho$ where the limit for $\rho$ is still $\frac{1}{2L}$. This is because it is easy to construct examples where the local Lipschitz constants are the same as the global Lipschitz constant.

The work \cite{hajizadeh2023linear} gets linear rate of convergence for interaction dominant problems which is shown to be closely related to cohypomonotonity, see \Cref{ex: interaction}. One important difference is that cohypomonotonicity is equivalent to $\alpha$ interaction dominance with $\alpha \geq 0$ whereas \cite{hajizadeh2023linear} requires $\alpha > 0$ for linear convergence. This is an important difference because \emph{(i)} we know that cohypomonotonicity relaxes monotonicity and \emph{(ii)} we know that even monotonicity is not sufficient for linear convergence. For monotone problems $O(\varepsilon^{-1})$ is the optimal first-order oracle complexity (see, e.g., \cite{yoon2021accelerated}) and hence it is also optimal with cohypomonotonicity.

In the literature for fixed point iterations, several works considered inexact Halpern or KM iterations without characterizing explicit first-order complexity results, see for example \cite{leucstean2021quantitative,bartz2022conical,kohlenbach2022proximal,combettes2004proximal}. In particular, \citet{bartz2022conical} used conic nonexpansiveness to analyze KM iteration. The dependence of the range of $\rho$ on $L$ arises when we start characterizing the first-order complexity. This is the reason these works have not been included in comparisons in Table \ref{table:1}.

For the stochastic cohypomonotone problems, the best complexity result to our knowledge is due to \cite{chen2022near}. This paper can obtain the optimal complexity $\widetilde{O}(\varepsilon^{-2})$ with cohypomonotone stochastic problems with a 6-loop algorithm using many carefully designed regularization techniques, extending the work of \citet{allen2018make} that focused on minimization. Some disadvantages of this approach compared to ours: \emph{(i)} the bound for cohypomonotonicity is $\rho \leq\frac{1}{2L}$; \emph{(ii)} the algorithm needs estimates of variance upper bound $\sigma^2$ and, more importantly, $\| x^0 - x^\star\|^2$; \emph{(iii)} the result is only given for unconstrained problems, which also makes it difficult to assume a bounded domain since there is no guarantee a priori for the iterates to stay bounded for an unconstrained problem. Given that the 6-loop algorithm and analysis of \cite{chen2022near} is rather complicated, it is not clear to us if their arguments generalize to constraints or if the other drawbacks can be alleviated.

The work \cite{tran2023randomized} focused on problems with $\rho$-weakly MVI solutions for $\rho < \frac{1}{8L}$ and derived $O(\varepsilon^{-2})$ for a randomized coordinate algorithm. Due to randomization, the complexity result in this work holds for the expectation of the optimality measure. Because of the coordinatewise updates, the problem focused in this work is deterministic, similar to the  setup in \Cref{sec: weak_mvi}. \citet{bravo2024stochastic_halpern} studied stochastic inexact Halpern iteration in normed spaces and obtained complexity $O(\varepsilon^{-5})$ for finding fixed-points, by using an oracle providing unbiased samples of a nonexpansive map.

\subsection{Clarifications about \Cref{table:2}}\label{sec: table_clarif}
Since the complexity results have not been written explicitly in some of the references, we provide details on how we computed the complexities that we report for the existing works. 

\cite{choudhury2023singlecall}: We use Theorem 4.5 in this corresponding paper to see that squared operator norm is upper bounded by $O(K^{-1})$. To make the operator norm smaller than $\varepsilon$, the order of $K$ is $\varepsilon^{-2}$. The batch-size has order $K$ and hence the total number of oracle calls is $O(K^2) = O(\varepsilon^{-4})$.

\cite{bohm2022two}: We use Theorem 3.3 in this corresponding paper. The paper stated that to make the squared operator norm smaller than $\varepsilon$, number of iterations is $O(\varepsilon^{-2})$ and the batch size is $O(\varepsilon^{-3})$. This gives complexity $O(\varepsilon^{-3})$ for making the \emph{squared} operator norm smaller than $\varepsilon$. Hence, to make the operator norm smaller than $\varepsilon$, the complexity is $O(\varepsilon^{-6})$.

\citep{pethick2023stable}: \emph{(i)} For ``best rate'' result, we use Corollary E.3(i) in this corresponding paper. The dominant term in the bound for the squared residual is $O(K^{-1})$. Hence to make the  norm of the residual smaller than $\varepsilon$ (equivalently, the squared norm smaller than $\varepsilon^{-2}$), one needs $K$ to be of the order $\varepsilon^{-2}$. Then, the squared variance is assumed to decrease at the order of $k^2$ which requires the batch size at iteration $k$ to be $k^2$. Then the complexity is upper bounded by $\sum_{k=1}^K \tau k^2 = \widetilde{O}(K^3) =\widetilde{O}(\varepsilon^{-6})$, \emph{(ii)} for the ``last iterate'', we use the Corollary E.3(ii) given in the paper to see that the dominant term in the bound of the squared residual is $O\left(\frac{1}{\sqrt{K}}\right)$. To make the squared residual smaller than $\varepsilon^2$, this means $K$ is of the order $\varepsilon^{-4}$. The squared variance is assumed to decrease at the rate $k^3$ which requires a batch size of $k^3$ at iteration $k$. Then, with the same calculation as before, the complexity of stochastic first-order oracles to make the residual less than $\varepsilon$ is $\sum_{k=1}^K \tau k^3 = \widetilde{O}(K^4) = \widetilde{O}(\varepsilon^{-16})$.

 \end{document}